\documentclass[11pt, a4paper, reqno, oneside]{amsart}
\usepackage{vmargin,color}
\usepackage[latin1]{inputenc}
\usepackage{amsmath,amsfonts,amsthm,epsfig,graphicx}
\usepackage{enumitem}
\usepackage{caption}
\usepackage{ esint }
\usepackage[none]{hyphenat}
\usepackage{titlesec}
\usepackage{comment}
\titleformat{\section}[block]
  {\centering\Large\bfseries}   
  {\thesection.}                
  {1em}                         
  {}                            

\titleformat{\subsection}[block]
  {\centering\bfseries}
  {\thesubsection.}
  {1em}
  {}

\titlespacing*{\subsection}{0pt}{3.5ex plus 1ex minus .2ex}{1em}

\def\dist{{\rm dist}}

\newtheorem{theorem}{Theorem}[section]
\newtheorem{definition}[theorem]{Definition}

\newtheorem{proposition}[theorem]{Proposition}
\newtheorem{lemma}[theorem]{Lemma}
\newtheorem{corollary}[theorem]{Corollary}

\newtheorem{assumption}[theorem]{Assumption}
\theoremstyle{remark}
\newtheorem{remark}{Remark}[section]

\numberwithin{equation}{section}
\numberwithin{figure}{section}

\address{Department of Mathematics, The University of Texas at Austin, 2515 Speedway, Stop C1200, Austin TX 78712-1202, USA}
\email{antoniofarah@utexas.edu}

\title{A Degenerate One-Phase Free Boundary Problem Arising From the Alt-Phillips Equation for Negative Powers}
\author{Antonio Farah}
\begin{document}

\begin{abstract}
We study viscosity solutions for a class of degenerate one-phase free boundary problems of the form $w\Delta w = h(\nabla w)$. We assume the existence of a star-shaped domain $D$ such that $h < 0$ in $D$, $h = 0$ on $\partial D$, and $h > 0$ in $\bar{D}^{c}$. This class of degenerate one-phase free boundary problems arises when a canonical transformation is performed to a semilinear equation $\Delta u = f(u)$, and $f$ behaves like $-\gamma u^{-(\gamma + 1)}$ for some $\gamma \in (0,2)$. In this case, known as the Alt-Phillips equation for negative power potentials, $h(\rho) = c(|\rho|^2 - 1)$. We show existence of a viscosity solution, Lipschitz regularity, and regularity of the free boundary at flat points. Additionally, we show that as $\gamma$ converges to $2$, the free boundary converges to a minimal surface.  
\end{abstract}
\maketitle

\section{Introduction}
In this work, we study non-negative viscosity solutions to a general class of free boundary problems of the type
\begin{align}\label{eq main problem}
\Delta w &= \frac{h(\nabla w)}{w} &&  \text{ in } B_1^{+} = B_1 \cap \{w > 0\}, \\ \nonumber
\partial^{-}_{\nu \nu} &w = 0 && \text{on }FB(w):= \partial \{w > 0\}. \nonumber
\end{align}
We assume that the right-hand side $h$ of the interior equation 
\begin{equation}\label{eq interior}
\Delta w = \frac{h(\nabla w)}{w}
\end{equation}
is a $C^{1}(\mathbb{R}^n)$ function that vanishes on the boundary of a star-shaped domain $D$, takes negative values inside $D$, and takes positive values in $\bar{D}^{c}$. This class of nonlinearities occurs when studying critical points of the Alt-Phillips functional for negative power potentials, which were recently studied by De Silva and Savin in \cite{de2022alt}. In this particular case,
\begin{equation*}
h(\rho) = c_{\gamma}(|\rho|^2 - 1),
\end{equation*}
$D = B_1$, and $c_{\gamma} > 0$ depends on the particular power $\gamma \in (-2,0)$.

The interesting feature of the problem is the non-standard free boundary condition 
\begin{equation}\label{eq free boundary}
\partial_{\nu\nu}^{-} w = 0,
\end{equation}
which is to be understood as a condition on the degree of the second term in the expansion of the solution $w$ at a free boundary point. Specifically, the condition means that when $w$ is expanded in terms of the distance $d$ to the free boundary, the next term in the expansion after a linear term in $d$ must be $o(d^{1 + \mu})$ for any $\mu \in (0,1)$. This is in contrast to the situation of Alt-Phillips free boundary problems with non-negative power potentials, in which the free boundary condition is imposed on the first term in the expansion. This class of problems includes the classical Alt-Caffarelli problem and the obstacle problem.   

An important observation in the study of these problems is that both the interior equation \eqref{eq interior} and the free boundary condition \eqref{eq free boundary} remain invariant with respect to the Lipschitz rescalings 
\begin{equation}\label{eq rescaling}
w_r(x) = \frac{w(rx)}{r}.
\end{equation}

Before giving the precise assumptions and definitions about the nonlinearity $h$ and the free boundary condition \eqref{eq free boundary}, we provide some context regarding this class of free boundary problems.  




\subsection{Motivation for the problem}

In the theory of semilinear equations, there is a well-known criterion (see \cite{pucci2007maximum}) to determine whether a solution can develop a free boundary or not. Namely, given a non-negative solution $u$ to a semilinear equation $\Delta u = g(u)$ in a bounded domain $\Omega \subset \mathbb{R}^n$ with $g \geq 0$ being an increasing function, we define 
\begin{equation}\label{eq Pucci}
w = \int_{0} ^ {u} \frac{1}{\sqrt{2G(s)}}ds,
\end{equation}
where 
\begin{equation*}
G(s) = \int_{0} ^ {s} g(t) dt.    
\end{equation*}
Then, if $w = \infty$, the problem admits a strong maximum principle, and thus the sets $\{u > 0\}$ and $\{u = 0\}$ cannot both be non-empty. But if $w < \infty$, then a non-trivial free boundary $\partial \{u > 0\}$ may form under appropriate boundary conditions. In this case, $w$ solves the degenerate one-phase problem 
\begin{align}\label{eq semilinear to onephase}
        \Delta w(x) = a(x)\Bigg(\frac{1-|\nabla w(x)|^2}{w(x)}\Bigg) \quad \mbox{in $\{w >0\}$}, \\ \nonumber
        |\nabla w(x)|^2=1 \hspace{3.1cm} \mbox{on $FB(w)$}, \nonumber
\end{align}
where 
\begin{equation*}
a(x) =  \frac{g(u(x)) w(u(x))}{2\sqrt{G(u(x))}}.    
\end{equation*}
The usefulness of \eqref{eq semilinear to onephase} is given by the fact that the equation now enjoys a transmission condition on the free boundary. To illustrate this, we discuss the Alt-Phillips problem for positive power potentials, which serves as the prototypical case for other semilinear equations. 

The Alt-Phillips problem for positive power potentials consists of minimizing the energy functional 

\begin{equation}\label{eq Alt-Phillips positive powers functional}
AP(v) = \int_{\Omega} \frac{|\nabla v|^2}{2} + (v)^{\gamma}_{+},    
\end{equation}
in a domain $\Omega$ among all non-negative $W^{1,2}(\Omega)$ functions that share a fixed boundary datum, and where $\gamma \in (0,2)$. This model was first studied in a series of works by Phillips (\cite{phillips1983hausdoff}, \cite{phillips1983minimization}) and then by Alt and Phillips in \cite{phillips1986free}. An interesting feature of the Alt-Phillips energy functional is that it interpolates between the obstacle problem and the Alt-Caffarelli problem, since
\begin{align}
\int_{\Omega} (v)^{\gamma}_{+} \rightarrow \int_{\Omega} (v)_{+} && \text{ as } \gamma \rightarrow 1 \nonumber \\
\int_{\Omega} (v)_{+}^{\gamma} \rightarrow |v > 0| && \text{ as } \gamma \rightarrow 0. \nonumber 
\end{align}
Critical points then satisfy the equation
\begin{align}\label{eq Alt-Phillips}
\Delta u &= \gamma u^{\gamma - 1} && \text{ in } \Omega^{+} \\
u &= 0 && \text{ in } \Omega \setminus \Omega^{+}. \nonumber
\end{align}
These equations encompass a wide range of semilinear equations, and the right-side of many semilinear equations oscillates between distinct powers of Alt-Phillips (see \cite{allen2025free} and \cite{caffarelli2025grad}). 

The Alt-Phillips equations for positive power potentials mark the first known instance where the transformation \eqref{eq Pucci} was used. In an Alt-Phillips equation for positive power potentials, such transformation corresponds to setting 
\begin{equation}\label{eq Alt-Phillips w}
w = u^{\frac{1}{\beta}},
\end{equation}
where $\beta = \frac{2}{2-\gamma}$. Then, a critical point $w$ now solves
\begin{align}\label{eq w for Alt Phillips}
&\Delta w = \frac{h(\nabla w)}{w} && \text{ in }  \Omega^{+},  \\ \nonumber 
&|\nabla w|^2 = 1 && \text{ on } FB(w), \nonumber 
\end{align}
where 
\begin{equation*}
h(\rho) = \frac{\gamma}{1 - \gamma}(1 - |\rho|^2).
\end{equation*}
The usefulness of the transformation can best be seen in the works of Phillips \cite{phillips1983hausdoff} and \cite{phillips1986free}. In such work, the authors use the fact that $|\nabla w|^2 \rightarrow 1$ as the free boundary is approached to derive measure-theoretic estimates for the free boundary; they do so by establishing an asymptotic rate of convergence, which enables them to deduce integrability bounds for $w^{-1}$ near the free boundary. Additionally, we remark that the free boundary condition $|\nabla w|^2 = 1$ does not apply to all solutions of the Alt-Phillips equation \eqref{eq Alt-Phillips}, but rather applies to critical points of the energy functional \eqref{eq Alt-Phillips positive powers functional}.

Motivated by \eqref{eq w for Alt Phillips}, De Silva and Savin in \cite{de2021certain} studied properties of solutions to degenerate one-phase problems of the form 
\begin{align}\label{eq De Silva Savin}
&\Delta w = \frac{h(\nabla w)}{w} && \text{ in } \Omega^{+} \\\nonumber 
&\nabla w \in \partial D && \text{ on } FB(w), \\ \nonumber 
\end{align}
with the nonlinearity $h$ satisfying the sign assumptions
\begin{equation*}
h > 0 \qquad \text{ in } D, \qquad h = 0 \qquad \text{ on } \partial D, \qquad h < 0 \qquad \text{ in } \bar{D}^{c},
\end{equation*}
\noindent
where $D$ is a star-shaped domain.
We note that, in such work, the authors study viscosity solutions to \eqref{eq De Silva Savin}, which do not necessarily arise as a transformation of a semilinear equation. Strictly using non-variational techniques, they show an optimal Lipschitz regularity bound, measure theoretic estimates for the free boundary, and an improvement of flatness argument near regular free boundary points. Importantly, the free boundary condition $\nabla w \in \partial D$, which is to be understood in the viscosity sense as well, is both a necessary and sufficient condition needed to obtain all the results detailed in the work.

Recently, the Alt-Phillips equations for negative power potentials have attracted some attention in the literature, as they are a natural extension of the Alt-Phillips model for non-negative power potentials. Specifically, the former equations arise from studying critical points of the energy functional 

\begin{equation}\label{eq Alt-Phillips negative powers functional}
AP(v) = \int_{\Omega} \frac{|\nabla v|^2}{2} + (v)^{-\gamma}_{+} \qquad \gamma \in (0,2)   
\end{equation}
\noindent
among all non-negative $W^{1,2}(\Omega)$ functions that satisfy the same boundary datum. Critical points then solve the interior equation
\begin{equation}\label{eq Alt-Phillips Negative Powers}
\Delta u = -\gamma u^{-(\gamma + 1)} \qquad \text{ in } \Omega^{+},
\end{equation}
along with the free boundary condition
\begin{equation}\label{eq free boundary negative exponents}
u(x_0 + t\nu) = c_0t^{\alpha} + o(t^{2-\alpha}) \qquad \text{ on $FB(u)$}
\end{equation}
\noindent
for $\alpha = \frac{2}{2 + \gamma}$. In particular, \eqref{eq free boundary negative exponents} means that given a free boundary point $x_0 \in FB(u)$ and a ball $B$ such that $x_0 \in \partial B$, $u$ cannot be touched from below (resp. above) by a function of the form

\begin{equation}\label{eq free boundary negative exponents explanation}
c_0d(x)^{\alpha} \pm \epsilon d(x)^{2 - \alpha},  
\end{equation}
for any $\epsilon > 0$, where
\begin{equation*}
c_0 = [\alpha(1-\alpha)]^{-\frac{1}{\gamma + 2}},
\end{equation*}
and 
\begin{align*}
d(x) = 
\begin{cases}
dist(x, \partial B) \qquad & \text{ for } x \in B \\
0 \qquad & \text{ otherwise}.
\end{cases}
\end{align*}
\noindent
In fact, \eqref{eq free boundary negative exponents} is a predecessor to our free boundary condition \eqref{eq free boundary}. 

These problems were studied, for example, by De Silva and Savin in \cite{de2022alt}, \cite{de2022uniform}, and \cite{de2023compactness}. Aside from the optimal Lipschitz regularity and improvement of flatness bounds, a very interesting feature of the problems is that they have a connection to the minimal surfaces equation as $\gamma \rightarrow 2$. In \cite{de2022alt}, it was shown that the energy functionals \eqref{eq Alt-Phillips Negative Powers} $\Gamma$-converge to $\int_{\Omega}Per\{u > 0\}$ as $\gamma \rightarrow 2$; the connection was further explored in \cite{de2022uniform} and \cite{de2023compactness}. 

When the transformation \eqref{eq semilinear to onephase} is applied to a minimizer of the Alt-Phillips equation for negative power potentials, the transformed function satisfies the free boundary problem
\begin{align}\label{eq w for Alt Phillips negative}
&\Delta w = \frac{h(\nabla w)}{w} && \text{ in } \Omega \cap \{w > 0\} := \Omega^{+} \nonumber \\
&\partial_{\nu\nu}^{-} w  = 0 && \text{ on } \partial \{w > 0\} \cap \Omega, \nonumber \\
\end{align}
\noindent
where 
\begin{equation*}
h(\rho) = \frac{\gamma}{2 + \gamma}(1 - |\rho|^2).
\end{equation*}
\noindent
Our free boundary problem \eqref{eq main problem} can then be seen as a generalization of \eqref{eq w for Alt Phillips negative}. We now precise the exact nature of the right-hand side $h$ and the free boundary condition \eqref{eq free boundary} for the free boundary problem \eqref{eq main problem}. 
\begin{assumption}[Standard Assumptions of the Problem]\label{assumption standard assumption}
Let $D$ be a bounded star-shaped domain defined by a $C^2$ function $f: \mathbb{S}^{n-1} \rightarrow \mathbb{R}$, referred to as the domain function, given by the property that
\begin{equation}\label{eq f definition}
f(\nu) \nu \in \partial D
\end{equation}
for all $\nu \in \mathbb{S}^{n-1}$. Then, we stipulate that the nonlinearity $h \in C^{1}(\mathbb{R}^n)$ be such that
\begin{equation*}
h < 0 \qquad \text{ in } D, \qquad h = 0 \qquad \text{ on } \partial D, \qquad h > 0 \qquad \text{ in } \bar{D}^{c}.
\end{equation*}
Furthermore, we assume the following bound on the radial derivative of $h$ on $\partial D$
\begin{equation}\label{eq mu condition}
\nabla h(f(\nu)\nu) \cdot \nu < f(\nu)    
\end{equation}
for all $\nu \in \mathbb{S}^{n-1}$. 
\end{assumption}

\begin{definition}[Free Boundary Condition]\label{def free boundary}
Let $x_0 \in \partial \{w > 0\} \cap \Omega$, and let us suppose that $\Gamma$ is an oriented $C^3$ hypersurface with normal $\nu$ such that $x_0 \in \Gamma$. Then, we let $d(x)$ be the distance to $\Gamma$ on the side of $\Gamma$ to which $\nu$ is interior, 
so that $\nu(x) = \nabla d(x)$. We say that $w$ satisfies the supersolution free boundary condition $\partial_{\nu \nu}^{-}w(x_0) \leq 0$ at $x_0$ if $w$ cannot be touched from below by a function of the form
\begin{equation}\label{eq free boundary condition supersol}
u(x) = f(\nu(x))d(x) + \epsilon d(x)^{1 + \mu},    
\end{equation}
for any  $\epsilon > 0$ and $\mu \in (0, 1)$, where $f$ is defined in \eqref{eq f definition} in Assumption \ref{assumption standard assumption}. 

Similarly, we say that $w$ satisfies the subsolution free boundary condition $\partial_{\nu \nu}^{-}w(x_0) \geq 0$ at $x_0$ if $w$ cannot be touched by above by a function of the form
\begin{equation}\label{eq free boundary condition subsol}
u(x) = f(\nu(x))d(x) - \epsilon d(x)^{1 + \mu}.   
\end{equation}
We say that $\partial_{\nu \nu}^{-} w(x_0) = 0$ if both $\partial_{\nu \nu}^{-}w(x_0) \leq 0$ and $\partial_{\nu \nu}^{-}w(x_0) \geq 0$.

Finally, we say that
\begin{equation*}
\partial_{\nu \nu}^{-} w = 0 \qquad \text{in }FB(w)
\end{equation*}
if $\partial_{\nu \nu}^{-}w(x) = 0$ for all $x \in FB(w)$.
\end{definition}
Hence, the $w$ corresponding to the Alt-Phillips equation for negative exponents, defined in \eqref{eq w for Alt Phillips negative}, solves a free boundary problem of the form \eqref{eq main problem}, with $D$ being the unit sphere. This work can be seen as the analog of \cite{de2021certain} for an Alt-Phillips equation with negative power potentials. Additionally, the connection with minimal surfaces generalizes to this case as well, which will be studied in the last section of this work.

However, there are important differences between our free boundary problem \eqref{eq main problem} and the generalization of the Alt-Phillips equation for positive power potentials \eqref{eq De Silva Savin} studied in \cite{de2021certain}. The latter work considers an equation that is superharmonic when the gradient is large. This condition by itself already implies that the solution should enjoy some regularity; solutions to equations of this form were shown to have universal integrability and regularity properties by Imbert and Silvestre in \cite{imbert2016estimates}. However, an equation that is subharmonic when the gradient is large does not necessarily necessarily satisfy such estimates; as a result, stronger conditions on the right-hand side must be assumed to show regularity estimates. For example, while a  family of solutions to the interior equation appearing in the free boundary problem \eqref{eq De Silva Savin} satisfies a Harnack inequality when we assume the existence of a constant $C_h > 0$ such that $h(\rho) \geq -C_h|\rho|^2$, a family of solutions to the interior equation \eqref{eq interior} will only satisfy a Harnack inequality when $h(\rho) \leq C_h|\rho|^2$ and $C_h \in (0,1)$. A counterexample when $C_h = 1$ is given in Section 2. Additionally, the interior equation \eqref{eq interior} has the property that a strictly positive solution can always be found for it, even in the setting where the boundary data is zero. This is in contrast to solutions to the interior equation appearing in the free boundary problem \eqref{eq De Silva Savin}; a strictly positive solution to such interior equation cannot be guaranteed for all non-negative boundary data. A simple observation that suggests as much is the existence of the so-called ``bump" subsolution $c(1 - |x|^2)$ for a small constant $c > 0$ to the interior equation \eqref{eq interior}, which is strictly positive in $B_1$ and zero on $\partial B_1$. In conclusion, the regularity and existence treatment for \eqref{eq main problem} is delicate and heavily relies on the assumptions placed on the right-hand side $h$ and on the free boundary condition \eqref{eq free boundary}.

\subsection{The Free Boundary Condition}\label{sub free boundary} 
\vspace{1 em}
We provide the motivation for the free boundary condition \eqref{eq free boundary}. It is inspired by the free boundary condition for critical points of the Alt-Phillips functional for negative power potentials \eqref{eq Alt-Phillips negative powers functional}.


Turning to the one dimensional setting, let us suppose we wish to solve:
\begin{equation*}
w'' = \frac{h(w')}{w},
\end{equation*}
where $h \in C^{1}$ satisfies $h < 0$ on $(-1,1)$, $h = 0$ on $\{-1, 1\}$, and $h > 0$ elsewhere. Solutions to this ODE have the implicit form
\begin{equation*}
w = \int_{C_1} ^ {w'(t)} \frac{h(s)}{s} + C_2
\end{equation*}
for some constants $C_1, C_2$. Then, we can use the fact that
\begin{equation*}
h(s) \sim h'(1)(s - 1)
\end{equation*}
for $s$ sufficiently close to $1$. Thus, near the free boundary, solutions satisfy the ODE

\begin{equation*}
w' = 1 + C_3w^{\mu},
\end{equation*}
where $\mu = h'(1)$. Solving this then yields

\begin{equation*}
w(x) = x \pm s x^{1 + \mu} + o(x^{1 + \mu}).
\end{equation*}
Consequently, if we stipulate that $h'(1) < 1$, $x^{1 + \mu}$ is not twice differentiable at the origin. We can then impose the free boundary condition that $s = 0$, which is a rudimentary version of the free boundary condition \eqref{eq free boundary}. 

With the one-dimensional example as motivation, we now turn to multiple dimensions. Let $\Gamma$ be a smooth surface enclosing a region, let $d(x) = \dist(x, \Gamma)$ on one side of $\Gamma$ and zero elsewhere, and let $\nu(x) = \nabla d(x)$. We then set, as in the one dimensional case,  
\begin{equation}\label{eq sub and super solutions}
u(x) = f(\nu(x))d(x) + sd(x)^{1 + \mu},
\end{equation}
and we look for a condition on $\mu$ so that $u$ is a subsolution when $s > 0$ and a supersolution when $s < 0$ in a neighborhood of $\Gamma$. It then follows that 
\begin{align*}
&\nabla u = f(\nu(x))\nu(x) + s(1 + \mu)d(x)^{\mu}\nu(x) + o\big(d(x)\big) \\
&\Delta u = \kappa(x) f(\nu(x)) + s\mu(1 + \mu)d(x)^{\mu - 1} + o\big(d(x)^{\mu}\big) \\
&u \Delta u = sf(\nu(x)) \mu(1 + \mu)d(x)^{\mu} + \kappa(x)f^2(\nu(x))d(x) + o\big(d(x)^{\mu}\big) \\
&h(\nabla u) = \big(\nabla h(f(\nu(x)) \nu(x))\cdot \nu(x)\big)s(1 + \mu)d(x)^{\mu} + o\big(d(x)^{\mu}\big),
\end{align*}
where $\kappa(x)$ denotes the mean curvature of $\Gamma$ at $x$. Hence, if we set
\begin{equation}\label{eq mu full condition}
1 > \mu > \sup_{|\nu| = 1}\frac{\nabla h\big(f(\nu)\nu\big) \cdot \nu}{f(\nu)},     
\end{equation}
then it will be the case that $u$ is a subsolution when $s > 0$ and a supersolution when $s < 0$ for small values of $d(x)$. We summarize this in the following lemma.
\begin{lemma}\label{Lemma Test Functions}
Let $\Gamma$ be a $C^{3}$ hypersurface. Set $d(x)$ to be the distance to $\Gamma$ on one side of $\Gamma$ and zero elsewhere, and let $\nu(x) = \nabla d(x)$. For $s > 0$ and $\mu$ as in \eqref{eq mu full condition}, the test functions
\begin{equation*}
u(x) = f(\nu(x))d(x) \pm sd(x)^{1 + \mu}
\end{equation*}
are subsolutions ($+$) or supersolutions ($-$) for the interior equation \eqref{eq interior} as long as 
\begin{equation*}
d(x) \leq c(s, \mu, \max \kappa),   
\end{equation*}
where $\max \kappa$ denotes the highest principal curvature of $\Gamma$.
\end{lemma}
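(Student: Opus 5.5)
Since $u>0$ in the region where $d>0$, proving that $u$ is a subsolution of \eqref{eq interior} there amounts to showing
\begin{equation*}
\mathcal{L}(u):=u\Delta u - h(\nabla u) \geq 0,
\end{equation*}
and for a supersolution the reverse inequality. I will compute $\mathcal{L}(u)$ in a tubular neighborhood of $\Gamma$ where $d$ is $C^2$. By $C^3$ regularity of $\Gamma$, such a neighborhood exists with width comparable to $1/\max\kappa$. There $|\nabla d|=1$, $\nu=\nabla d$ is $C^1$ with $D\nu = D^2 d$, and $\Delta d = \kappa(x)$ is bounded in terms of $\max\kappa$. A key point is that $\nu$ is constant along normal lines, so $D\nu\,\nu=0$.

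\textbf{Step 1: gradient and Laplacian.}
\begin{itemize}
\item \emph{Gradient.} We have
\begin{equation*}
\nabla u = f(\nu)\nu + d\,(D\nu)^T\nabla f(\nu) \pm s(1+\mu)d^\mu\nu.
\end{equation*}
The middle term is $O(d\max\kappa)$ because $f\in C^2$.
\item \emph{Laplacian.} Here
\begin{equation*}
\Delta u = \operatorname{div}\bigl(f(\nu)\nu\bigr)+\operatorname{div}\bigl(d\,(D\nu)^T\nabla f\bigr) \pm s\mu(1+\mu)d^{\mu-1} \pm s(1+\mu)d^\mu\kappa.
\end{equation*}
The first two terms are bounded by $C(\max\kappa)$. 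This uses $f\in C^2$ together with $D^2\nu$, which is available since $\Gamma\in C^3$ makes $d\in C^3$ near $\Gamma$. It is exactly why $C^3$ is assumed.
\item \emph{Product.} Multiplying by $u = f(\nu)d \pm s d^{1+\mu}$ gives
\begin{equation*}
u\Delta u = \pm s\mu(1+\mu)f(\nu)d^\mu + O(d) + O(s^2 d^{2\mu}).
\end{equation*}
\end{itemize}

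\textbf{Step 2: expansion of $h(\nabla u)$.} By Assumption \ref{assumption standard assumption}, $h(f(\nu)\nu)=0$ and $h\in C^1$. Then
\begin{equation*}
h(\nabla u) = \nabla h(f(\nu)\nu)\cdot\bigl(\pm s(1+\mu)d^\mu\nu + O(d)\bigr) + o\bigl(s d^\mu + d\bigr).
\end{equation*}
The little-$o$ comes from the modulus of continuity of $\nabla h$ on a compact neighborhood of $\partial D$.

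\textbf{Step 3: combine.} Subtracting the two expansions,
\begin{equation*}
\mathcal{L}(u) = \pm s(1+\mu)d^\mu\Bigl[\mu f(\nu) - \nabla h(f(\nu)\nu)\cdot\nu\Bigr] + O(d) + O(s^2d^{2\mu}) + o(sd^\mu).
\end{equation*}
By \eqref{eq mu full condition} and compactness of $\mathbb{S}^{n-1}$, the bracket is at least $\delta f(\nu)\ge \delta\min f>0$, where
\begin{equation*}
\delta := \mu - \sup_{|\nu|=1}\frac{\nabla h\bigl(f(\nu)\nu\bigr)\cdot\nu}{f(\nu)}>0.
\end{equation*}
So the leading term has the sign $\pm$ and magnitude at least $c\,s\,d^\mu$. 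The errors are dominated as follows:
\begin{itemize}
\item $O(d)$ is dominated by $s d^\mu$ because $\mu<1$. This is precisely where the upper bound $\mu<1$ enters, and it requires $d^{1-\mu}\le c s$.
\item $O(s^2 d^{2\mu})$ is dominated once $s d^\mu$ is small.
\item The $o(\cdot)$ terms are absorbed for $d$ small.
\end{itemize}
Choosing $d\le c(s,\mu,\max\kappa)$, with the constant also depending on $f$, $h$ and $\delta$, gives the sign of $\mathcal{L}(u)$. This yields a subsolution for $+$ and a supersolution for $-$ in the region $\{0<d\le c\}$.

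\textbf{Main obstacle.} The hardest step is the careful bookkeeping of the $O(d)$ errors coming from the variation of $f(\nu(x))$. These contribute to $\Delta u$ at order $O(1)$ rather than $O(d^{\mu-1})$, and to $u\Delta u$ at order $O(d)$. One must verify that none of them is of order $d^{\mu}$ or larger. This relies on $D\nu\,\nu=0$, so that the perturbation of $\nabla u$ from $f(\nu)\nu$ is $O(d)$, and on the bounded-curvature estimates for $D^2 d$ and $D^3 d$ in the tubular neighborhood.
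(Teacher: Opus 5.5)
Your proposal is correct and follows the paper's argument. The paper (Subsection~\ref{sub free boundary}) likewise expands $\nabla u$, $\Delta u$, $u\Delta u$ and $h(\nabla u)$ in powers of $d$, isolates the leading term $\pm s(1+\mu)d^{\mu}\bigl[\mu f(\nu)-\nabla h(f(\nu)\nu)\cdot\nu\bigr]$, and uses $\mu<1$ to absorb the $O(d)$ curvature contributions such as $\kappa f^{2}d$. Your bookkeeping is more careful than the paper's: the tangential term $d\,D^{2}d\,\nabla f(\nu)$ in $\nabla u$ is genuinely $O(d)$ rather than the paper's $o(d)$, and the bound involving $D^{3}d$ depends on the $C^{3}$ norm of $\Gamma$, not only on $\max\kappa$. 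Neither point affects the conclusion.
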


It is precisely for this reason that we must then impose \eqref{eq mu condition}, and we may then choose a $\mu$ that will satisfy \eqref{eq mu full condition}. This parallels the requirement that $h'(1) < 1$ in one dimension, and the free boundary condition is analogous to setting $s = 0$ in the one-dimensional expansion. 

An interesting case occurs when we have a sequence of solutions $\{w_k\}$ and associated right hand sides and domain functions $\{h_k\}, \{f_k\}$ satisfying Assumption \ref{assumption standard assumption} in which  
\begin{equation}\label{eq mu limiting case}
\frac{\nabla h_k(f_k(\nu)\nu) \cdot \nu}{f_k(\nu)} \rightarrow 1 
\end{equation}
for all $|\nu| = 1$. If $w_k \rightarrow w, h_k \rightarrow h, f_k \rightarrow f$, and $FB(w_k) \rightarrow FB(w)$ in an appropriate sense, then $w$ will solve an interior equation of the form \eqref{eq interior}. When looking at subsolutions and supersolutions of the form $u$ in \eqref{eq sub and super solutions}, the sign term
\begin{equation}\label{eq curvature term}
\kappa(x)f(\nu(x))d(x)
\end{equation}
now becomes important as the free boundary is approached. For example, if $\kappa(x) > 0$, then $u$ will be a subsolution near the point $x$ regardless of the value of $c_2$. Analogously, if $\kappa(x) < 0$, then $u$ will be a supersolution near $x$ regardless of the value of $c_2$. This then strongly suggests that the free boundary of $w$ must be a minimal surface in the viscosity sense. This result is motivated by the fact that \eqref{eq mu limiting case} represents sending $\gamma \rightarrow 2$ in the Alt-Phillips functional for negative power potentials \eqref{eq Alt-Phillips positive powers functional}. Hence, the free boundary of $w$ being a minimal surface in the viscosity sense is analogous to the $\Gamma$-convergence of the energy functionals to the Perimeter functional, as shown in \cite{de2022alt}.

\subsection{Main Results}
\vspace{1 em}

\textbf{Notational Conventions:} Throughout the work, $c$ and $C$ denote small and large constants, respectively, that depend only on the nonlinearity $h$ and the domain function $f$. Additional dependencies are listed in parentheses next to these constants. 

First, we give an existence and regularity result for strictly positive solutions to the interior equation in the entire unit ball

\begin{equation*}
\Delta w  = \frac{h(\nabla w)}{w} \qquad \text{ in } B_1. 
\end{equation*}

\begin{theorem}[Existence and regularity for the interior equation]\label{thm interior equation}
Let $w \in C(B_1)$ be a positive viscosity solution to the interior equation \eqref{eq interior} in $B_1$. Then, $w \in C^{2, \alpha}(B_1)$ for all $\alpha < 1$, along with the estimate 
\begin{equation*}
||w||_{C^{2,\alpha}(B_{\frac{1}{2}})} \leq C(\alpha)||w||_{L^{\infty}(B_1)}.
\end{equation*}
Moreover, 
\begin{equation*}
c \leq ||w||_{L^{\infty}(B_\frac{1}{2})}.  
\end{equation*}
Additionally, for any non-negative $\phi \in C(\partial B_1)$, there exists a solution to the interior equation \eqref{eq interior} that agrees with $\phi$ on $\partial B_1$. Further,

\begin{equation*}
||w||_{L^{\infty}(B_1)} \leq C(1 + ||\phi||_{L^{\infty}(\partial B_1)}).
\end{equation*}

\end{theorem}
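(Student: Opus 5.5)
The plan has three parts: a lower bound on $\sup w$, interior $C^{2,\alpha}$ regularity, and existence by Perron's method. They rest on two facts from Assumption \ref{assumption standard assumption}. First, $h$ is $C^1$ and $h<0$ on the bounded set $D$, so $h(\rho)\ge -C$ for all $\rho$. Second, $h>0$ outside $\bar D$; together with the growth needed later, this gives the one-sided structure of the equation.

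\emph{Lower bound.} I would argue by comparison with the bump function $\psi(x)=c(1-4|x|^2)$ on $B_{1/2}$. We have $\Delta\psi=-8nc$ and $\nabla\psi=O(c)$. For $c$ small, $\nabla\psi$ stays in a compact subset of $D$, so $h(\nabla\psi)\le -c_0<0$. Since $\psi\le c$, this gives $\psi\Delta\psi\ge -8nc^2>-c_0\ge h(\nabla\psi)$, so $\psi$ is a strict subsolution. Now suppose $\sup_{B_{1/2}}w<c$. Slide the family $t\psi$ upward from $t=0$. Because $w>0$ on $\partial B_{1/2}$ and $\psi=0$ there, a first touching occurs at an interior point where $w=t\psi>0$. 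At that point $w$ is touched from below by a strict subsolution, which contradicts the viscosity supersolution property. Hence $\|w\|_{L^\infty(B_{1/2})}\ge c$. I expect this to also require checking that the operator is monotone in the right direction in $w$: for fixed $\nabla w$ with $h<0$, the right-hand side $h/w$ increases as $w$ increases. The positivity of $w$ at the touching point makes the comparison legitimate.

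\emph{Regularity.} Where $w$ stays away from $0$, the equation is uniformly elliptic with a right-hand side $h(\nabla w)/w$, which is bounded once $\nabla w$ is bounded. Interior gradient bounds would follow from Bernstein-type or Ishii–Lions estimates for equations with at most quadratic gradient growth. I would then get $C^{1,\alpha}$ via Krylov–Safonov/Caffarelli, and finally $C^{2,\alpha}$ by Schauder theory, since $h(\nabla w)/w\in C^{\alpha}$. The subtle point is a quantitative lower bound for $w$ in $B_{1/2}$. For this I would use a Harnack inequality: the remark above that Harnack holds when $h(\rho)\le C_h|\rho|^2$ with $C_h<1$ suggests the transformation $v=w^{\beta}$. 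Choosing $\beta$ appropriately absorbs the $|\nabla w|^2/w$ term and turns the problem into one for a sub/supersolution of a uniformly elliptic equation with bounded right-hand side. This yields $\inf_{B_{3/4}} w\ge c\sup_{B_{3/4}}w - C$. Combined with the lower bound above, applied at every scale, this keeps $w\ge c'$ wherever the local sup is comparable to the global one. Scaling by $\|w\|_{L^\infty}$ then gives the stated estimate. \textbf{This step is the main obstacle.} The growth hypothesis on $h$ needed for Harnack is not explicitly in Assumption \ref{assumption standard assumption}, so I would first check whether it follows from $h\in C^1$ and the star-shaped structure of $D$, or else add it.

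\emph{Existence.} I would use Perron's method, taking the infimum of positive supersolutions lying above $\phi$ on $\partial B_1$. As a barrier from above, I would take $M+C(1-|x|^2)$: for large $M$ its gradient is bounded and $w\Delta w=-2nCw$, while $h(\nabla)\ge -\|h^-\|_\infty$, so choosing $C$ appropriately makes it a supersolution with $M\sim 1+\|\phi\|_\infty$. This gives the $L^\infty$ bound. Positivity of the Perron solution comes from the bump subsolution, and attainment of the boundary data from local barriers at boundary points.
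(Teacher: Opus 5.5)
The genuine gap is in the existence step. Perron's method gives a \emph{continuous} solution only if you can show that the infimum $w$ of the supersolution class is continuous. An infimum of continuous supersolutions is merely upper semicontinuous. You do get that $w_*$ is a supersolution and, by the bump argument, that $w=w^*$ is a subsolution, but concluding $w^*=w_*$ requires a comparison principle. There is none here. Written as $-\Delta w+h(\nabla w)/w=0$, the operator is \emph{decreasing} in $w$ whenever $\nabla w\notin\bar D$ (where $h>0$), so it is not proper. Accordingly, the paper proves uniqueness only under the extra hypothesis that $t\mapsto h(t\nu)$ is increasing. You raised monotonicity in $w$ in the bump step, where it is irrelevant because $w$ and the test function coincide at the contact point; it is in the Perron step that it actually fails. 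Local boundary barriers only control $w$ near $\partial B_1$, not its interior continuity. So, as written, your Perron function is only known to be a discontinuous viscosity solution.

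The paper's missing ingredient is to restrict the infimum to supersolutions with a uniform H\"older modulus. For strictly positive $C^\alpha$ data it replaces each $u_1\in\mathcal A$ by the inf-convolution $u_2(x)=\inf_{y}\,u_1(y)+C_0|x-y|^\beta$. Suppose a polynomial $P$ touches $u_2$ from below at $x_0=0$, with the infimum attained at $y_0=te_n\neq 0$. Then $|\nabla P(0)|=C_0\beta t^{\beta-1}$, which for $C_0$ large forces $\nabla P\notin\bar D$, so $h(\nabla P)>0$. Testing $u_1$ at $te_n$ with $P(y',0)-C_0|y_n|^\beta$, and using $h\le C_h|p|^2$, $u_1\ge\delta>0$, $C_0t^\beta\le\|u_1\|_{L^\infty}$ and $\beta$ small, gives $\Delta P<0$. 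Hence $u_2$ is again a supersolution. Boundary values are kept by the explicit barriers $\phi(x_0)+C|(x-x_0)\cdot\nu_{x_0}|^{\alpha/2}$ and $\max\{\phi(x_0)-C|(x-x_0)\cdot\nu_{x_0}|^{\beta},\delta\}$, and general $\phi\ge 0$ follows by approximation using the interior estimates.

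The rest of your plan matches the paper: the bump subsolution, and Harnack from superharmonicity of $w^p$ for $p\le 1-C_h$. This needs $h\le C_h|\rho|^2$ with $C_h<1$, which is indeed not implied by Assumption~\ref{assumption standard assumption}; the paper adds it as \eqref{eq assumption c_h}. Two corrections there. First, your sliding argument already yields $w\ge\psi$ pointwise, which is the lower bound on $\inf w$ you need; combining ``$\inf w\ge c\sup w-C$'' with ``$\sup w\ge c$'' yields nothing. Second, the paper obtains $C^{1,\alpha}$ from H\"older continuity plus Lemma~\ref{Lemma De Silva Savin} applied to $(w(\rho x)-w(0))/(\kappa\rho^\alpha)$, not from a Bernstein gradient bound, which would require classical solutions. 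Finally, your barrier $M+(1-|x|^2)$ is a strict supersolution once $2nM>\|h^-\|_{L^\infty}$, and sliding it gives $\|w\|_{L^\infty}\le C(1+\|\phi\|_{L^\infty})$ cleanly.
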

Further, we will give precise conditions on $h$ under which uniqueness can be guaranteed. Next, we discuss properties of solutions to the free boundary problem \eqref{eq main problem}. Our first result is the Lipschitz regularity of solutions, which is optimal. 
\begin{theorem}[Optimal regularity of the free boundary problem]\label{thm Lipschitz Regularity}
Let $w$ be a solution to \eqref{eq main problem} such that $0 \in FB(w)$. Then,
\begin{equation*}
||\nabla w||_{L^{\infty}(B_{\frac{1}{2}})} \leq C,
\end{equation*}
and $w \in C^{2,\alpha}(B_{1}^{+})$ for any $\alpha < 1$.
\end{theorem}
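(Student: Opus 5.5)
The argument has two stages. First, a linear growth bound away from the free boundary: $w(x)\le C\,\mathrm{dist}(x,FB(w))$ for $x\in B_{3/4}^+$. Second, interior regularity from Theorem \ref{thm interior equation} applied to Lipschitz rescalings \eqref{eq rescaling}, which turns the growth bound into a gradient bound. The Lipschitz rescalings preserve both the interior equation \eqref{eq interior} and the free boundary condition, so all estimates can be made at unit scale.

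Suppose the growth bound is known. Fix $x_0\in B_{1/2}^+$ and let $r=\mathrm{dist}(x_0,FB(w))$; since $0\in FB(w)$, we have $r\le 1/2$. The rescaling $w_r(y)=w(x_0+ry)/r$ is a positive solution of \eqref{eq interior} in $B_1$. The growth bound gives $\|w_r\|_{L^\infty(B_1)}\le C$, because points of $B_r(x_0)$ lie within $2r$ of $FB(w)$. Theorem \ref{thm interior equation} then gives $\|w_r\|_{C^{2,\alpha}(B_{1/2})}\le C(\alpha)$, and in particular $|\nabla w(x_0)|=|\nabla w_r(0)|\le C$. The same argument on balls compactly contained in $B_1^+$ shows $w\in C^{2,\alpha}(B_1^+)$, which is the second claim. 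Note that the lower bound $c\le\|w\|_{L^\infty(B_{1/2})}$ from Theorem \ref{thm interior equation} does not enter here.

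The main obstacle is the growth bound itself, since it is the only place where the free boundary condition $\partial^-_{\nu\nu}w\ge 0$ (the subsolution part) is used. I would argue by contradiction with a barrier. Suppose $x_0\in B^+_{3/4}$ with $r=\mathrm{dist}(x_0,FB(w))$ and $w(x_0)=Mr$ for $M$ large. Rescale so that $r=1$, $w(0)=M$, $w>0$ in $B_1$, and a point $z\in FB(w)\cap\partial B_1$.

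1. Propagate the largeness. A Harnack-type inequality gives $w\ge cM$ on $B_{1/2}$. I would get it from the rescaling $\tilde w=w/M$, which solves $\Delta\tilde w=h(M\nabla\tilde w)/(M^2\tilde w)$. If $h$ has at most quadratic growth with a good constant, the right-hand side is of the form $H(\nabla\tilde w)/\tilde w$ with $H(\rho)\le C_h|\rho|^2$, $C_h<1$ (the condition highlighted in the introduction), so Harnack applies. If $h$ grows faster I expect difficulty, and would replace this step by a comparison with an explicit radial solution.

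2. Build an upper barrier that $w$ cannot stay below. On the annulus $B_1\setminus \bar B_{1/2}$, take
\[
\phi(x)=f(\nu(x))\,d(x)-\epsilon\, d(x)^{1+\mu}+K\,\psi(x),
\]
where $d$ is the distance to $\partial B_1$ measured inward, and $\psi$ is a radial function vanishing on $\partial B_1$ and growing toward $\partial B_{1/2}$, with $K$ chosen so that $\phi<w$ on $\partial B_{1/2}$. By Lemma \ref{Lemma Test Functions}, the first two terms form a supersolution in a thin layer near $\partial B_1$; the extra term must be designed so that $\phi$ stays a supersolution on the whole annulus. Then translate or shrink the inner ball of the annulus until the barrier first touches $w$. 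If $\phi$ is a strict supersolution and the boundary data are ordered, the first contact cannot happen at an interior point of $\{w>0\}$, and it cannot happen on the inner boundary. So contact occurs at a free boundary point, where $w$ is touched from above by a function of the form \eqref{eq free boundary condition subsol}. This contradicts $\partial^-_{\nu\nu}w\ge 0$.

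The delicate part of step 2 is combining $K\psi$ with the supersolution property. I would try $\psi$ built from the bump $c(1-|x|^2)$ or an exponential radial profile. The sign structure helps here: $h>0$ for large gradients, so $h(\nabla\phi)/\phi$ is large and positive where $\phi$ is small and steep, which is what a supersolution needs.
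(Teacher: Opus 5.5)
The gap is in Step 2 of your growth bound: you use the wrong half of the free boundary condition, and the barrier points the wrong way.

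\textbf{What already works.} Your second stage (Lipschitz rescaling plus the interior $C^{2,\alpha}$ estimate of Theorem \ref{thm interior equation}) is how the paper concludes. Your Step 1 is also fine: assumption \eqref{eq assumption c_h} is in force throughout Section 2, and Lemma \ref{Lemma Harnack Inequality} with $\sigma=0$ directly gives $\inf_{B_{1/2}} w \ge M/C-1$.

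\textbf{Why Step 2 fails.} The growth bound is an \emph{upper} bound on $w$. The half of \eqref{eq free boundary} that stops $w$ from being too large near $FB(w)$ is the supersolution condition $\partial^-_{\nu\nu}w\le 0$: no touching from below by $f(\nu)d+\epsilon d^{1+\mu}$. The condition you invoke, $\partial^-_{\nu\nu}w\ge 0$, only stops $w$ from being too small there, so it cannot give an upper bound.

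Your construction is also internally inconsistent. The ordering $\phi<w$ on $\partial B_{1/2}$ and $\phi=0\le w$ on $\partial B_1$ is the boundary ordering of a \emph{lower} barrier. Yet you want a supersolution whose first contact with $w$ is from above. A supersolution touching a solution from below at an interior point gives no contradiction, so interior contact is not ruled out.

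No choice of $\psi$ can rescue the upward orientation. To contradict $\partial^-_{\nu\nu}w\ge 0$, the touching function must equal $f(\nu)d-\epsilon d^{1+\mu}$ near the contact point, so its normal slope there is below $f$. Already in one dimension, a supersolution with $0<\phi'<f$ satisfies $\phi''\le h(\phi')/\phi<0$. Its slope therefore stays below $f$, so it never exceeds $fd$, let alone $cM$. On the other hand, any $K\psi$ with nonzero slope at $\partial B_1$ destroys the required form at the contact point.

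\textbf{The fix.} The paper's argument (Proposition \ref{prop optimal growth}) is simpler than what you attempted.
\begin{enumerate}
\item Take the subsolution $v=f(\nu)d+\epsilon d^{1+\mu}$ from Lemma \ref{Lemma Test Functions}, with no correction term. Use it only on a thin annulus $B_1\setminus \bar B_{1-\delta}$, where it is a strict subsolution.
\item On $\partial B_{1-\delta}$ you have $v=O(\delta)$. A Harnack chain gives $w\ge c(\delta)M$ on $B_{1-\delta}$, so $v<w$ there once $M$ is large.
\item Slide the family $v_\rho$ built from the distance to $\partial B_\rho$, letting $\rho$ increase from $1-\delta$ to $1$. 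A first contact cannot occur at an interior positive point, since a strict subsolution cannot touch a solution from below. It cannot occur on the inner sphere, where $w$ is large. It cannot occur on $\partial B_\rho$ for $\rho<1$, where $v_\rho=0<w$.
\item Hence $v\le w$ and $v(z)=w(z)=0$. So $v$ touches $w$ from below at $z\in FB(w)$, contradicting $\partial^-_{\nu\nu}w\le 0$.
\end{enumerate}
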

We now show the existence of a Perron solution to the Dirichlet problem.
\begin{theorem}[Existence of the Perron solution]\label{thm Perron Existence}
For any non-negative $\phi \in C(\partial B_1)$, consider the minimizing class 
\begin{equation*}
\mathcal{A} = \{v \in C(\overline{B_1}): v \text{ is a supersolution to \eqref{eq main problem} and } v = \phi \text{ on } \partial B_1\}, 
\end{equation*}
and let $w$ be the minimal Perron solution 
\begin{equation*}
w(x) = \inf_{v \in A} v(x).
\end{equation*}
Then, $w$ is a solution to \eqref{eq main problem} that agrees with $\phi$ on $\partial B_1$. Furthermore, $w$ satisfies the nondegeneracy estimate 
\begin{equation*}
\sup_{x \in B_r(x_0)} w(x) \geq cr \qquad \forall x_0 \in FB(w).
\end{equation*} 
\end{theorem}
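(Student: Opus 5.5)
The proof follows the classical Perron scheme, adapted to the degenerate free boundary condition of Definition \ref{def free boundary}. The main tools are the test functions of Lemma \ref{Lemma Test Functions}, the interior theory of Theorem \ref{thm interior equation}, and the Lipschitz bound of Theorem \ref{thm Lipschitz Regularity}.

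\textbf{Step 1: $\mathcal{A}$ is nonempty and $w$ has the right boundary values.} By Theorem \ref{thm interior equation}, there is a positive solution $v_0$ of \eqref{eq interior} with $v_0=\phi$ on $\partial B_1$. Since $v_0>0$ in $B_1$, it has no free boundary, so $v_0\in\mathcal{A}$ and $0\le w\le v_0$.

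For the boundary values, I build local barriers at each $x_0\in\partial B_1$:
\begin{itemize}
\item \emph{Upper barriers} near $\phi(x_0)$ come from solutions of the interior equation on small balls or annuli, using the bound $\|v\|_{L^\infty}\le C(1+\|\phi\|_{L^\infty})$.
\item \emph{Lower bounds} are automatic when $\phi(x_0)=0$. When $\phi(x_0)>0$, I use subsolutions of the form $\max(\text{bump},0)$; they lie below every $v\in\mathcal{A}$ by the comparison argument of Step 2.
\end{itemize}
Hence $w$ is continuous up to $\partial B_1$ with $w=\phi$ there.

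\textbf{Step 2: $w$ is a supersolution.} This uses the standard stability facts: the infimum of a family of viscosity supersolutions is a supersolution, both for the interior equation and for the condition $\partial^-_{\nu\nu}w\le 0$. For the free boundary condition, suppose $u=f(\nu)d+\epsilon d^{1+\mu}$ touches $w$ from below at $x_0$. I replace $u$ by a slightly perturbed strict version: shrink $\epsilon$, decrease $\mu$, and translate $\Gamma$. That function then touches some $v\in\mathcal{A}$ from below at a nearby point.
\begin{itemize}
\item If the nearby point is a free boundary point of $v$, this contradicts $v$ being a supersolution.
\item If it is an interior point, it contradicts Lemma \ref{Lemma Test Functions}, which says $u$ is a strict subsolution there.
\end{itemize}
Lower semicontinuity of $w$ is handled with the lower semicontinuous envelope. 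Continuity will follow at the end from the Lipschitz estimate.

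\textbf{Step 3: $w$ is a subsolution (the main step).} I argue by contradiction, by local replacement.
\begin{itemize}
\item \emph{Interior points.} If $w$ fails the subsolution property at an interior point $x_0$, I take a small ball $B\subset\{w>0\}$ and replace $w$ in $B$ by the solution of \eqref{eq interior} with data $w$ on $\partial B$ (Theorem \ref{thm interior equation}). The result is a smaller element of $\mathcal{A}$, a contradiction.
\item \emph{Free boundary points.} This is the hardest case. Suppose $\partial^-_{\nu\nu}w(x_0)\ge 0$ fails, so $w$ is touched from above at $x_0$ by $u=f(\nu)d-\epsilon d^{1+\mu}$. By Lemma \ref{Lemma Test Functions}, $u$ is a strict supersolution near $\Gamma$, and it satisfies the supersolution free boundary condition on $\Gamma$. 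I then slide $\Gamma$ slightly inward, so that the translated $u_t$ is still above $w$ on the boundary of a small neighborhood but below $w$ somewhere inside. The candidate $\min(w,u_t)$ in the neighborhood, equal to $w$ outside, is then a supersolution in $\mathcal{A}$, strictly smaller than $w$ at some point. This contradicts minimality.
\end{itemize}
The delicate point is checking the free boundary supersolution condition for $\min(w,u_t)$ along $\Gamma_t$. There one needs that the term $\epsilon d^{1+\mu}$ dominates any competing test function $f d+\epsilon' d^{1+\mu'}$. I handle this by choosing $\mu$ close to $1$ and using that the coefficient $f(\nu)$ matches exactly. The same argument must work near the points where $w$ and $u_t$ cross, where the expansion from Lemma \ref{Lemma Test Functions} controls the interior equation.

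\textbf{Step 4: nondegeneracy.} Minimality makes this short. Suppose $\sup_{B_r(x_0)}w<cr$ for some $x_0\in FB(w)$. By rescaling \eqref{eq rescaling}, I may take $r=1$. I construct an explicit supersolution $v$ in $B_1(x_0)$ with the following properties:
\begin{itemize}
\item $v\ge c$ on $\partial B_1(x_0)$;
\item $v$ vanishes on $B_{1/2}(x_0)$;
\item $v$ is a radial profile $f(\nu)d+\epsilon d^{1+\mu}$-type in the distance to $\partial B_{1/2}(x_0)$. By Lemma \ref{Lemma Test Functions} such a profile is a supersolution for small $d$, and capping it keeps it so.
\end{itemize}
On its free boundary, $v$ fails to be touchable from below by competitors with smaller coefficients. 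Then $\min(w,v)$, extended by $w$ outside the ball, lies in $\mathcal{A}$ and vanishes near $x_0$, which forces $w\equiv 0$ near $x_0$. This contradicts $x_0\in FB(w)$.

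Finally, Theorem \ref{thm Lipschitz Regularity} gives that $w$ is locally Lipschitz, so $w$ is continuous and is a genuine viscosity solution.
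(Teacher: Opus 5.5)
There is a genuine gap: you never show that $w$ is continuous or that $\{w>0\}$ is open. This is the part of the argument the paper spends its effort on.

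\textbf{Continuity and positivity.} Deferring continuity to Theorem~\ref{thm Lipschitz Regularity} is circular. That theorem (through Proposition~\ref{prop optimal growth}, which runs the Harnack inequality in the positivity set) applies only to continuous functions that already solve \eqref{eq main problem}. The envelope version of Perron's method does not rescue this either: it needs a comparison principle to conclude $w^*=w_*$, and none is available for \eqref{eq main problem}. Both replacement arguments in your Step 3 presuppose what is missing. They need $B\subset\{w>0\}$ with $\{w>0\}$ open, and they need the patched functions (including $\min(w,u_t)$) to be continuous and hence in $\mathcal{A}$. In the interior case you must also use the \emph{minimal} interior solution with data $w$ on $\partial B$, since an arbitrary one need not lie below $w$.

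The missing idea is a uniform positivity estimate: if $w(y)>0$, then every $v\in\mathcal{A}$ is positive in $B_{c_0w(y)}(y)$. The paper proves it as follows.
\begin{enumerate}
\item Rescale so that the nearest zero of $v$ lies on $\partial B_1$.
\item Replace $v$ in an inner ball by the minimal interior solution; the result stays in $\mathcal{A}$ and is smaller.
\item Apply Lemma~\ref{Lemma Harnack Inequality} to get $v\ge c\,w(y)$ in $B_{1-c(\mu)}$.
\item Slide the subsolution $f(\nu)d+d^{1+\mu}$ of Lemma~\ref{Lemma Test Functions} through the annulus until it touches $v$ from below at that zero. The supersolution free boundary condition forbids this.
\end{enumerate}
This gives openness of $\{w>0\}$ and, with interior estimates, a uniform modulus of continuity.

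The same mechanism is what your boundary lower barriers need. The barrier $\max(\text{bump},0)$ has slope far below $f(\nu)$ at the edge of its support. If it touches some $v\in\mathcal{A}$ from below at a free boundary point of $v$, nothing in the definition is violated. In fact no comparison that avoids the forbidden profile can work: $K(x_n-a)_+$ with $K>f(e_n)$ is an interior supersolution, and it is excluded from $\mathcal{A}$ only by the free boundary condition.

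\textbf{Step 4.} The nondegeneracy argument also fails as written.
\begin{itemize}
\item By Lemma~\ref{Lemma Test Functions}, $f(\nu)d+\epsilon d^{1+\mu}$ is a \emph{sub}solution. It is exactly the profile forbidden by the supersolution free boundary condition, so your $v$ is touched from below on its own free boundary, for instance by itself. You need $f(\nu)d-\epsilon d^{1+\mu}$.
\item The lemma only covers $d\le c$, and capping by a constant does not preserve supersolutions: positive constants are strict subsolutions because $h(0)<0$.
\end{itemize}
The paper avoids both problems by making $v$ vanish on $B_{r-t}(x_0)$ with $t\le c$, so the whole annulus lies in the range of the lemma. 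It then takes $\delta\ll t$ so that $v\ge w$ on $\partial B_r(x_0)$.

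\textbf{Steps 2--3 (smaller point).} A rigid translation of $\Gamma$ toward the zero side does not give $u_t<w$ on $\partial\mathcal N$. The reason is that $u_t$ becomes positive in a strip of width $t$ where $w$ may vanish, and shrinking $\epsilon$ only gains order $d^{1+\mu}$. The paper instead bends $\Gamma$, following one parallel surface near $x_0$ and the other away from it.
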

Next, we provide a result about the regularity of the free boundary at flat points. This then implies by standard arguments that the free boundary is a $C^{1,\alpha}$ graph around flat points for some $\alpha > 0$ (see, for example, chapter 8 in \cite{velichkov2023regularity}). 

\begin{theorem}\label{thm flatness improv}
Assume that $w$ is a solution to \eqref{eq main problem} in $B_1$ with $0 \in FB(w)$. Then, there exist universal constants $r, \epsilon_0$ with $r \in (0,1)$ such that if for some $\epsilon  \in (0, \epsilon_0)$,
\begin{align*}
f(e_n)x_n - \epsilon \leq w \leq (f(e_n)x_n + \epsilon)_{+} && \text{ in } B_1,
\end{align*}
then,
\begin{align*}
f(\nu) (x \cdot \nu) - \frac{r}{2}\epsilon \leq w \leq \big(f(\nu)(x \cdot \nu) + \frac{r}{2}\epsilon)_{+} && \text{ in } B_r  
\end{align*}
for some $\nu \in \mathbb{S}^{n-1}$. 
\end{theorem}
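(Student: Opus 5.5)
We argue by compactness and contradiction, following the De Silva improvement of flatness scheme adapted to the degenerate condition $\partial^{-}_{\nu\nu}w=0$. Suppose the statement fails. Then there are solutions $w_k$ and $\epsilon_k\to0$ with $0\in FB(w_k)$ and
\begin{equation*}
f(e_n)x_n-\epsilon_k\le w_k\le (f(e_n)x_n+\epsilon_k)_+ \quad\text{in } B_1,
\end{equation*}
but with no improvement in $B_r$. We normalize the error by setting
\begin{equation*}
\tilde w_k(x)=\frac{w_k(x)-f(e_n)x_n}{\epsilon_k}\quad\text{on } \overline{B_1^+(w_k)}.
\end{equation*}
Flatness gives $|\tilde w_k|\le 1$.

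\textbf{Step 1: Harnack inequality.} The key tool is a Harnack-type statement: if $w$ is $\epsilon$-flat in $B_1$ and at the point $\frac{1}{2}e_n$ it sits above (resp. below) the plane by a fraction of $\epsilon$, then on $B_{1/2}$ the flatness improves from one side by a factor $(1-c)\epsilon$. I would prove this using barriers built from the test functions of Lemma \ref{Lemma Test Functions}. Take a family of surfaces $\Gamma_t$ that are small perturbations $x_n=-\epsilon+tc\epsilon\varphi(x')$ of a hyperplane, with $\varphi$ a bump, and consider
\begin{equation*}
u_t=f(\nu)d+s\,d^{1+\mu}
\end{equation*}
on the strip $d\le c$, glued to the interior supersolution comparison in $\{x_n>c\}$. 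In that region the interior equation is uniformly elliptic because $w\ge c$, so the Harnack inequality of Theorem \ref{thm interior equation} transfers a gain from $\tfrac12 e_n$ into a neighborhood.

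Sliding $t$ upward, a first touching point cannot be interior, since $u_t$ is a strict subsolution there. It also cannot lie on the free boundary: the signed $d^{1+\mu}$ term is exactly what Definition \ref{def free boundary} forbids. For the sub-barrier we need $s<0$ and touching from below, which contradicts $\partial^-_{\nu\nu}w\le0$; the sign bookkeeping must match Lemma \ref{Lemma Test Functions}, possibly by swapping the roles of $\pm$. Note that $\nu(x)$ deviates from $e_n$ by $O(\epsilon)$, so $f(\nu)d$ differs from $f(e_n)x_n$ at first order in $\epsilon$ through $\nabla f$. This is why the limit will carry the oblique boundary condition below.

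Iterating Step 1 gives a H\"older modulus for $\tilde w_k$ down to scale $\epsilon_k$. By Ascoli, the graphs of $\tilde w_k$ converge in Hausdorff distance to a H\"older function $\tilde w$ on $\overline{B_{1/2}}\cap\{x_n\ge0\}$.

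\textbf{Step 2: Linearized problem.} Linearizing $w\Delta w=h(\nabla w)$ around $w_0=f(e_n)x_n$ gives
\begin{equation*}
f(e_n)x_n\,\Delta\tilde w=\nabla h(f(e_n)e_n)\cdot\nabla\tilde w.
\end{equation*}
So the limit satisfies a degenerate equation
\begin{equation*}
x_n\Delta\tilde w=b\cdot\nabla\tilde w, \qquad b_n=\frac{\partial_n h(f(e_n)e_n)}{f(e_n)}=\mu_0<1 \quad\text{by \eqref{eq mu condition}}.
\end{equation*}
Since $\mu_0<1$, the indicial roots at $x_n=0$ are $1$ and $1+\mu_0$, up to the $b'$ tangential drift. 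The free boundary condition passes to the limit, in the viscosity sense, as the statement that the coefficient of $x_n^{1+\mu_0}$ vanishes, i.e. $\tilde w$ has no $x_n^{1+\mu}$ term.

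I would verify this via test polynomials
\begin{equation*}
p=a\cdot x'+\ell x_n+\tfrac{1}{2}x^TMx+s\,x_n^{1+\mu},
\end{equation*}
perturbed into the form \eqref{eq free boundary condition supersol}, and transfer touching from $\tilde w$ back to $w_k$.

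\textbf{Step 3: Regularity of the linear problem.} We then show that viscosity solutions of this linear problem with this boundary condition are $C^{1,\alpha}$ up to $\{x_n=0\}$, with expansion
\begin{equation*}
\tilde w=\tilde w(0)+\nabla\tilde w(0)\cdot x+O(|x|^{1+\alpha}).
\end{equation*}
The approach is to reduce to the flat one-dimensional profile by tangential difference quotients, which solve the same equation, and then solve the ODE in $x_n$ explicitly, excluding the $x_n^{1+\mu}$ mode. Choosing $r$ small, $\tilde w$ is then $r^{1+\alpha}\le r/4$ close to a linear function. Translating back, and absorbing the tangential slope into a rotated direction $\nu$ with the $O(\epsilon)$ correction of $f(\nu)$ versus $f(e_n)$, contradicts the assumed failure for large $k$.

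The main obstacle is Step 1, specifically constructing barriers compatible with both the degenerate interior operator near $x_n=0$ and the free boundary condition, while controlling the $d^{1+\mu}$ correction uniformly at scale $\epsilon$. The restriction $d\le c(s,\mu,\max\kappa)$ in Lemma \ref{Lemma Test Functions} must be respected even though the curvature of $\Gamma_t$ is of order $\epsilon$. I would first try taking $s$ of order $\epsilon$ and $\mu$ fixed between $\mu_0$ and $1$.
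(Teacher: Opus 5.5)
Your proposal follows essentially the same route as the paper. The steps match: a De Silva-type Harnack inequality built from the barriers of Lemma \ref{Lemma Test Functions} with $s$ of order $\epsilon$; compactness of $(w_k-f(e_n)x_n)/\epsilon_k$; the degenerate oblique linearized problem with the $x_n^{1+\mu_0}$ mode excluded; and $C^{1,\alpha}$ regularity for that problem, which the paper imports from De Silva--Savin rather than proving via difference quotients. The one variation is the barrier geometry. The paper uses the distance to a ball of radius $\sim 1/\epsilon$, whose curvature gives the lateral room to absorb the $s\,d^{1+\mu}$ term; your bump-perturbed plane must instead get that room by starting the barrier $O(s)$ lower.

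Do fix two slips. The lower barrier is $f(\nu)d+s\,d^{1+\mu}$ with $s>0$: this is a strict subsolution by Lemma \ref{Lemma Test Functions}, and it is exactly the function that $\partial^{-}_{\nu\nu}w\le 0$ forbids from touching below. Also, the indicial roots for $\tilde w$ are $0$ and $1+\mu_0$, not $1$ and $1+\mu_0$.
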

\noindent
Finally, we highlight the connection with minimal surfaces.

\begin{theorem}\label{thm minimal surfaces}
Suppose we are given a sequence of solutions $\{w_k\}$ to \eqref{eq main problem} along with a sequence of right-hand sides $\{h_k\}$ and domain functions $\{f_k\}$ such that $w_k$ satisfies Assumption \ref{assumption standard assumption} and the free boundary condition \eqref{eq free boundary}. 

Furthermore, suppose that
\begin{equation}
\frac{\nabla h_k (f_k(\nu)\nu) \cdot \nu}{f_k(\nu)} \rightarrow 1
\end{equation}
\noindent
for all $\nu \in \mathbb{S}^{n-1}$ and that
\begin{align}\label{eq convergence}
&w_k \rightarrow w \qquad \text{ uniformly } 
&h_k \rightarrow h \qquad \text{ in } C^1 \\  
&\partial \{w_k >0\} \rightarrow\partial\{w > 0\} \qquad \text{ in Hausdorff distance}. \nonumber
\end{align}
Then, $w$ is a viscosity solution to the interior equation
\begin{equation}\label{eq min surf h}
\Delta w = \frac{h(\nabla w)}{w} \qquad \text{ in } B_{1}^{+},
\end{equation}
and $FB(w)$ is a minimal surface in the viscosity sense.
\end{theorem}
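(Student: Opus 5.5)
The statement has two parts, and I will prove them separately.

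\textbf{Part 1: the interior equation.} This follows from stability of viscosity solutions under uniform convergence. Let $\varphi$ be a smooth test function touching $w$ from below (or above) at a point $x_0$ with $w(x_0)>0$.

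\begin{itemize}
\item Perturb $\varphi$ by subtracting $\delta|x-x_0|^2$, so that the touching is strict.
\item Uniform convergence then yields points $x_k \to x_0$ and constants $c_k \to 0$ such that $\varphi + c_k$ touches $w_k$ at $x_k$, with $w_k(x_k)>0$ for $k$ large.
\item At $x_k$ we get $w_k(x_k)\Delta\varphi(x_k) \le h_k(\nabla\varphi(x_k))$ (respectively $\geq$).
\item Since $h_k\to h$ in $C^1$ (locally uniformly suffices), we pass to the limit and obtain the viscosity inequality for $w$.
\end{itemize}

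\textbf{Part 2: the free boundary is a viscosity minimal surface.} Suppose $FB(w)$ is touched at $x_0$ from the positive side by a $C^3$ surface $\Gamma$ with mean curvature $\kappa(x_0)>0$. By the usual convention, this means $\{w>0\}$ lies on one side of $\Gamma$ locally. After a small perturbation, assume the touching is strict and $\kappa>c_0>0$ near $x_0$. I aim for a contradiction; the opposite case, $\kappa<0$ touching from the zero side, is symmetric.

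\emph{Construction of the barrier.} Write $\mu_k := \sup_\nu \frac{\nabla h_k(f_k(\nu)\nu)\cdot\nu}{f_k(\nu)}\to 1$ and choose exponents $\mu_k'\in(\mu_k,1)$, so that also $\mu_k'\to 1$. Take the family of translated or dilated surfaces $\Gamma_t$ and the functions
\[
u_{k,t}=f_k(\nu)d_t+\epsilon\, d_t^{1+\mu_k'}.
\]
The expansion preceding Lemma \ref{Lemma Test Functions} gives
\[
u\Delta u - h_k(\nabla u)=\epsilon\, f_k\,(1+\mu_k')\bigl(\mu_k'-\tfrac{\nabla h_k\cdot\nu}{f_k}\bigr)d^{\mu_k'}+\kappa f_k^2 d+\text{error}.
\]
The first term is nonnegative and $o(d^{\mu'})$ in its coefficient. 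The curvature term $\kappa f_k^2 d$ is positive. So in the limit regime the positive mean curvature makes $u_{k,t}$ a strict subsolution in a \emph{fixed} neighborhood $\{d_t<\rho\}$, with $\rho$ uniform in $k$. This is the quantitative point of the argument.

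\emph{Key quantitative claim.} The uniformity of $\rho$ is the main obstacle. The constant in Lemma \ref{Lemma Test Functions} degenerates as $\mu\to1$, because the gap $\mu'-\mu_k$ tends to $0$. One must therefore show that for $d\le\rho$ the curvature term dominates all error terms:
\begin{itemize}
\item the $o(d)$ Taylor errors of $h_k$, controlled by the $C^1$ convergence of $h_k$ and the $C^2$ regularity of $f_k$;
\item the error terms involving $\epsilon d^{\mu'}$.
\end{itemize}
I would handle this by choosing $\epsilon$ small relative to $\rho$. Since $d^{1+\mu'}\le d^{1+\mu_k}$ and $\mu'$ stays near $1$, the $\epsilon$-terms are of order $\epsilon d$ and are absorbed by $\kappa f^2 d$.

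\emph{Comparison and contradiction.} Since $w_k\to w$ uniformly and $FB(w_k)\to FB(w)$ in Hausdorff distance, the functions $w_k$ are eventually positive away from a small neighborhood of $FB(w)$. Start with $\Gamma_t$ pushed far into the zero side, so that $u_{k,t}$ lies below $w_k$. Strict touching, together with the nondegeneracy and Lipschitz bounds (Theorem \ref{thm Lipschitz Regularity}), gives boundary ordering on $\partial\{d_t<\rho\}$, after possibly rescaling the coefficient of $d$ slightly. Now slide $t$ until first contact with $w_k$ at a point $y_k$. There are two cases:
\begin{itemize}
\item An interior contact with $w_k(y_k)>0$ is excluded because $u_{k,t}$ is a strict subsolution there.
\item A contact at $y_k\in FB(w_k)$ is excluded because it contradicts the supersolution free boundary condition $\partial^-_{\nu\nu}w_k\le0$: indeed, $u_{k,t}$ has exactly the form \eqref{eq free boundary condition supersol} with $\mu=\mu_k'$.
\end{itemize}
Hence $\Gamma_t$ can be slid past $x_0$. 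Passing $k\to\infty$ then shows that $\{w>0\}$ crosses $\Gamma$ near $x_0$, contradicting the touching. The case $\kappa<0$ is handled the same way, using the supersolution barriers $f_k d-\epsilon d^{1+\mu'}$ and the subsolution free boundary condition.
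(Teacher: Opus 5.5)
\textbf{The proposal has a genuine gap: nothing justifies the boundary ordering on the inner edge of your comparison region, and the paper needs two ingredients you do not have.}

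Your Part 1 is fine. The architecture of Part 2 also matches the paper's: barriers for $w_k$ with leading coefficient exactly $f_k(\nu)$, sliding, interior contact excluded by the strict subsolution property, free boundary contact excluded by $\partial^-_{\nu\nu}w_k\le 0$, then $k\to\infty$.

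The gap is on $\{d_t=\rho\}$. There $u_{k,t}\ge f_k\rho+\epsilon\rho^{1+\mu'}$. So when $\Gamma_t$ reaches $x_0$, you need $w_k>f_k\rho+\epsilon\rho^{1+\mu'}$ at distance $\rho$ from $\Gamma$ near $x_0$, which is a pointwise lower bound with the \emph{sharp} slope. None of your tools gives this:
\begin{itemize}
\item Strict touching only gives $w>0$ on $\Gamma\setminus\{x_0\}$, with no rate.
\item The Lipschitz bound is an upper bound.
\item Nondegeneracy ($\sup_{B_r}w\ge cr$, small $c$, not pointwise) is proved only for Perron solutions, not for arbitrary solutions $w_k$.
\end{itemize}
Nor can you ``rescale the coefficient of $d$''. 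Definition \ref{def free boundary} only excludes touching by functions with leading coefficient exactly $f_k(\nu)$. A function $(1-\eta)f_k(\nu)d+\dots$ is a subsolution near $\Gamma_t$ whatever the curvature, since $h_k((1-\eta)f_k(\nu)\nu)<0$ forces $h_k(\nabla u)/u\to-\infty$. It can therefore touch $w_k$ at a free boundary point with no contradiction.

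The ordering step is in fact false as stated. It uses neither $\kappa>0$ nor $r_k:=\nabla h_k(f_k(\nu)\nu)\cdot\nu/f_k(\nu)\to1$. For a single $h$, Lemma \ref{Lemma Test Functions} already makes $f(\nu)d+\epsilon d^{1+\mu}$ a subsolution near any $C^3$ surface. So your sliding would show that $\partial B_R(Re_n)$ cannot touch the free boundary of the solution $f(e_n)(x_n)_+$ at $0$. It does touch, and strict touching, Lipschitz and nondegeneracy all hold there. The ordering fails exactly at the inner edge: at $\rho e_n$ one has $w=f(e_n)\rho<f(e_n)\rho+\epsilon\rho^{1+\mu}$.

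The paper supplies the missing slack with two ingredients.
\begin{itemize}
\item \emph{Sharp growth.} Lemma \ref{Lemma Growth Free Boundary} does this. The side of $\Gamma$ inside $\{w>0\}$ gives an interior tangent ball at $x_0$. Comparing $w$ with the zero-Dirichlet solution of \eqref{eq interior} in that ball yields $w\ge f(\nu)d+o(d)$ near $x_0$. This needs only the interior equation from your Part 1.
\item \emph{A barrier with room.} The barrier is $f(\nu)d+\epsilon d^{1+\mu}-Md^2$ on the layer $\{d\le\delta/M\}$. At $d=\delta/M$ it is about $(f-\delta)d$. Hence it lies below $w$ once $M$ is large, and below $w_k$ for $k$ large by uniform convergence.
\end{itemize}
This is where $r_k\to1$ is genuinely used. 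The term $-Md^2$ contributes $-2Mfd$ to $u\Delta u$ and $-2Mr_kfd$ to $h_k(\nabla u)$. To leading order it therefore costs only $-2Mf(1-r_k)d$, which $\kappa f^2d$ absorbs for $k$ large.

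Smaller points:
\begin{itemize}
\item Touching from the positive side means one side of $\Gamma$ lies in $\{w>0\}$, not that $\{w>0\}$ lies on one side of $\Gamma$. So the slide must start with $\Gamma_t$ displaced into $\{w>0\}$. If $\Gamma_t$ sits in the zero set, then $u_{k,t}>0=w_k$ on the strip between $\Gamma_t$ and the free boundary.
\item Your $\epsilon$-terms are of order $\epsilon d^{\mu'}$, not $\epsilon d$. This is harmless because the leading one has the good sign and the Taylor remainder can be absorbed by taking $\epsilon=\epsilon_k\to0$.
\end{itemize}

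A caveat that applies equally to the paper: $\nabla\bigl(f(\nu)d\bigr)$ has a tangential part $d\,D^2d\,\nabla_{\mathbb{S}^{n-1}}f(\nu)$. Where $\nabla_{\mathbb{S}^{n-1}}f(\nu)\neq0$, the vector $\nabla h_k(f_k(\nu)\nu)$ is not parallel to $\nu$. This part then enters $h_k(\nabla u)$ at order $d$, the same order as $\kappa f^2d$. The quoted expansion drops it, which is legitimate only for constant $f$ (e.g.\ $D=B_1$).
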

We organize the work as follows. In Section \ref{sec Existence and Regularity}, we study the existence and regularity of the problem in two parts: Subsection \ref{subsec  interior} is dedicated to studying strictly positive solutions to the interior equation \eqref{eq interior} in $B_1$, while Subsection \ref{subsec free boundary} is dedicated to studying solutions to the free boundary problem \eqref{eq main problem}. In Section \ref{sec flatness improv}, we show the improvement of flatness argument detailed in Theorem \ref{thm flatness improv}. Finally, Section \ref{sec minimal surface} is dedicated to proving the connection with minimal surfaces detailed in Theorem \ref{thm minimal surfaces}.

\textbf{Acknowledgments:} The author would like to thank Ovidiu Savin for his continuous advice as well as for his guidance and mentorship throughout this project.

\section{Existence and Regularity}\label{sec Existence and Regularity}
In this section, we prove existence and regularity results for solutions of both the interior equation \eqref{eq interior} and of the free boundary problem \eqref{eq main problem}. In order to do this, we must assume the existence of a constant $C_h \in (0,1)$ such that
\begin{equation}\label{eq assumption c_h}
h(\rho) \leq C_h|\rho|^2,    
\end{equation}
\noindent
for all $\rho \in \mathbb{R}^n$. In the case where $h$ arises from an Alt-Phillips equation for negative power potentials, such assumption is indeed met. 

We first present results for solutions of the interior equation \eqref{eq interior} in $B_1$, and then we use those results for the analysis of solutions of the free boundary problem \eqref{eq main problem}.

\subsection{The Interior Equation} \label{subsec  interior}
We first start by giving an a priori Harnack inequality that establishes the regularity of solutions to the interior equation \eqref{eq interior} in $B_1$. The resulting regularity estimate will then be used to show the existence of a solution to the Dirichlet problem for the interior equation \eqref{eq interior} in $B_1$ for any continuous, non-negative boundary data. 

\begin{lemma}\label{Lemma Harnack Inequality}
Suppose that $w$ is a solution to \eqref{eq interior} in $B_1$, and $\sigma \geq 0$ is such that $w \geq \sigma$ on $B_1$. Then, it follows that 

\begin{equation}\label{eq Harnack}
\sup_{x \in B_{\frac{1}{2}}} [w(x) - \sigma] \leq C[\inf_{x \in B_{\frac{1}{2}}} [w(x) - \sigma]+ 1].
\end{equation}
\end{lemma}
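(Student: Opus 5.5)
Set $v = w - \sigma \geq 0$. Since $w \geq \sigma$, the equation gives
\[
\Delta v = \frac{h(\nabla v)}{w} \leq \frac{C_h|\nabla v|^2}{w}.
\]
The plan is to treat the supersolution and subsolution directions separately. In each direction we turn the quotient $h(\nabla w)/w$ into a quadratic or bounded drift by a change of variables, and then invoke the Krylov--Safonov / Imbert--Silvestre type Harnack estimates for nondivergence equations with quadratic gradient growth. The condition $C_h<1$ from \eqref{eq assumption c_h} is used exactly to neutralize the $|\nabla w|^2/w$ singularity.

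\textbf{Lower bound (weak Harnack for $v$).} Consider $\psi = w^{1-C_h} - \sigma^{1-C_h}$. Then
\[
\Delta (w^{1-C_h}) = (1-C_h)w^{-C_h}\Big(\Delta w - C_h\frac{|\nabla w|^2}{w}\Big) \leq 0
\]
by \eqref{eq assumption c_h}. Hence $w^{1-C_h}$ is superharmonic and $\psi \geq 0$ is superharmonic. The weak Harnack inequality for superharmonic functions gives
\[
\Big(\fint_{B_{3/4}} \psi^{p}\Big)^{1/p} \leq C\inf_{B_{1/2}}\psi .
\]
Comparing $\psi$ with $v$ through the mean value theorem, we have $\psi \approx w^{-C_h}v$. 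This is the main nuisance, since the weight $w^{-C_h}$ varies. I would split into two cases. If $\sigma \geq 1$, or $w$ is comparable to $\sigma$ in the region considered, the weight is comparable to a constant and we get a clean weak Harnack for $v$. If $w$ is of order $1$, the "$+1$" in \eqref{eq Harnack} absorbs the loss. Concretely, I would aim for
\[
\|v\|_{L^p(B_{3/4})} \leq C(\inf_{B_{1/2}} v + 1).
\]

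\textbf{Upper bound (local maximum principle).} Where $|\nabla w|$ lies in a bounded set containing $\bar D$, $h$ is bounded, so $\Delta w \geq -C/w$. This lower bound is harmful only where $w$ is small. I would rewrite it as $\Delta(w^2/2) = w\Delta w + |\nabla w|^2 \geq h(\nabla w) + |\nabla w|^2 \geq -C$, using that $h \geq -C$ on $D$ and $h>0$ outside. Thus $w^2 + C|x|^2$ is subharmonic, which yields
\[
\sup_{B_{1/2}} w^2 \leq C\Big(\fint_{B_{3/4}} w^2 + 1\Big),
\]
and hence $\sup_{B_{1/2}} w \lesssim \|w\|_{L^2(B_{3/4})} + 1$. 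Then I would replace $w$ by $v = w-\sigma$. Applying the same computation to $w^2 - \sigma^2 = v(v+2\sigma)$ gives
\[
\Delta\big(v^2 + 2\sigma v\big) \geq -C,
\]
and the mean value inequality again controls $\sup v$ by an $L^p$ average of $v$ plus $1$.

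\textbf{Matching the exponents (main obstacle).} The remaining step is to match the $L^p$ exponent from the weak Harnack with the integrability needed in the local maximum principle. This is the main obstacle I expect. The standard fix is a covering/iteration argument, or working with $L^\varepsilon$ local maximum principles for $C_h$-dependent powers of $v$. If that fails, I would fall back on a contradiction-compactness argument: normalize by $\inf_{B_{1/2}} v + 1$, use the interior $C^{1,\alpha}$ regularity available once $w$ is bounded below, and derive a contradiction from the strong maximum principle for the limit.

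The counterexample at $C_h=1$ mentioned in the introduction shows that the superharmonicity step, which requires $1-C_h>0$, is genuinely essential.
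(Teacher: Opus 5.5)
Your skeleton matches the paper's: a weak Harnack inequality from a superharmonic power, a local maximum principle from a subharmonic quadratic correction, then a combination. As written, however, neither of the last two steps is actually proved.

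\textbf{The weak Harnack step for $v=w-\sigma$.} Your case split does not work. The condition $\sigma\ge 1$ does not make the weight $\xi^{-C_h}$ (with $\xi$ between $\sigma$ and $w$) comparable to a constant, since $v$ can be much larger than $\sigma$. Likewise, ``the $+1$ absorbs the loss'' is not an argument. The missing observation is that you never need to pass through $w^{1-C_h}$. Since $0\le v\le w$, you have $\frac{v}{w}h(\nabla w)\le \max\{h(\nabla w),0\}\le C_h|\nabla w|^2$, hence
\[
\Delta v^{p}=p\,v^{p-2}\Big((p-1)|\nabla w|^2+\frac{v}{w}\,h(\nabla w)\Big)\le p\,v^{p-2}|\nabla w|^2\,(p-1+C_h)\le 0
\]
for $p\le 1-C_h$. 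The weak Harnack inequality for the nonnegative superharmonic function $v^p$ then gives $\|v\|_{L^p(B_{3/4})}\le C\inf_{B_{1/2}}v$ directly, with no weight and no $+1$. This is the paper's computation. Your $\psi$ can also be rescued: use $\psi\le\min\{v^q,\,q\sigma^{q-1}v\}$ and $\psi\ge q(\sigma+v)^{q-1}v$ separately on $\{v\le\sigma\}$ and $\{v>\sigma\}$. But that is precisely the computation you left out.

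\textbf{The combination step.} You flag matching the exponents as the main obstacle, but it is not one. For a nonnegative subharmonic $u$ in $B_1$, the estimate $\sup_{B_{1/2}}u\le C(p)\|u\|_{L^p(B_{3/4})}$ holds for every $p>0$, and this is exactly what the paper uses.

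Your upper-bound step itself is a valid alternative to the paper's. The paper first shows $w\ge c$ in the interior by sliding the bump subsolutions $t(\frac12-|x|^2)_+$ (using $h(0)<0$), which gives $\Delta(w-\sigma)\ge -C$. Your identity $\Delta(w^2)=2h(\nabla w)+2|\nabla w|^2\ge -C$ needs only $\inf h>-\infty$. To close your version, apply the local maximum principle to $u=v^2+2\sigma v+C|x|^2$ with exponent $p/2$. Writing $m=\inf_{B_{1/2}}v$ and $M=\sup_{B_{1/2}}v$, this yields $M^2+2\sigma M\le C(m^2+\sigma m+1)$. If $M>K(m+1)$ with $K^2\ge C$ and $2K\ge C$, the left side would exceed the right, so $M\le C(m+1)$.

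Finally, drop the compactness fallback. The H\"older and $C^{1,\alpha}$ estimates you would invoke are derived in the paper from this very lemma, so using them here would be circular.
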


\begin{proof}
For $p > 0$, we compute that
\begin{equation*}
\Delta (w - \sigma)^{p} = p(p-1)(w - \sigma)^{p-2}|\nabla w|^{2} + p\frac{(w - \sigma)^{p-1}}{w}h(\nabla w) 
\end{equation*}
\begin{equation*}
\leq p(w - \sigma)^{p-2}|\nabla w|^2\big((p-1) + C_h \big) \leq 0 
\end{equation*}
if $p$ is sufficiently small. This then implies that 

\begin{equation}\label{eq weak Harnack}
||(w - \sigma)||_{L^p(B_{\frac{3}{4}})} \leq C[\inf_{x \in B_{\frac{1}{2}}}w(x) - \sigma]. 
\end{equation}
Now, we show that $w \geq c$ in $B_{\frac{1}{2}}$. This will then allow us to show that $\Delta w \geq -C$, which will conclude the argument. To see this, consider the family of functions 
\begin{equation*}
\phi_{t} = [t(\frac{1}{2} - |x|^2)]_{+}.
\end{equation*}
As $t \rightarrow 0$, notice that
\begin{equation*}
\Delta \phi_{t} \rightarrow 0, |\nabla \phi_{t}| \rightarrow 0, \phi_{t} \rightarrow 0.
\end{equation*}
Since $h(0) < 0$, $\phi_{t}$ will be a subsolution to \eqref{eq interior} for $t \leq c$ in $B_{\frac{1}{2}}$. Then, if $w < c$ at some point $x_1 \in B_{\frac{1}{2}}$, $\phi_{t}$ will touch $w$ by below for some small $t > 0$ at some $x_2 \in B_\frac{1}{2}$, a contradiction. Hence, $w \geq c$, and so
\begin{equation*}
\Delta (w - \sigma) \geq -C \qquad \text{ in } B_{\frac{1}{2}}.
\end{equation*}
It then follows that  

\begin{equation*}
\Delta \big((w - \sigma) + C|x|^{2}\big) \geq 0. \qquad
\end{equation*}
\noindent
Thus, 

\begin{equation}\label{eq weak maximum principle}
\sup_{x \in B_{\frac{1}{2}}} w(x)-\sigma \leq C||(w - \sigma) + C|x|^{2}||_{L^{p}(B_{\frac{3}{4}})} \leq C[||(w - \sigma)||_{L^{p}(B_{\frac{3}{4}})} + 1].   
\end{equation}
\noindent
Combining \eqref{eq weak Harnack} with \eqref{eq weak maximum principle}, we obtain the result. 
\end{proof}

\begin{remark}
The condition that $C_h < 1$ is somewhat sharp if one wishes to obtain a Harnack inequality. For example, for $j \in \mathbb{N}$ and $t \in \mathbb{R}$, the functions 
\begin{equation*}
g_j(t) = e^{jt}     
\end{equation*}
satisfy the ordinary differential equation 

\begin{equation*}
g_jg_j'' = (g_j')^2.   
\end{equation*}
However, 
\begin{equation*}
g_j(0) = 1 \text{  while  } g_j(1) = e^{j}    
\end{equation*}
for all $j \in \mathbb{N}$. Hence, a family of solutions in the multidimensional case that exhibits this behavior along any direction will serve as a counterexample.

There is another condition that can be imposed that also guarantees the existence of a Harnack inequality. Namely, if there exists some positive constant $\delta$ such that 
\begin{equation*}
 h(\rho) \geq (1 + \delta)|\rho|^2   
\end{equation*}
for all $|\rho| \geq C$, then it can be shown that $v^{-\epsilon}$ will be superharmonic for sufficiently small $\epsilon > 0$. This then implies the following: letting $N = \sup_{B_{\frac{1}{2}}} w$, there exists a constant $C(N)$  with $C(N) \rightarrow 0$ as $N \rightarrow \infty$ such that
\begin{equation}\label{eq Harnack 1}
|B_\frac{1}{2} \cap \{w \leq 1\}|\leq C(N).
\end{equation}
We may then use the ideas of Imbert and Silvestre in Lemma 3.1 of \cite{imbert2016estimates} to show that if $w(0) = 1$, then
\begin{equation}\label{eq Harnack 2}
|B_\frac{1}{2} \cap \{w \leq 1\}| \geq c.
\end{equation}
It then follows that taking $N$ large enough will provide a contradiction.
\end{remark}
We may now show from standard arguments that a solution $w \in C(B_1)$ is in $C^{\alpha}(B_{\frac{1}{2}})$ for some $\alpha < c$. For self-containment, we state the following lemmas, and refer the interested reader to \cite{de2021certain} for the proofs, which are identical for this case. First, we present an oscillation decay estimate, which follows from iterating Lemma \ref{Lemma Harnack Inequality}. 
\begin{lemma}[Oscillation Decay]\label{Lemma Oscillation Decay}
Let $w$ be a solution to the interior equation \eqref{eq interior}. Then, let 
\begin{equation*}
\omega(r) = \sup_{x \in B_r} w(x) -\inf_{x \in B_r} w(x)
\end{equation*}
denote the oscillation of $w$ in $B_r$. Then, there exists some $\gamma \in (0,1)$ such that for any $r \leq \frac{1}{2}$
\begin{equation*}
\omega(\frac{r}{2}) \leq \gamma \omega(r).
\end{equation*}
\end{lemma}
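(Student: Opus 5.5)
The plan is to iterate Lemma \ref{Lemma Harnack Inequality} at dyadic scales using the rescaling \eqref{eq rescaling}, while tracking the additive constant $1$ in \eqref{eq Harnack}, which is not scale invariant. Fix $r \le \frac12$ and set $M_r = \sup_{B_r} w$, $m_r = \inf_{B_r} w$. Consider $v(x) = w(rx)/r$ on $B_1$. The interior equation \eqref{eq interior} is invariant under this rescaling, and $v \ge m_r/r$ on $B_1$. Applying Lemma \ref{Lemma Harnack Inequality} to $v$ with $\sigma = m_r/r$ and then scaling back gives
\begin{equation*}
M_{r/2} - m_r \le C\big[(m_{r/2} - m_r) + r\big].
\end{equation*}
The term $r$ comes from the unscaled $+1$ in \eqref{eq Harnack}.

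Next I would apply the same lemma to the function $M_r - w$. This is the delicate step, because $M_r - w$ does not solve \eqref{eq interior}. I would repeat the proof of Lemma \ref{Lemma Harnack Inequality} directly for $M_r - w$ instead. The upper-bound half of that proof only used $\Delta w \ge -C$ in the interior, which comes from $w \ge c$ together with $h \ge -C$ on bounded gradients. For the weak Harnack half, I would use that $M_r - w$ is a nonnegative supersolution of a uniformly elliptic inequality once $\Delta w$ is bounded above. To obtain such a bound, I would first get a Lipschitz bound for $w$ in a smaller ball from the $C^\alpha$ estimate of Theorem \ref{thm interior equation}, or argue that $w$ is bounded below by $c$ and above by $C$. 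The result should be a symmetric inequality
\begin{equation*}
M_r - m_{r/2} \le C\big[(M_r - M_{r/2}) + r\big].
\end{equation*}
Adding the two inequalities gives $\omega(r) + \omega(r/2) \le C[\omega(r) - \omega(r/2)] + 2Cr$. Hence $\omega(r/2) \le \gamma\,\omega(r) + Cr$ with $\gamma = \frac{C-1}{C+1} < 1$.

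The main obstacle is the leftover error term $Cr$, together with the fact that the statement asserts a clean decay $\omega(r/2) \le \gamma\omega(r)$. I would handle this in one of two ways. The first is to note that $\gamma$ may be taken larger than $1/2$ (enlarge $C$). Iterating then gives $\omega(2^{-k}) \le C\gamma^k$, since the geometric sum of $r$-terms is dominated by $\gamma^k$; this is enough for the H\"older statement that follows. The second is to absorb $Cr$ directly when $\omega(r) \ge r$. When $\omega(r) < r$, the decay needs a separate argument: the equation is uniformly elliptic there, with $w \ge c$ and $|\nabla w|$ controlled, so standard Krylov--Safonov oscillation decay applies. I would present the proof along the lines of \cite{de2021certain} and combine these two cases, accepting that $\gamma$ may be taken as close to $1$ as needed.
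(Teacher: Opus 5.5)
Your first inequality is correct and follows the route the paper has in mind. Applying Lemma \ref{Lemma Harnack Inequality} to $w(rx)/r$ with $\sigma=m_r/r$ gives $M_{r/2}-m_r\le C[(m_{r/2}-m_r)+r]$.

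The step for $M_r-w$, however, is not justified as written, and your description of it has the directions reversed. For $u=M_r-w$, the weak Harnack half needs $\Delta u\le C$, i.e.\ $\Delta w\ge -C$, which you do have from $w\ge c$ and $h\ge -C$. It is the $L^p$-to-$L^\infty$ half that needs an upper bound on $\Delta w$. Since $h$ can grow like $C_h|\nabla w|^2$, such a bound needs gradient control, and bounds $c\le w\le C$ do not supply it. Taking gradient control from Theorem \ref{thm interior equation} is circular: in the paper that estimate is derived from this very lemma, through the H\"older lemma, Lemma \ref{Lemma De Silva Savin} and Corollary \ref{Corollary A priori}.

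The step is also unnecessary. Set $A=M_{r/2}-m_r\le\omega(r)$ and $a=m_{r/2}-m_r$. The one-sided inequality gives $a\ge A/C-r$, hence
\begin{equation*}
\omega(\tfrac r2)=A-a\le\big(1-\tfrac1C\big)A+r\le\big(1-\tfrac1C\big)\omega(r)+r .
\end{equation*}

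The real gap is the error term. Whenever $\omega(r)\le Cr$, the rescaled inequality at scale $r$ holds trivially, because its left side is at most $\omega(r)$. So in that regime it carries no information and cannot yield the clean bound $\omega(r/2)\le\gamma\omega(r)$.

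Your case split does not fill this. Absorbing $Cr$ requires $\omega(r)\ge Kr$ with $K>C/(1-\gamma)$, not $\omega(r)\ge r$. In the complementary regime, the appeal to Krylov--Safonov uses gradient control that is not yet available (the same circularity). Moreover, since $\Delta w=h(\nabla w)/w$ is inhomogeneous, Krylov--Safonov would only give decay up to an additive term of order $r^2\sup_{B_r}|h(\nabla w)/w|$, which again is not absorbed when $\omega(r)$ is that small.

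So what your argument actually establishes is $\omega(r/2)\le\gamma\omega(r)+Cr$. This is also what iterating Lemma \ref{Lemma Harnack Inequality}, the only justification the paper gives, naturally yields. That weaker form, iterated with $\gamma>\frac12$ to get $\omega(2^{-k})\le C(1+\omega(\frac12))\gamma^k$ as in your first option, is exactly what the subsequent H\"older lemma uses. The clean inequality as stated would require a genuinely different argument in the regime $\omega(r)\lesssim r$.
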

Lemma \ref{Lemma Harnack Inequality} and Lemma \ref{Lemma Oscillation Decay} then imply the Holder continuity estimate.
\begin{lemma} [Holder Continuity]
If a positive function $v$ defined on $B_1$ satisfies the Harnack inequality Lemma \ref{Lemma Harnack Inequality} and the oscillation decay Lemma \ref{Lemma Oscillation Decay}, then there exists some $\alpha < c$ such that $v$ is in $C^{\alpha}(B_{\frac{1}{2}})$, with the estimate
\begin{equation*}
||v||_{C^{\alpha}(B_{\frac{1}{2}})} \leq C(1 + v(0)).
\end{equation*}
\end{lemma}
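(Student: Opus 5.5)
The plan is the standard Campanato-type iteration, with the constants tracked carefully. Let $v$ be positive on $B_1$ and satisfy the Harnack inequality of Lemma \ref{Lemma Harnack Inequality} and the oscillation decay of Lemma \ref{Lemma Oscillation Decay}.

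\textbf{Step 1: sup bound.} First bound $\sup_{B_{3/4}} v$ in terms of $v(0)$. Apply Lemma \ref{Lemma Harnack Inequality} with $\sigma = 0$, after covering $B_{3/4}$ by a fixed number of small balls and rescaling. The rescaling is where care is needed. The equation is invariant under $v_r(x) = v(x_0 + rx)/r$, and the constant $1$ in \eqref{eq Harnack} becomes $r$ after scaling back, so it stays bounded. This gives
\begin{equation*}
\sup_{B_{3/4}} v \le C(1+v(0)).
\end{equation*}

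\textbf{Step 2: pointwise modulus at each center.} Fix $x_0 \in B_{1/2}$ and work in $B_{1/4}(x_0)$, where the oscillation decay applies after translation and dilation by $1/4$. Iterating Lemma \ref{Lemma Oscillation Decay} $k$ times gives
\begin{equation*}
\omega(4^{-1}2^{-k}) \le \gamma^k \omega(1/4) \le \gamma^k \cdot 2C(1+v(0)).
\end{equation*}
For $|x-x_0| = \rho$, choose $k$ with $2^{-k-1}/4 < \rho \le 2^{-k}/4$. Setting $\alpha = \log_2(1/\gamma)$, which is a small positive number controlled by the constants of the lemmas, we get $\gamma^k \le C\rho^{\alpha}$. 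Hence $|v(x)-v(x_0)| \le C(1+v(0))\rho^{\alpha}$.

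\textbf{Step 3: assemble the norm.} For pairs with $|x-y| \ge 1/4$, bound the difference quotient by $2\sup v \cdot 4^{\alpha}$. Combining this with Step 2 gives $[v]_{C^{\alpha}(B_{1/2})} \le C(1+v(0))$. Together with the sup bound from Step 1, this yields the stated estimate.

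\textbf{Main obstacle.} The main difficulty is that the additive $1$ in the Harnack inequality breaks pure scale invariance. Oscillation decay must hold at every scale $r \le 1/2$ with the same $\gamma$. If the lemma is applied to rescalings $v_r$, the additive term contributes $O(r)$. That yields a decay of the form $\omega(r/2) \le \gamma\omega(r) + Cr$.

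If Lemma \ref{Lemma Oscillation Decay} is only available in this weakened form, I would handle it by choosing $\alpha < \min(\log_2(1/\gamma), 1)$. An induction then shows $\omega(4^{-1}2^{-k}) \le C(1+v(0))2^{-k\alpha}$. The linear error $Cr$ is absorbed because $2^{-k} \le 2^{-k\alpha}$, and the geometric sum $\sum_j \gamma^{j}2^{-(k-j)}$ is dominated by $2^{-k\alpha}$.
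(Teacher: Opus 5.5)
Your proposal is correct and follows the route the paper intends. The paper gives no separate proof and defers to De Silva--Savin, where the H\"older estimate is obtained as you do: bound $\sup v$ by the Harnack inequality with $\sigma = 0$, then iterate the oscillation decay at every center in $B_{\frac{1}{2}}$. Your handling of a possible $+Cr$ error term, by taking $\alpha < \min\{\log_2(1/\gamma), 1\}$, is a sensible safeguard and does not change the argument.
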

Regarding the optimal regularity of the solution, we recall Lemma 3.8 in \cite{de2021certain}.
\begin{lemma}[De Silva-Savin]\label{Lemma De Silva Savin}
There exists a fixed constant $\eta > 0$ such that if $w$ solves the inequality
\begin{equation*}
|\Delta w| \leq \eta \qquad \text{ when } |\nabla w| \leq \frac{1}{\eta}
\end{equation*}
in $B_1$, and $||w||_{L^{\infty}(B_1)} \leq 1$, then $w \in C^{1,\alpha}(B_\frac{1}{2})$ for some $\alpha \in (0,1)$ depending on $\eta$.
\end{lemma}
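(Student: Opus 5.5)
The plan is a perturbative $C^{1,\alpha}$ argument: since $|\Delta w|$ is small wherever the gradient is not too large, $w$ should stay close to a harmonic function. We first normalize so that $w$ is close to affine at small scales, and then iterate an improvement-of-approximation step on dyadic balls.

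\textbf{Step 1 (initial closeness to harmonic).} Let $v$ be harmonic in $B_{3/4}$ with $v=w$ on $\partial B_{3/4}$; we may take $w$ continuous. We claim that $\|w-v\|_{L^\infty(B_{3/4})}\le \delta(\eta)$ with $\delta(\eta)\to 0$ as $\eta\to0$.

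We argue by compactness. Suppose $w_k$ satisfy the hypothesis with $\eta_k\to0$. The functions $w_k$ are uniformly H\"older: on the set where $|\nabla w_k|\le 1/\eta_k$, the inequality $|\Delta w_k|\le \eta_k\le 1$ holds. Where the gradient is large, we use that at a touching point of a test paraboloid with gradient $p$, $|p|\le 1/\eta$ forces $|\Delta\varphi|$ bounded. Hence $w_k$ is a viscosity solution of an equation of the type treated in Imbert--Silvestre, which gives H\"older estimates with no condition at large gradients. Extract a uniformly convergent subsequence $w_k\to w_\infty$. Any smooth $\varphi$ touching $w_\infty$ has bounded gradient, so $|\nabla\varphi|<1/\eta_k$ for large $k$, and stability of viscosity solutions gives $|\Delta\varphi|\le \eta_k\to0$ at the touching point. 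Thus $w_\infty$ is harmonic, and the claim follows.

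\textbf{Step 2 (improvement of flatness).} We show the following. There are $\rho\in(0,1)$ and $\alpha\in(0,1)$ such that if $|w-\ell|\le r^{1+\alpha}$ in $B_r$ for a linear function $\ell$ with $|\nabla\ell|\le M$, then there is a linear $\ell'$ with $|w-\ell'|\le(\rho r)^{1+\alpha}$ in $B_{\rho r}$ and $|\nabla\ell-\nabla\ell'|\le Cr^{\alpha}$.

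Rescale by setting $\tilde w(x)=(w(rx)-\ell(rx))/r^{1+\alpha}$. This function satisfies $|\Delta\tilde w|\le \eta r^{1-\alpha}\le\eta$ whenever $|\nabla\ell+r^\alpha\nabla\tilde w|\le 1/\eta$. This holds in particular when $|\nabla\tilde w|\le 1/(2\eta r^\alpha)$, provided $M\le 1/(2\eta)$. Since $r\le1$, this region contains $\{|\nabla\tilde w|\le 1/(2\eta)\}$, so the Step 1 hypothesis is preserved with $\eta$ replaced by $2\eta$.

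By Step 1, $\tilde w$ is $\delta$-close to a harmonic function $\tilde v$ with $|\tilde v|\le1$. Harmonic estimates give $|\tilde v-\tilde v(0)-\nabla\tilde v(0)\cdot x|\le C\rho^2$ in $B_\rho$. Choose $\rho$ with $C\rho^2\le\rho^{1+\alpha}/2$, then $\eta$ so small that $\delta\le\rho^{1+\alpha}/2$, and scale back to obtain $\ell'$.

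\textbf{Step 3 (iteration).} Start from $\ell=0$, using $\|w\|_{L^\infty}\le1$ after dividing by a constant. The sum of the gradient increments is $\sum C\rho^{k\alpha}\le C'$, so we need $C'\le 1/(2\eta)$. This holds once $\eta$ is small, since $C'$ depends only on $\rho$ and $\alpha$, and those were fixed before $\eta$. Iterating at every center in $B_{1/2}$ gives pointwise $C^{1,\alpha}$ approximations, hence $w\in C^{1,\alpha}(B_{1/2})$.

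\textbf{Main obstacle.} The delicate point is the compactness in Step 1. Without any control on the equation at large gradients, we need a uniform modulus of continuity for the $w_k$. I would first try to derive it from the observation that the hypothesis makes $w$ a viscosity sub- and supersolution of $\min(|\nabla w|-1/\eta,\ |\Delta w|-\eta)\le0$-type inequalities. Imbert--Silvestre's estimates for equations holding only where the gradient is large, or small, then apply. Failing that, I would run the improvement step directly with a Harnack-type argument in the style of Savin's small-perturbation theory.
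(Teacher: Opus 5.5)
Your Steps 2 and 3 are sound. The rescaling $\tilde w(x)=(w(rx)-\ell(rx))/r^{1+\alpha}$ does preserve the hypothesis (with $2\eta$) as long as $|\nabla\ell|\le 1/(2\eta)$, and fixing $\alpha$, then $\rho$, then $\eta$ is a consistent order of choices. The paper gives no proof of its own here; it only quotes Lemma 3.8 of De Silva--Savin, so the proposal has to stand on its own.

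The gap is Step 1, which is where the real difficulty lies. Imbert--Silvestre treat functions that satisfy an elliptic inequality only where $|\nabla w|\ge\gamma$, with nothing required where the gradient is small. Your hypothesis is the exact reverse: it gives no information at touching points with $|p|>1/\eta$. So their theorem does not apply. Your remark that $|p|\le 1/\eta$ forces $|\Delta\varphi|$ bounded only restates the hypothesis; it says nothing about the large-slope regime, which is exactly what a Krylov--Safonov or Imbert--Silvestre type argument would need to control. As written, you have no uniform modulus of continuity for the $w_k$, hence no convergent subsequence, and Steps 2--3 rest on an unproved claim.

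What you need is an estimate whose proof only uses the equation at points where the test functions have bounded slope.

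\textbf{Ishii--Lions.} The quickest route is doubling of variables. For $x_0$ in a slightly smaller ball, maximize $w(x)-w(y)-L\varphi(|x-y|)-\frac K2(|x-x_0|^2+|y-x_0|^2)$ with $\varphi(t)=t-\kappa t^{3/2}$. The jets produced by the theorem of sums have gradients of size at most $L+K$. So once $\eta<1/(L+K)$, the inequalities $-\eta\le\Delta w\le\eta$ hold there. The usual contradiction follows: $\mathrm{tr}(X-Y)\le 4L\varphi''(|\bar x-\bar y|)$, which is very negative, while it must be at least $-2\eta-2nK$. This gives an interior Lipschitz bound with $L=L(n)$, uniform over all $\eta\le\eta_0$.

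\textbf{Savin's Harnack inequality.} Alternatively, use Savin's Harnack inequality for flat solutions. Its ABP measure estimate slides paraboloids of bounded opening with vertices in a bounded set, so the equation is only needed at points with bounded gradient. It gives H\"older control down to a scale that shrinks as $\eta\to0$, which is still enough for Arzel\`a--Ascoli along $\eta_k\to0$.

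Your fallback sentence points toward the second option but does not carry it out, so compactness remains unproved in the proposal.
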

Now, let us suppose that $w$ solves the interior equation \eqref{eq interior} in $B_1$. Then, it follows from the proof of Lemma \ref{Lemma Harnack Inequality} that $w(0) > c$. Consequently, if $\alpha \in (0,1)$ is such that $w \in C^{\alpha}(B_{\frac{1}{2}})$, we have that, for $\kappa = ||w||_{C^{\alpha}(B_{\frac{1}{2}})}$, the function

\begin{equation*}
\tilde{w} = \frac{w(\rho x) - w(0)}{\kappa \rho^{\alpha}},
\end{equation*}
satisfies the assumptions of Lemma \ref{Lemma De Silva Savin} for $\rho < c$. Indeed, the Holder continuity of $w$ implies that 
\begin{equation*}
\tilde{w} \leq 1.
\end{equation*}
Additionally, invoking assumption \eqref{eq assumption c_h} yields that
\begin{equation*}
\Delta \tilde{w}(x) = \frac{\rho^{2 - \alpha}h(\nabla w(\rho x))}{C} \leq \frac{\rho^{\alpha}|\nabla \tilde{w}(x)|^2}{C}, 
\end{equation*}
which implies that
\begin{equation*}
|\Delta \tilde{w}(x)| \leq C \quad \text{ when } \quad |\nabla \tilde w(x)| \leq c.
\end{equation*}
Hence, we arrive to the following corollary.

\begin{corollary}\label{Corollary A priori}
A solution $w$ to \eqref{eq interior} in $B_1$ is in $C^{1, \alpha}(B_\frac{1}{2})$ for some $\alpha < c$. In particular, $|\nabla w| \leq Cw(0)$ on $B_{\frac{1}{2}}$.
\end{corollary}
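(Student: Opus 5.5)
The plan is to combine the Hölder estimate with Lemma \ref{Lemma De Silva Savin}, applied to a rescaled and normalized version of $w$, and then undo the scaling to get both the $C^{1,\alpha}$ regularity and the gradient bound.

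\textbf{Step 1: a priori bounds at the scale of $w(0)$.} By the proof of Lemma \ref{Lemma Harnack Inequality}, the subsolution barrier $\phi_t$ gives $w \geq c$ in $B_{\frac12}$, so $w(0)\ge c$. Lemma \ref{Lemma Harnack Inequality} with $\sigma=0$ gives $\sup_{B_{\frac12}} w \le C(w(0)+1) \le C w(0)$, where the last inequality uses $w(0)\ge c$ to absorb the constant. Together with the Hölder continuity lemma, this yields $\kappa:=\|w\|_{C^{\alpha_0}(B_{\frac12})}\le C(1+w(0))\le Cw(0)$ for some universal $\alpha_0$.

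\textbf{Step 2: rescale and apply Lemma \ref{Lemma De Silva Savin}.} Set
\begin{equation*}
\tilde w(x)=\frac{w(\rho x)-w(0)}{\kappa\rho^{\alpha_0}},
\end{equation*}
with $\rho$ small and universal. Then $\|\tilde w\|_{L^\infty(B_1)}\le 1$ by the Hölder bound. The function $\tilde w$ satisfies, in the viscosity sense,
\begin{equation*}
\Delta \tilde w(x)=\frac{\rho^{2-\alpha_0}}{\kappa}\,\frac{h(\nabla w(\rho x))}{w(\rho x)},\qquad \nabla w(\rho x)=\kappa\rho^{\alpha_0-1}\nabla\tilde w(x).
\end{equation*}
Since $w\ge c$ and $h$ is continuous, on the region $|\nabla\tilde w|\le 1/\eta$ we have $|\nabla w(\rho x)|\le \kappa\rho^{\alpha_0-1}/\eta$.

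\textbf{Step 3 (main obstacle): making $|\Delta\tilde w|\le\eta$.} The upper bound follows from \eqref{eq assumption c_h}:
\begin{equation*}
\Delta\tilde w\le C\rho^{2-\alpha_0}\kappa^{-1}\,\kappa^2\rho^{2\alpha_0-2}|\nabla\tilde w|^2 = C\kappa\rho^{\alpha_0}|\nabla\tilde w|^2 ,
\end{equation*}
which is at most $C\kappa\rho^{\alpha_0}\eta^{-2}$. The lower bound uses $h\ge -C$ on bounded sets, or more crudely $h(\rho)\ge -C(1+|\rho|^2)$ since $h\in C^1$. This gives
\begin{equation*}
\Delta \tilde w\ge -C\rho^{2-\alpha_0}\kappa^{-1}-C\kappa\rho^{\alpha_0}\eta^{-2}.
\end{equation*}
Both terms are small once $\rho$ is small depending on $\kappa$, hence on $w(0)$. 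If a $\rho$ depending on $w(0)$ is acceptable, the rest is routine. To obtain constants that are independent of $w(0)$, I would first apply the argument to the normalized function $w/w(0)$. The obstacle is that the equation is not homogeneous under multiplication: $v=w/M$ solves $\Delta v = h(M\nabla v)/(M^2 v)$. Assumption \eqref{eq assumption c_h} still gives $h(M\nabla v)/M^2\le C_h|\nabla v|^2$, and the lower bound $h(M p)/M^2 \ge -C/M^2 - C|p|^2$ is also uniform for $M\ge c$. So the family of rescaled equations satisfies the hypotheses uniformly, and $\rho$ can be chosen universal. Checking this uniformity is where I expect most of the care to be needed.

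\textbf{Step 4: conclusion.} Lemma \ref{Lemma De Silva Savin} gives $\tilde w\in C^{1,\alpha}(B_{\frac12})$ with a universal bound. Scaling back gives $w\in C^{1,\alpha}(B_{\rho/2})$ with
\begin{equation*}
|\nabla w|\le C\kappa\rho^{\alpha_0-1}\le Cw(0).
\end{equation*}
A covering argument, applying the same reasoning around each point of $B_{\frac12}$ in a slightly smaller ball and using Step 1 to compare $w$ at that point with $w(0)$, then yields the estimate on all of $B_{\frac12}$.
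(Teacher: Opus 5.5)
Your proposal is correct and takes essentially the paper's route: the same H\"older-normalized rescaling $\tilde w=(w(\rho x)-w(0))/(\kappa\rho^{\alpha})$, the upper bound on $\Delta\tilde w$ from \eqref{eq assumption c_h} together with $w\ge c$, and an application of Lemma \ref{Lemma De Silva Savin}. Your extra normalization by $w(0)$ makes explicit something the paper glosses over: its bound on $\Delta\tilde w$ actually carries a factor $\kappa$, so $\rho$ is universal only after normalizing. Two details should be recorded there: $w/w(0)\ge c$ on $B_{\frac12}$ follows from Lemma \ref{Lemma Harnack Inequality} combined with $w\ge c$, not from $w\ge c$ alone; and the global bound $h\ge -C$ comes from $h>0$ outside the bounded set $\bar D$, not from $h\in C^1$.
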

Since it is known that $w \geq c$ on $B_{\frac{1}{2}}$, Schauder estimates now imply that a solution $w$ to $\eqref{eq interior}$ in $B_1$ is in $C^{2,\alpha}(B_{\frac{1}{2}})$ for any $\alpha \in (0,1)$, along with the estimate 
\begin{equation}\label{eq interior regularity}
||w||_{C^{2,\alpha}(B_{\frac{1}{2}})} \leq C||w||_{L^{\infty}(B_1)}.
\end{equation}
With this regularity estimate in mind, we may show that there exists a solution to \eqref{eq interior} in $B_1$ that agrees with any non-negative boundary datum $\phi \in C(\partial B_1)$.  Even though the proof is similar in spirit and is inspired by the one done for the degenerate one-phase problem arising from the Alt-Phillips equation for positive power potentials studied in \cite{de2021certain}, the sign change introduces significant technical complications. Hence, the proof is first done in the case where $\phi$ is strictly positive; the general case is then handled by approximation. Before the proof, we construct families of subsolutions and supersolutions that satisfy the interior equation \eqref{eq interior} in their positivity set and match any non-negative Holder boundary datum. These will be used, in particular, to show that the approximation argument in the proof of existence includes the convergence of the approximate solutions at the boundary.

\begin{lemma}\label{lemma sup and sub interior}
Given any positive $\phi \in C^{\alpha}(\partial B_1)$, there exists a continuous family of supersolutions and subsolutions of \eqref{eq interior} in $B_1$ that agree with $\phi$ on $\partial B_1$.
\end{lemma}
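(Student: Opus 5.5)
The plan is to build barriers pointwise at each boundary point $x_0 \in \partial B_1$ and then take the infimum (for supersolutions) or the supremum (for subsolutions) over $x_0$. Since $\phi \geq c_\phi > 0$ is $C^\alpha$, the problem is locally nondegenerate near $\partial B_1$. The equation is uniformly elliptic with right-hand side $h(\nabla u)/u$, which is bounded as long as $u$ stays bounded below and $\nabla u$ stays bounded. So we can essentially mimic the classical Perron barrier construction for the Laplacian. The one new ingredient is a correction term that absorbs the bounded right-hand side.

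\textbf{Supersolutions.} For each $x_0 \in \partial B_1$, consider
\begin{equation*}
U_{x_0}(x) = \phi(x_0) + M|x - x_0|^{\alpha} + A\,(1 - |x|^2),
\end{equation*}
or the smoother variant with $|x-x_0|^\alpha$ replaced by $(|x-x_0|^2+\delta)^{\alpha/2}$. By the Hölder continuity of $\phi$ we have $U_{x_0} \geq \phi$ on $\partial B_1$, with equality at $x_0$.

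To verify that $U_{x_0}$ is a supersolution, note first that $U_{x_0} \geq \min \phi > 0$. Next, $\Delta |x - x_0|^\alpha = \alpha(\alpha + n - 2)|x-x_0|^{\alpha-2}$ can be positive, so instead I will use the superharmonic power $-|x-x_0|^{\alpha}$ composed appropriately. Concretely, I will take the classical exterior-ball barrier: a function of $|x - y_0|$, with $y_0 = 2x_0$, that is strictly concave, increasing and radial. The requirement is $\Delta U \leq h(\nabla U)/U$. Here $h \geq \min h$ is bounded below on bounded sets of gradients, and $U$ is bounded above, so it suffices to have $\Delta U \leq -K$ for a large constant $K$ wherever the gradient stays bounded. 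The term $-A|x|^2$ supplies $\Delta = -2nA$, but it also enlarges the gradient, and $h$ then becomes positive. This is harmless, since $h(\nabla U)/U > 0 \geq \Delta U$ in that region.

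The dangerous region is near $x_0$, where the Hölder-type term has an unbounded gradient and $h$ is large positive there (since $|\nabla U|$ is large). This actually helps supersolutions. For subsolutions, however, it is exactly the obstacle.

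\textbf{Subsolutions.} Take
\begin{equation*}
V_{x_0}(x) = \phi(x_0) - M\,\psi(|x - x_0|),
\end{equation*}
truncated below by a small positive constant through a maximum with the bump subsolution $c(\tfrac12 - |x|^2)_+$-type functions from the proof of Lemma \ref{Lemma Harnack Inequality}, or simply through $\max(V_{x_0}, \tfrac12\min\phi)$ in a neighborhood. A maximum of subsolutions is again a subsolution. The requirement is $\Delta V \geq h(\nabla V)/V$. Assumption \eqref{eq assumption c_h} gives $h(\nabla V)/V \leq C_h|\nabla V|^2/V$, so it suffices to have
\begin{equation*}
M\Delta(-\psi) \geq C_h M^2|\psi'|^2 / V.
\end{equation*}
This calls for the exponential trick: write $V = \phi(x_0) e^{-g}$ with $g$ vanishing at $x_0$. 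Then
\begin{equation*}
V\Delta V - |\nabla V|^2 = V^2(-\Delta g),
\end{equation*}
and since $C_h < 1$ we obtain $V\Delta V \geq C_h|\nabla V|^2$ whenever $-\Delta g \geq 0$, that is, whenever $g$ is superharmonic. So I choose $g$ superharmonic, vanishing at $x_0$, positive elsewhere on $\overline{B_1}$, and growing like $|x - x_0|^\alpha$ near $x_0$. A suitable choice is $g(x) = M\big(|x-y_0|^{2-n}$-type term$\big)$ adjusted, or $M|x - x_0|^{\alpha}$ restricted to a cone-like domain. The Hölder condition on $\phi$ then gives $V_{x_0} \leq \phi$ on $\partial B_1$ for $M$ large.

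\textbf{Families and the main obstacle.} Define $\overline{U} = \inf_{x_0} U_{x_0}$ and $\underline{V} = \sup_{x_0} V_{x_0}$. These are respectively a viscosity supersolution and subsolution, they are continuous by the uniform Hölder modulus in $x_0$, and both agree with $\phi$ on $\partial B_1$. The phrase ``continuous family'' is obtained by the interpolations $\overline{U}+t(1-|x|^2)$ and $\underline{V}-t(1-|x|^2)$, respectively small $t$-perturbations, which preserve the strict inequalities.

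I expect the main obstacle to be the subsolution near $x_0$. There the gradient of $V$ blows up and $h(\nabla V) > 0$ works against us, and the fix is precisely to use the exponential representation together with $C_h < 1$. A secondary difficulty is making $g$ simultaneously superharmonic and Hölder-steep at $x_0$. If the pure power fails, I will use the standard barrier $g = M\big(\delta^{-\beta} - |x - y_0|^{-\beta}\big)^{\alpha'}$ with exterior ball center $y_0 = (1+\delta)x_0$, and compose it with a concave power to retain superharmonicity.
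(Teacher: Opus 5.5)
Your construction is correct in substance, but it takes a different route from the paper's, most visibly for the subsolutions.

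\textbf{How the two routes differ.} The paper works only with one-dimensional profiles in the normal direction.
\begin{itemize}
\item For the supersolution, $\phi(x_0)+C_{\mathrm{sup}}|(x-x_0)\cdot\nu_{x_0}|^{\alpha/2}$ is superharmonic. Its gradient is so large throughout $B_1$ that it lies outside $\bar D$, so $h>0$ and no correction term is needed.
\item For the subsolution $\phi(x_0)-C_{\mathrm{sub}}|(x-x_0)\cdot\nu_{x_0}|^{\beta}$, the paper compares $\Delta\varphi\sim x_n^{\beta-2}$ with $C_h|\nabla\varphi|^2/\delta\sim x_n^{2\beta-2}$. This forces the truncation $\max\{\cdot,\delta\}$ (a subsolution since $h(0)<0$) and a small $\beta$ depending on $C_h$ and $\max\phi/\delta$.
\end{itemize}
Your substitution $V=\phi(x_0)e^{-g}$ replaces all of this by the identity $V\Delta V-|\nabla V|^2=-V^2\Delta g$. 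Any superharmonic $g$ then gives a globally positive subsolution, with no truncation and no small exponent. Boundary matching reduces to H\"older continuity of $\log\phi$. The cost is that you genuinely use $C_h\le 1$, which is harmless since it is a standing assumption; the paper's subsolution works for any finite $C_h$. Your supersolution device, adding $A(1-|x|^2)$ with $2nA\ge|\min_{\bar D}h|/\min\phi$, is a valid alternative to the paper's ``gradient outside $D$'' observation. What you need there is the lower bound $U\ge\min\phi$, not an upper bound on $U$.

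\textbf{What has to be fixed.} Several of your candidate barriers must be discarded.
\begin{itemize}
\item For $n\ge 2$, $|x-x_0|^\alpha$ is subharmonic with $\Delta\sim|x-x_0|^{\alpha-2}$. Nothing in the hypotheses on $h$ can absorb this: even for $h=c(|\rho|^2-1)$ one only gets $h(\nabla U)/U\sim|x-x_0|^{2\alpha-2}$. So it fails both in your first $U_{x_0}$ and as a choice of $g$.
\item ``Strictly concave, increasing, radial'' does not imply superharmonic; you need $G''+\frac{n-1}{r}G'\le 0$.
\item More importantly, any barrier built from a ball or plane touching at $x_0$ has its zero level set tangent to $\partial B_1$. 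Along $\partial B_1$ it therefore only grows like $|x-x_0|^2$, so the concave power must be at most $\alpha/2$. The H\"older matching must then be redone for the supersolution, where you only checked it for the discarded $|x-x_0|^\alpha$.
\end{itemize}
The cleanest choice is $g_{x_0}(x)=(1-x\cdot x_0)^{\alpha/2}=|(x-x_0)\cdot\nu_{x_0}|^{\alpha/2}$, which is the paper's own profile. It is superharmonic in $B_1$ and equals $2^{-\alpha/2}|x-x_0|^\alpha$ on $\partial B_1$. You can use it in both $\phi(x_0)+Mg_{x_0}+A(1-|x|^2)$ and $\phi(x_0)e^{-Mg_{x_0}}$.

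Finally, the ``continuous family'' that the subsequent Remark and Corollary actually use is a monotone family tending to $+\infty$, respectively to $0$, in $B_1$. Your construction gives this for free by letting $A\to\infty$ and $M\to\infty$, since $g_{x_0}(x)\ge(1-|x|)^{\alpha/2}$. The small-$t$ perturbations you propose do not sweep in this way, and for $\underline V-t(1-|x|^2)$ they would need a separate argument.
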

\begin{proof}
We first define a family of supersolutions. For a positive constant $C_{\mathrm{sup}}$, define
\begin{equation*}
\psi_{C_{\mathrm{sup}}, \phi}(x) = \inf_{x_0 \in \partial B_1} \phi(x_0) + C_{\mathrm{sup}}|(x - x_0) \cdot \nu_{x_0}|^{\frac{\alpha}{2}},
\end{equation*}
where $\nu_{x_0}$ denotes the inner pointing normal to $B_1$ at $x_0$. Now, we check that $\psi_{C_{\mathrm{sup}}, \phi}$ is a supersolution to the interior equation \eqref{eq interior} for $C_{\mathrm{sup}} \geq C$. To do so, it suffices to check that the function
\begin{equation*}
\psi_{x_0}(x) = \phi(x_0) + C_{\mathrm{sup}}x_n^{\frac{\alpha}{2}}
\end{equation*}
is a supersolution to the interior equation in the set $\{x_n > 0\}$. We compute that
\begin{equation*}
\Delta \psi_{x_0}(x) = C_{\mathrm{sup}}\frac{\alpha}{2}(\frac{\alpha}{2} - 1)x_{n}^{\frac{\alpha}{2} - 2} < 0
\end{equation*}
while
\begin{equation*}
|\nabla \psi_{x_0}(x)| = C_{\mathrm{sup}}\frac{\alpha}{2}(x_n)^{\alpha - 1},
\end{equation*}
and so $h(\nabla \psi_{x_0}) \notin D$ for $C_{\mathrm{sup}} \geq C$. This shows that $\psi_{C_{\mathrm{sup}}, \phi}$ is a supersolution to the interior equation \eqref{eq interior}. Taking $C_{\mathrm{sup}}$ large enough (depending on the modulus of continuity of $\phi$) also ensures that $\psi$ agrees with $\phi$ on $\partial B_{1}$.

We now turn to the family of subsolutions. For a positive constant $C_{\mathrm{sub}}$ and a small parameter $\beta > 0$, we define
\begin{equation*}
\varphi_{C_{\mathrm{sub}}, \phi}(x) = \max \{\sup_{x_0 \in \partial B_1} \phi(x_0) - C_{\mathrm{sub}}|(x - x_0) \cdot \nu_{x_0}|^{\beta}, \delta\},
\end{equation*}
where $\delta$ is chosen so that $\delta < \min_{x \in \partial B_1} \phi(x)$. To check that $\varphi_{C_{\mathrm{sub}}, \phi}$ is a subsolution of the interior equation \eqref{eq interior} for $C_{\mathrm{sub}} \geq C$, we first note that any positive constant is a subsolution due to the assumption $h(0) < 0$. Next, it suffices to check that the function 
\begin{equation*}
\varphi_{x_0} = \phi(x_0) - C_{\mathrm{sub}}x_n^{\beta}
\end{equation*}
is a subsolution to the interior equation \eqref{eq interior} in the set $\{x_n > 0\}$ as long as $\varphi_{x_0} > \delta$. We compute that
\begin{equation*}
\Delta \varphi_{x_0}(x) = C_{\mathrm{sub}}\beta(1 - \beta)x_n^{\beta - 2},
\end{equation*}
while
\begin{equation*}
\frac{h(\nabla \varphi_{x_0})}{\varphi_{x_0}} \leq \frac{C_h|\nabla \varphi_{x_0}|^2}{\delta} = \frac{C_h C_{\mathrm{sub}}^2\beta^2 x_n^{2\beta - 2}}{\delta}.
\end{equation*}
By a direct computation, we may guarantee that $\varphi_{x_0}$ is a subsolution to the interior equation \eqref{eq interior} as long as
\begin{equation*}
\beta C_{\mathrm{sub}}x_n^{\beta} \leq \frac{(1 - \beta)\delta}{C_h \beta}.
\end{equation*}
Then, notice that
\begin{equation*}
\{\varphi_{x_0} > \delta\} = \{\beta C_{\mathrm{sub}}x_n^{\beta} < \beta(\phi(x_0) - \delta)\}.
\end{equation*}
Taking $\beta$ sufficiently small as a function of $C_{\mathrm{sub}}$ and $\delta$ then ensures that $\varphi_{x_0}$ will be a subsolution to \eqref{eq interior} in the set $\{\varphi_{x_0} > \delta\}$. 

We may then conclude that $\varphi_{C_{\mathrm{sub}}, \phi}(x)$ is a subsolution to \eqref{eq interior}. Additionally, since 
\begin{equation*}
\varphi_{C_{\mathrm{sub}}, \phi}(x) = \sup_{x_0 \in \partial B_1} \phi(x_0) - C_{\mathrm{sub}}|(x - x_0) \cdot \nu_{x_0}|^{\beta} \quad \text{ in a neighborhood of } \partial B_1,
\end{equation*}
it follows that we may choose $C_{\mathrm{sub}}$ depending on the modulus of continuity of $\phi$ so that $\varphi = \phi$ on $\partial B_1$.  
\end{proof}
\begin{remark}
Notice that the family of supersolutions defined in Lemma \ref{lemma sup and sub interior} is monotone increasing in the sense that $\psi_{C_{\mathrm{sup}}, \phi}(x) \rightarrow \infty$ as $C_{\mathrm{sup}} \rightarrow \infty$ for any $x \in B_1$. Likewise, the family of subsolutions is monotone decreasing in the sense that $\varphi_{C_{\mathrm{sub}}, \phi}(x) \rightarrow 0$ as $C_{\mathrm{sub}} \rightarrow \infty$. Hence, any solution $w$ of the interior equation \eqref{eq interior} that agrees with a given boundary datum $\phi$ will satisfy $\varphi_{C_{\mathrm{sub}}, \phi} < w < \psi_{C_{\mathrm{sup}}, \phi}$ for any $C_{\mathrm{sub}}, C_{\mathrm{sup}}$ chosen so that $\varphi_{C_{\mathrm{sub}}, \phi}$ and $\psi_{C_{\mathrm{sup}}, \phi}$ are a subsolution and a supersolution, respectively. Otherwise, we may increase $C_{\mathrm{sup}}$ and $C_{\mathrm{sub}}$ so that a member of the family of supersolutions and subsolutions touches $w$ by above or below (respectively) at some interior point. 
\end{remark}
Now, we may show that given a non-negative boundary datum $\phi$, we may solve the Dirichlet problem  for the interior equation \eqref{eq interior} in the unit ball with $\phi$ as the given boundary datum. We first treat the case where $\phi$ is Holder continuous and strictly positive, and we then handle the general case by approximation.
\begin{proposition}\label{prop existence positive data}
Let $\phi \in C^{\alpha}(\partial B_1)$ be a strictly positive boundary datum. Then, there exists a continuous function $w$ that solves the Dirichlet problem
\begin{align*}
&\Delta w = \frac{h(\nabla w)}{w} \qquad &\text{ in } B_1 \\
&w = \phi \qquad &\text{ on } \partial B_1
\end{align*}
in the viscosity sense.
\end{proposition}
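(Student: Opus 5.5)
We prove Proposition \ref{prop existence positive data} by Perron's method, using the barriers of Lemma \ref{lemma sup and sub interior} and the a priori estimates above. Fix $C_{\mathrm{sub}}, C_{\mathrm{sup}}$ so that $\varphi := \varphi_{C_{\mathrm{sub}},\phi}$ is a subsolution, $\psi := \psi_{C_{\mathrm{sup}},\phi}$ is a supersolution, and both equal $\phi$ on $\partial B_1$. By construction $\varphi \geq \delta > 0$ in $B_1$.

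First we check that $\varphi \leq \psi$. The supersolution $\psi$ is bounded below by $\min \phi > \delta$. At the point where it is attained, $\sup_{x_0}\phi(x_0) - C_{\mathrm{sub}}|(x-x_0)\cdot\nu_{x_0}|^{\beta}$ is at most $\phi(\bar x_0) - C_{\mathrm{sub}}|\cdot|^\beta$. Comparing this with $\phi(x_0') + C_{\mathrm{sup}}|\cdot|^{\alpha/2}$ pointwise uses the Hölder modulus of $\phi$ and the choice of constants. If this direct comparison is awkward, we can simply enlarge $C_{\mathrm{sup}}$, which only increases $\psi$.

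Next define
\begin{equation*}
\mathcal{S} = \{v \in C(\overline{B_1}) : v \text{ is a subsolution of \eqref{eq interior}},\ \varphi \leq v \leq \psi\}
\end{equation*}
and set $w = \sup_{v \in \mathcal{S}} v$. The class is nonempty because $\varphi \in \mathcal{S}$. Every $v \in \mathcal{S}$ satisfies $v \geq \delta > 0$, so the equation is nondegenerate on the range of interest: $F(p,X,z) = \mathrm{tr}X - h(p)/z$ is continuous for $z \geq \delta$ and increasing in $z$ wherever $h(p) > 0$.

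We then follow the standard Perron steps.
\begin{enumerate}
\item[(i)] The upper semicontinuous envelope $w^*$ is a subsolution, since suprema of subsolutions are subsolutions by stability of viscosity inequalities.
\item[(ii)] The lower envelope $w_*$ is a supersolution. If not, the bump construction gives a contradiction: at a point where a quadratic $P$ touches $w_*$ from below with $F(D^2P,\nabla P,P) > 0$, the function $\max(w, P + \eta - \epsilon|x-\bar x|^2)$ is still in $\mathcal{S}$ and exceeds $w$ somewhere. This uses $w_* \geq \delta$, so that $h(\nabla P)/P$ is continuous in the perturbation. It also uses $\psi$ being a supersolution, so that the bump stays below $\psi$, or more precisely $w_* < \psi$ at the point, which follows from the strong comparison at a touching point. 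Positivity of $w$ also makes the constant-shift argument legitimate even though $F$ is not monotone in $z$ everywhere.
\item[(iii)] Since $\varphi \leq w \leq \psi$ and both barriers equal $\phi$ on $\partial B_1$ continuously, $w$ attains $\phi$ continuously at the boundary.
\end{enumerate}

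The main obstacle is continuity of $w$, i.e.\ $w^* = w_*$, since no comparison principle for \eqref{eq interior} has been established and uniqueness is deferred. To avoid needing comparison, we use the a priori regularity instead. Replace $\mathcal{S}$ by its image under local ``harmonic-type'' replacements. In small balls $B \Subset B_1$ where $w \geq \delta$, solve the Dirichlet problem with the previously obtained subsolution values. This can be done because the equation is uniformly elliptic with bounded right-hand side for bounded gradients, via a fixed point/continuity method using the $C^{2,\alpha}$ estimate \eqref{eq interior regularity} and Corollary \ref{Corollary A priori}.

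Alternatively, and more simply, we observe that $w_*$ is a supersolution and $w^*$ is a subsolution, both bounded between $\delta$ and $\|\psi\|_\infty$. The Harnack inequality (Lemma \ref{Lemma Harnack Inequality}) and the oscillation decay (Lemma \ref{Lemma Oscillation Decay}) hold for the Perron envelope after the Perron lift, because their proofs use only the one-sided differential inequalities. This gives an interior Hölder modulus and hence $w^* = w_*$.

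If this step resists, the fallback is to prove a comparison principle for strictly positive solutions with $\inf > 0$. Use the transformation $v = \log w$ (or $w^{p}$), which turns $\Delta w = h(\nabla w)/w$ into an equation whose zeroth-order dependence is monotone. Then apply the usual doubling-of-variables argument, which gives $w^* \leq w_*$ directly.
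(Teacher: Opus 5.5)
You set up the Perron framework correctly: barriers, $w^*$ a subsolution, the bump argument for $w_*$, and boundary values from $\varphi\le w\le\psi$. But the step you yourself single out as the main obstacle, continuity of the envelope, is never established, and none of your three remedies works.

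\emph{Remedy (1)} is circular. After a Lipschitz rescaling, solving the Dirichlet problem in small balls with merely continuous data is the proposition itself. The bounds in Corollary \ref{Corollary A priori} and \eqref{eq interior regularity} are interior only. A continuity method would need estimates up to the boundary, and invertibility of a linearization whose zeroth-order coefficient $h(\nabla w)/w^2$ has the wrong sign where $h>0$. Without comparison, nothing even guarantees that the lift lies above the subsolution it replaces.

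\emph{Remedy (2)} misreads the proof of Lemma \ref{Lemma Harnack Inequality}. Superharmonicity of $(w-\sigma)^p$ and the bump argument giving $w\ge c$ use the supersolution inequality, while $\Delta w\ge -C$ uses the subsolution inequality, so both must hold for one and the same function. $w_*$ and $w^*$ each satisfy only one of them. The oscillation decay of Lemma \ref{Lemma Oscillation Decay} rests on $(w-m)+(M-w)=M-m$, which is exactly the statement $w^*=w_*$. If instead you mean to apply the estimates to lifted members of your class, you are back to the circularity of (1).

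\emph{Remedy (3)} fails in this generality. With $u=\log w$ one gets $\Delta u+|\nabla u|^2=e^{-2u}h(e^{u}\nabla u)$, and
$\partial_u\big(e^{-2u}h(e^{u}p)\big)=e^{-2u}\big(\nabla h(q)\cdot q-2h(q)\big)$ with $q=e^{u}p$. This has no sign under Assumption \ref{assumption standard assumption} and \eqref{eq assumption c_h}. For instance, it must be negative somewhere whenever $h$ is bounded outside $D$, since then $h(tq)/t^2\to 0$ along rays. No comparison principle is available under the standing hypotheses, which is why the paper proves uniqueness only under the extra assumption that $t\mapsto h(t\nu)$ is increasing.

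The missing idea is to restrict the Perron class to an equicontinuous subfamily by a Jensen-type regularization. The paper does this with $w$ defined as the infimum of supersolutions having data $\phi$.
\begin{itemize}
\item For a supersolution $u_1$, the inf-convolution $u_2(x)=\inf_{y}\,[u_1(y)+C_0|x-y|^{\beta}]$ is $C^{\beta}$ with constant $C_0$ and lies below $u_1$.
\item The two barriers of Lemma \ref{lemma sup and sub interior} ensure that $u_2$ keeps the boundary data. They also force the minimizing point $y_0=te_n$ to be interior when $\beta<\alpha/2$ and $C_0$ is large.
\item At a point where a polynomial $P$ touches $u_2$ from below, $|\nabla P|=C_0\beta t^{\beta-1}$ is large, so $\nabla P\notin D$ and $h(\nabla P)>0$.
\item The concavity of $t^{\beta}$ yields a gain of order $C_0\beta(1-\beta)t^{\beta-2}$ in the Laplacian of the test function for $u_1$. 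This beats the error $C_hC_0^2\beta^2t^{2\beta-2}/\inf u_1$ coming from \eqref{eq assumption c_h} once $\beta$ is small, because $C_0t^{\beta}\le u_1(0)-u_1(y_0)$ is bounded.
\item Hence $\Delta P<0<h(\nabla P)/P$, so $u_2$ is again in the class.
\end{itemize}
The infimum over these uniformly H\"older functions is therefore continuous. It is a supersolution by stability and a subsolution by the bump argument.

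In your sub-solution formulation, the symmetric sup-convolution $\sup_{y}\,[v(y)-C_0|x-y|^{\beta}]$ works by the same computation. Your class already gives the uniform bounds $\delta\le v\le\|\psi\|_{L^\infty}$, and $\psi$ is $C^{\alpha/2}$, so the regularized function stays below $\psi$. This would close your gap.
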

\begin{proof}
We define
\begin{equation*}
\mathcal{A} = \{u \in C(\overline{B_1}): u \text{ is a supersolution to \eqref{eq interior} and $u = \phi$ on $\partial B_1$}\}.
\end{equation*}
We will show that the function 
\begin{equation*}
w(x) = \inf_{u \in \mathcal{A}} u(x)
\end{equation*}
is the desired solution of the Dirichlet problem. To do so, we must first show that $w$ is continuous, which requires us to prove that the minimization can be restricted to a subset of $\mathcal{A}$ that has a uniform modulus of continuity. 

To such end, given $u_1 \in \mathcal{A}$, we construct another supersolution $u_2 \leq u_1$ such that $u_2 \in \mathcal{A}$ and $u_2 \in C^{\beta}(B_1)$ for some $0 < \beta < c$. We do this by using the following Jensen's inf-convolution (see chapter 5 in \cite{caffarelli1995fully})

\begin{equation*}
u_2(x) = \inf_{y \in \overline{B_1}} u_1(y) + C_0|x-y|^{\beta}, 
\end{equation*}
\noindent
where $\beta \leq c$ and $C_0 \geq C$ are parameters to be fixed later. Trivially, we see that $u_2 \leq u_1$; in particular, $u_2 \leq \phi$ on $\partial B_1$. In order to show that $u_2 \geq \phi$ on $\partial B_1$, let $C_{\mathrm{sub}}$ be a positive constant so that the function $\varphi_{C_{\mathrm{sub}}, \phi}$, which was defined in Lemma \ref{lemma sup and sub interior}, is a subsolution of the interior equation \eqref{eq interior}. Then, since $u_1 \geq \varphi_{C_{\mathrm{sub}}, \phi}$ and 
\begin{equation*}
\varphi_{C_{\mathrm{sub}}, \phi}(x) \geq \phi(x_0) - C_{\mathrm{sub}}'|x - x_0|^{\frac{\alpha}{2}},    
\end{equation*}
for some constant $C_{\mathrm{sub}}'$, then for $y \in B_1$, $x_0 \in \partial B_1$

\begin{equation*}
u_1(y) + C_0|x_0 - y|^{\beta} \geq \phi(x_0) + C_0|x_0 - y|^{\beta} - C_{\mathrm{sub}}'|x_0 - y|^{\frac{\alpha}{2}} \geq \phi(x_0).
\end{equation*}
\noindent
Hence, as long as we choose $\beta < \frac{\alpha}{2}$ and $C_0$ large enough depending on $C_{\mathrm{sub}}'$, we can guarantee that $u_2 = \phi$ on $\partial B_1$.

Now, we will show that $u_2$ is a supersolution to the interior equation \eqref{eq interior}. Suppose that $P$ is a polynomial that touches $u_2$ by below at $x_0 \in B_1$. Let $y_0 \in \overline{B_1}$ be such that

\begin{equation*}
u_2(x_0) = u_1(y_0) + C_0|x_0 - y_0|^{\beta}.
\end{equation*}
We must first check that $y_0 \notin \partial B_1$; assume, by way of contradiction, that $y_0 \in \partial B_1$. Then, we let $C_{\mathrm{sup}}$ be a constant such that the function $\psi_{C_{\mathrm{sup}}, \phi}$ defined in Lemma \ref{lemma sup and sub interior} is a supersolution to the interior equation. Then, since 
\begin{equation*}
\psi_{C_{\mathrm{sup}}, \phi}(x_0) \leq \phi(y_0) + C_{\mathrm{sup}}'|x - x_0|^{\frac{\alpha}{2}}, 
\end{equation*}
for some constant $C_{\mathrm{sup}}'$, we get that
\begin{equation*}
\phi(y_0) + C_0|x_0 - y_0|^{\beta} = u_1(y_0) + C_0|x_0 - y_0|^{\beta} = u_2(x_0) \leq u_1(x_0) \leq \phi(y_0) + C_{\mathrm{sup}}'|x_0 - y_0|^{\frac{\alpha}{2}},
\end{equation*}
which yields a contradiction provided we take $\beta < \frac{\alpha}{2}$ and $C_0$ large enough depending on $C_{\mathrm{sup}}'$. 

Hence, without loss of generality, we may take $x_0 = 0$ and $y_0 = te_n$ for some $t \in (0,1)$. Then, we have that

\begin{equation*}
u_2(0) = u_1(te_n) + C_0|te_n|^{\beta}.
\end{equation*}
Since in general

\begin{equation*}
u_2(x) \leq u_1(te_n) + C_0|x - te_n|^{\beta},
\end{equation*}
we may immediately deduce that

\begin{equation*}
|\nabla P(0)| = C_0\beta t^{\beta - 1}.
\end{equation*}
\noindent
Given that $t^{\beta - 1} \geq c$, we may then take $C_0$ large enough as a function of $\beta$ so that $\nabla P(0) \notin D$.

Now, letting $y = (y', y_n)$ for $y' \in \mathbb{R}^{n-1}$ and $y_n \in \mathbb{R}$, the function

\begin{equation*}
v(y) = P(y', 0) - C_0|y_n|^{\beta}
\end{equation*}
\noindent
touches $u_1$ by below at $te_n$. Furthermore, notice that

\begin{equation*}
\Delta v(0) = \Delta P(te_n) + C_0(\beta)(1-\beta)|y_n - t|^{\beta-2} - P_{nn}(te_n). 
\end{equation*}
\noindent
The touching then yields 
\begin{align*}
&|\nabla u_1(0)| = C_0\beta t^{\beta - 1} \\
&\Delta u_1(0) \geq \Delta P(te_n) + 2C_0\beta(1 - \beta)t^{\beta - 2} \\
& P_{nn}(te_n) \leq C_0\beta (1-\beta)|t|^{\beta}.   
\end{align*}
\noindent
Additionally, we have that for $C_1 = \inf_{y \in B_1} u(y)$
\begin{equation*}
\Delta u_1(0) \leq \frac{h(\nabla u_1(0))}{u_1(0)} \leq \frac{C_h|\nabla u_1(0)|^{2}}{u_1(0)} \leq \frac{C_0^{2}C_h\beta^2 t^{2\beta - 2}}{C_1}. 
\end{equation*}
\noindent
Consequently, we get that
\begin{align*}
\Delta P(te_n) &\leq \frac{C_hC_0^{2}\beta^2 t^{2\beta - 2}}{C_1} - 2C_0\beta(1 - \beta)t^{\beta - 2} \\
&=\beta C_0t^{\beta - 2}[\frac{C_hC_0\beta t^{\beta} - 2(1 - \beta)C_1}{C_1}].
\end{align*}
Then, we notice that since
\begin{equation*}
C_0t^{\beta} = u_2(x_0) - u_1(y_0) \leq u_1(x_0) - u_1(y_0) \leq ||u_1||_{L^{\infty}(B_1)} \leq C, 
\end{equation*}
we may then take $\beta < c$ so that 
\begin{equation*}
\Delta P(te_n) < 0. 
\end{equation*}
Thus,
\begin{equation*}
\Delta P(te_n) \leq \frac{h(\nabla P(te_n))}{P(te_n)}.    
\end{equation*}
\noindent
Hence, we can conclude that 
\begin{equation*}
w(x) = \inf_{u \in A} u(x) 
\end{equation*}
\noindent
is a continuous supersolution. To show that it is also a subsolution, let us suppose by contradiction that there exists a polynomial $P$ that touches $w$ strictly by above at $x_0$ such that 
\begin{equation*}
\Delta P(x_0) < \frac{h(\nabla P(x_0))}{P(x_0)}.
\end{equation*}
\noindent
Then, the function
\begin{equation*}
\bar{w} = 
\begin{cases}
w \quad \text{ for $x \in B_{1} \setminus \bar{B}_{\rho}$} \\
\min\{P - c|x - x_0|^{\beta_0}, w\} \quad \text{ for $x \in B_{\rho}$}
\end{cases}
\end{equation*}
is, for any choice of $\beta_0 < 1$, a supersolution that is strictly less than $w$ for $\rho$ small enough, a contradiction.  
\end{proof}
\begin{corollary}
Let $\phi \in C(\partial B_1)$ be a non-negative boundary datum. Then, there exists a solution of the interior equation \eqref{eq interior} that agrees with $\phi$ on $\partial B_1$. 
\end{corollary}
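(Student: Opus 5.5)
The plan is to approximate $\phi$ by strictly positive Hölder data, solve each approximate problem with Proposition \ref{prop existence positive data}, and pass to the limit. Concretely, choose $\phi_k \in C^{\alpha}(\partial B_1)$ with $\phi_k > 0$ and $\phi_k \to \phi$ uniformly on $\partial B_1$. For instance, take a mollification of $\phi$ along the sphere plus $1/k$. Since the $\phi_k$ come from a fixed continuous function, they share a common modulus of continuity, and $\|\phi_k\|_{L^\infty} \le \|\phi\|_{L^\infty}+1$. Let $w_k$ be the solution given by Proposition \ref{prop existence positive data} with boundary datum $\phi_k$.

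\textbf{Uniform bounds.} We need estimates on the $w_k$ that are independent of $k$.
\begin{itemize}
\item[(a)] An $L^\infty$ bound. Compare $w_k$ with the constant $M+C$, where $M=\|\phi_k\|_\infty$. Since $h(0)<0$, constants are subsolutions, so they are not directly usable as supersolutions. We therefore use a supersolution of the form $M + C(1-|x|^2)^{\beta}$ or a paraboloid-type barrier whose gradient stays outside $D$ except where the Laplacian is negative. Alternatively, for $\beta$ fixed, we use $\inf_{x_0}\psi_{x_0}$ from Lemma \ref{lemma sup and sub interior}, which only depends on $M$ and the modulus. Either route gives $\|w_k\|_\infty \le C(1+\|\phi\|_\infty)$.
\item[(b)] Interior compactness. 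By Lemma \ref{Lemma Harnack Inequality} and the estimate \eqref{eq interior regularity}, $w_k \ge c$ on $B_{1/2}$, and the same holds on every compact subball after rescaling. The rescaling keeps the lower bound positive because the bump $\phi_t$ argument only uses $h(0)<0$. Hence the $w_k$ are bounded in $C^{2,\alpha}_{loc}(B_1)$.
\item[(c)] A uniform modulus of continuity at $\partial B_1$. This is the step I expect to be the main obstacle, since the barriers of Lemma \ref{lemma sup and sub interior} were built for positive Hölder data, and the subsolution used the floor $\delta<\min\phi$, which degenerates as $\min\phi_k\to 0$.
\end{itemize}

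\textbf{Boundary barriers.} From above, the supersolutions $\psi_{C_{\sup},\phi_k}$ do not need positivity. Using a common modulus $\omega$, replace $|(x-x_0)\cdot\nu|^{\alpha/2}$ by a concave function of the distance dominating $\omega$. The same computation works, since the Laplacian is negative and the gradient is large near $\partial B_1$, so $h(\nabla\psi)>0$ there. This gives $w_k(x)\le \phi_k(x_0)+\omega'(|x-x_0|)$.

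From below, we only need $w_k \ge \phi_k(x_0) - \omega'(|x-x_0|)$ when $\phi_k(x_0)$ is not small. Points where $\phi(x_0)\le \epsilon$ are trivial because $w_k\ge 0$. Otherwise $\phi_k\ge \epsilon/2$ on a boundary cap around $x_0$. There we use the local subsolution $\max\{\phi_k(x_0)-\epsilon/4 - C_\epsilon|x-x_0|^{\beta},\ \delta\}$ with $\delta=\epsilon/8$, with constants depending only on $\epsilon$ and $\omega$, exactly as in Lemma \ref{lemma sup and sub interior}. To keep it below $w_k$ away from the cap, note that $w_k>0$ everywhere, so the constant $\delta$ is not obviously below $w_k$. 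I would instead localize by taking the subsolution $\max\{\cdot, 0\}$. Since $h(0)<0$ only helps and the zero function is a trivial comparison, the argument of Lemma \ref{lemma sup and sub interior} restricted to the set where the expression exceeds $\delta$, together with taking $\beta$ small, makes the subsolution region lie in the cap.

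This yields equicontinuity of the $w_k$ up to $\partial B_1$, depending on $\epsilon$.

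\textbf{Conclusion.} By Arzelà–Ascoli, a subsequence converges uniformly on $\overline{B_1}$ to some $w$ with $w=\phi$ on $\partial B_1$. The $C^{2,\alpha}_{loc}$ bounds and the positivity $w\ge c$ on compact sets let us pass to the limit in the equation, classically on compact subsets and hence in the viscosity sense. So $w$ solves \eqref{eq interior} with boundary datum $\phi$.
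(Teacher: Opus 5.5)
Your overall route (positive H\"older approximation, interior compactness from \eqref{eq interior regularity} and the bump lower bound, barriers at $\partial B_1$) is the paper's. Your upper barrier is also fine: a concave profile in $t=(x-x_0)\cdot\nu_{x_0}$ with slope bounded below has $\Delta\psi\le 0<h(\nabla\psi)$, this survives adding constants, and so it compares directly with $w_k$. The gap is in the lower barrier, exactly the step you flagged.

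First, $-C_\epsilon|x-x_0|^\beta$ is superharmonic for $n\ge 2$, since $\Delta|x-x_0|^\beta=\beta(n-2+\beta)|x-x_0|^{\beta-2}>0$. It therefore cannot satisfy $\Delta g\ge h(\nabla g)/g$ where its gradient is large. You need the one-dimensional profile $g=a-C_\epsilon t^\beta$, with $a=\phi_k(x_0)-\epsilon/4$, as in Lemma \ref{lemma sup and sub interior}.

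Second, and more seriously, the computation of that lemma gives $g\,\Delta g\ge C_h|\nabla g|^2$ exactly where $g\ge \frac{C_h\beta}{1-\beta+C_h\beta}\,a$. Below that level the inequality can only hold if $h(\nabla g)\le 0$, i.e.\ if $\nabla g$ is small. In your regime ($C_\epsilon$ fixed, $\beta\to 0$ to push the region into the cap), the zero level sits at $t_0=(a/C_\epsilon)^{1/\beta}$, and there $|\nabla g|=\beta a/t_0$ is enormous. So $h(\nabla g)>0$ near $\{g=0\}$, and $h(\nabla g)/g\to+\infty$ while $\Delta g$ stays bounded. Hence $\max\{g,0\}$ is not a subsolution, and the remark that $h(0)<0$ only helps does not apply. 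Keeping the floor $\delta$ instead would require $w_k\ge\delta$ on $\{g=\delta\}\cap B_1$. The closure of that set reaches $\partial B_1$, so this is the very uniform boundary lower bound you are trying to prove, while interior bounds degenerate at $\partial B_1$.

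A barrier that works uses only $h\le C_h|\rho|^2$ and has vanishing gradient at its zero level. Take $g(x)=a\,(1-t/T)_+^{p}$ with $p>\frac{1}{1-C_h}$ and $T$ so small that $\omega(\sqrt{2T})\le\epsilon/4$ (assume $a>0$; otherwise there is nothing to prove).
\begin{itemize}
\item For $G(t)=a(1-t/T)^p$ one has $GG''=\frac{p-1}{p}(G')^2$. So in $\{g>0\}\cap B_1$ you get $g\,\Delta g=\frac{p-1}{p}|\nabla g|^2>C_h|\nabla g|^2\ge h(\nabla g)$, a strict subsolution.
\item On $\partial B_1$ one has $|y-x_0|^2=2t$, so $g\le a\le\phi_k$ on $\partial B_1\cap\{t<T\}$, while $g=0<w_k$ on $\{t=T\}$.
\item The ratio $g\,\Delta g/|\nabla g|^2$ is invariant under $g\mapsto\lambda g$, so every $\lambda g$ with $\lambda\in(0,1]$ is a strict subsolution. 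Since $\min_{\overline{B_1}}w_k>0$, you can slide $\lambda$ up from $0$. A first contact at some $\lambda<1$ would have to be interior, because $\lambda g<\phi_k$ on the spherical part. That contradicts $w_k$ being a viscosity supersolution.
\end{itemize}
Sliding in $\lambda$ rather than shifting by constants matters here. Lowering a subsolution by a constant destroys the inequality where $h>0$, and no general comparison principle is available.

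This yields $w_k(x)\ge\phi_k(x_0)-\epsilon/4-\frac{pa}{T}|x-x_0|$ uniformly in $k$, and with it the rest of your argument goes through. Your own profile can also be repaired: fix the zero level $T$ inside the cap, take $C_\epsilon=a/T^{\beta}$, and then choose $\beta$ small depending on $a/T$. That route, however, requires checking that $\nabla g\in D$ wherever the $C_h$-estimate fails.
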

\begin{proof}
We proceed by an approximation argument. For $\epsilon > 0$, consider a family of mollifications $\{\phi_{\epsilon}\}_{\epsilon \in \mathbb{R}}$ such that $\phi_{\epsilon} \in C^{\alpha}(\partial B_1)$ for some $\alpha$ depending on $\epsilon$. Then, consider the sequence of solutions $w_{\epsilon}$ of the Dirichlet problem for the interior equation with boundary datum $\phi_{\epsilon}$. We then recall the regularity estimates

\begin{equation*}
||w_{\epsilon}||_{C^{2, \alpha}(B_{\frac{1}{2}})} \leq C||w_{\epsilon}||_{L^{\infty}(B_1)}.
\end{equation*}
Hence, by passing to a subsequence if necessary, we may conclude that $w_{\epsilon} \rightarrow w$ uniformly for any $K \subset \subset \Omega$. 

Now, to ensure convergence at the boundary, we use the existence of the family of supersolutions and subsolutions constructed in Lemma \ref{lemma sup and sub interior}. Namely, suppose by way of contradiction that $w(x_0) = K$ for some $K \geq 0$ and $x_0 \in \partial B_1$ and that there exists some fixed $\epsilon > 0$ and a sequence of points $x_{j} \rightarrow x_0$ such that 
\begin{equation*}
w_{k}(x^{j}) \geq K + \epsilon    
\end{equation*}
for all $k \geq C(x^{j})$. Then, we consider a family of supersolutions $\{v_{j}\}$ to the interior equation \eqref{eq interior} in $B_1$ such that $\{v_j\} \geq \phi$ on $\partial B_1$ and $v_{j} \rightarrow \infty$ pointwise. Then, some $v_j$ will touch some $w_{k}$ by above at an interior point, providing a contradiction. Likewise, if $K > 0$ and 
\begin{equation*}
w_{k}(x^{j}) \leq K - \epsilon    
\end{equation*}
for all $k \geq C(x^{j})$, then we consider a sequence of balls $B_k$ that are interior to $B_1$, are tangent to $\partial B_1$ at $x_0$, and such that $x^{j} \in B_{j}$. Then, for each ball $B_{j}$, we construct a family of subsolutions $\{u_{k}^{j}\}_{k}$ such that $u_{k}^{j} \leq w$ on $\partial B_j$ (notice we can do this since the function is strictly positive on $B_j$) and $u_{k}^{j} \rightarrow 0$ pointwise as $k \rightarrow \infty$. It then follows that some $u_{k}^{j}$ will touch some $w_i$ by below at an interior point, providing a contradiction. 

Thus, we conclude that $w$ is the desired solution of the Dirichlet problem. Moreover, $w$ satisfies the regularity estimate \eqref{eq interior regularity}. 
\end{proof}
The previous result then concludes the proof of Theorem \ref{thm interior equation}. We conclude our discussion of the interior equation by showing that imposing an additional monotonicity assumption on the non-linearity $h$ yields a uniqueness result. This extends the corresponding result known for the Alt-Phillips equation for negative power potentials.

\begin{proposition}
Suppose that the right-hand side of the interior equation \eqref{eq interior} satisfies that the map $t \rightarrow h(t\nu)$ is strictly increasing for all $\nu \in \mathbb{S}^{n-1}$. Then, for any $\phi \in C(\partial B_1)$, the solution of the Dirichlet problem for the interior equation \eqref{eq interior} in $B_1$ with boundary datum $\phi$ admits a unique solution.
\end{proposition}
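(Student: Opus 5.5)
The plan is a scaling comparison. Let $w_1,w_2$ be two solutions of the Dirichlet problem with datum $\phi$. I will show $w_1\le w_2$; uniqueness then follows by symmetry. By the regularity part of Theorem \ref{thm interior equation}, both are positive and $C^{2,\alpha}$ in the interior, so classical touching arguments apply away from $\partial B_1$. For $t>1$ consider $v_t=t\,w_2$. A direct computation gives
\begin{equation*}
\Delta v_t=\frac{t^2\,h(\nabla v_t/t)}{v_t}.
\end{equation*}
So $v_t$ solves a modified equation. Since $w_2\ge c>0$ on compact subsets, $v_t\ge w_1$ for $t$ large on any fixed compact set. I will then decrease $t$ to the first value $t^*$ at which $v_{t}$ touches $w_1$ from above, and aim to show that $t^*\le 1$.

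Suppose instead $t^*>1$, and that the touching occurs at an interior point $x_0$. Write $p=\nabla w_1(x_0)=t^*\nabla w_2(x_0)$. From $D^2w_1(x_0)\le D^2v_{t^*}(x_0)$ and $w_1(x_0)=v_{t^*}(x_0)$ we get
\begin{equation*}
h(p)\le (t^*)^2\,h(p/t^*).
\end{equation*}
Write $p=s\nu$ and $g(r)=h(r\nu)$.
\begin{itemize}
\item If $p=0$, the inequality reads $h(0)\le (t^*)^2h(0)$. This is false, because $h(0)<0$ and $t^*>1$.
\item If $g(s/t^*)<0$, then $(t^*)^2g(s/t^*)<g(s/t^*)<g(s)$, the second step by strict monotonicity of $t\mapsto h(t\nu)$. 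This contradicts the displayed inequality.
\end{itemize}
The remaining case is the main obstacle: $g(s/t^*)\ge 0$, i.e. $p/t^*\notin D$. Strict monotonicity alone does not compare $t^2g(s/t)$ with $g(s)$ there. In the model case $h=c(|\rho|^2-1)$ the identity $t^2h(p/t)=c(|p|^2-t^2)<h(p)$ settles it, but in general it does not. To handle this case I would first try to exploit that the first touching time can be approached from the region where the gradient lies in $D$. Concretely, I would combine the multiplicative perturbation with a small radial Lipschitz rescaling $t\,w_2(x/t)$, which leaves the equation invariant, so that the competitor becomes a strict supersolution wherever $\nabla\notin D$. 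If this fails, I would isolate exactly the structural condition $r\mapsto g(r)/r^2$ increasing on $\{g\ge 0\}$, which the strict monotonicity hypothesis is presumably meant to provide in this range.

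Boundary touching must also be excluded. Where $\phi>0$ we have $v_t=t\phi>\phi=w_1$, so touching there is impossible. Where $\phi=0$ both functions vanish, and control comes from the barriers of Lemma \ref{lemma sup and sub interior}. To avoid this difficulty, I would first prove uniqueness for strictly positive Hölder data. For these data, continuity up to the boundary together with $t\phi>\phi$ forces the touching to occur in the interior. I would then pass to general nonnegative continuous $\phi$ by approximation with $\phi+\epsilon$, mollified. This requires a monotone dependence of solutions on the data, which is itself the comparison just proved. The limit of these solutions, obtained as in the Corollary above via the interior $C^{2,\alpha}$ estimates and boundary barriers, then dominates every solution with datum $\phi$ from above and below, giving uniqueness.
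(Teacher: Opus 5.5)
You have a genuine gap, and it is exactly the case you flag. At a first contact point the only information available is $h(p)\le (t^*)^2h(p/t^*)$ with $t^*>1$. Writing $p=s\nu$ and $G(r)=r^{-2}h(r\nu)$, this reads $G(s)\le G(s/t^*)$. So a contradiction requires $G$ to increase from $s/t^*$ to $s$.

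Strict monotonicity of $r\mapsto h(r\nu)$ gives this only when $h(p/t^*)<0$, as you showed. On $\{h\ge 0\}$ it fails in general. Take $h(\rho)=c\big(\sqrt{1+|\rho|^2}-\sqrt2\big)$ with $0<c<1/\sqrt2$. It satisfies Assumption~\ref{assumption standard assumption}, \eqref{eq assumption c_h} and strict radial monotonicity. Yet for each fixed $t>1$, $t^2h(p/t)-h(p)\to+\infty$ as $|p|\to\infty$, so the contact inequality carries no contradiction there.

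Your proposed repair with $t\,w_2(x/t)$ cannot help. The Lipschitz rescaling preserves the equation, so a competitor $s\,\lambda w_2(x/\lambda)$ produces the identical contact inequality $h(p)\le s^2h(p/s)$. Moreover, it lives on $B_\lambda$ with values on $\partial B_1$ unrelated to $\phi$, so it is not a barrier.

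What your argument actually proves, for strictly positive data, is uniqueness under the stronger hypothesis that $t\mapsto t^{-2}h(t\nu)$ is strictly increasing. A sufficient condition is $\rho\cdot\nabla h(\rho)>2h(\rho)$, which holds for $h=c_\gamma(|\rho|^2-1)$.

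The paper does not supply the missing idea; its proof is your argument in logarithmic variables. A positive maximum of $u_1-u_2=\log(w_1/w_2)$ at $x_0$ is exactly the first contact of $t^*w_2$ with $w_1$, where $t^*=e^{\max(u_1-u_2)}$. The paper closes the argument only because it writes $\Delta u_i+|\nabla u_i|^2=h(e^{u_i}\nabla u_i)$. The correct identity is $\Delta u_i+|\nabla u_i|^2=e^{-2u_i}h(e^{u_i}\nabla u_i)$. With that factor restored, $\Delta(u_1-u_2)(x_0)\le 0$ is precisely your inequality $h(p)\le (t^*)^2h(p/t^*)$. So your diagnosis of the needed structural condition is correct, and the honest fix is to add it as a hypothesis.

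Separately, your passage to data with zeros is only one-sided. Granting the structural condition, solutions with data $\phi_\epsilon>\phi$ dominate every solution with datum $\phi$ from above. The reverse bound would require comparing $t\,w$ with $w_\epsilon$, which fails on $\partial B_1\cap\{\phi=0\}$, and no strictly positive data lie below $\phi$ there. The paper has the same hole: where $\phi=0$ one has $u_i=-\infty$ on $\partial B_1$, and it simply assumes an interior maximum of $u_1-u_2$. Uniqueness for such data needs an additional argument controlling $w_1/w_2$ near the zeros of $\phi$.
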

\begin{proof}
Suppose, by way of contradiction, that $w_1$ and $w_2$ both solve the Dirichlet problem with boundary datum $\phi$ and that $w_1 \neq w_2$. For $i \in \{1,2\}$, we then let
\begin{equation*}
u_i = \log w_i,
\end{equation*}
noting that $u_i$ then satisfies the interior equation
\begin{equation*}
\Delta u_i + |\nabla u_i|^2 = h(e^{u_i}\nabla u_i).
\end{equation*}
Then, since $w_1 \neq w_2$, we may assume by way of contradiction that $u_1 - u_2$ attains a positive maximum at the origin. Since $\nabla u_1(0) = \nabla u_2(0)$, we then get that for $u = u_1 - u_2$,
\begin{equation*}
\Delta u(0) = h\big(e^{u_1(0)}\nabla u_1(0)\big) - h\big(e^{u_2(0)}\nabla u_2(0)\big) > 0,
\end{equation*}
where in the last inequality we have used the additional assumption of the monotonically increasing property of $h$. This then yields a contradiction to $\Delta u(0) \leq 0$ due to $0$ being a maximum.
\end{proof}

\subsection{The Free Boundary Problem}\label{subsec free boundary}

Now, we treat solutions to the free boundary problem \eqref{eq main problem}. Using the Harnack inequality along with the free boundary condition \eqref{eq free boundary}, we first prove an a priori optimal growth result near free boundary points, which directly yields the Lipschitz regularity of solutions near free boundary points specified in Theorem \ref{thm Lipschitz Regularity}
\begin{proposition}\label{prop optimal growth}
Let $w$ be a solution to \eqref{eq main problem}. Then, 
\begin{equation}\label{eq optimal growth}
w(x) \leq Cdist(x, FB(w)).
\end{equation}
\end{proposition}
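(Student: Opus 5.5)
Fix $x \in \{w>0\}$ and set $r = \dist(x, FB(w))$, so that $B_r(x) \subset \{w>0\}$ and there is a free boundary point $x_0 \in \partial B_r(x)$. The plan is to reduce to the unit scale with the invariant Lipschitz rescaling \eqref{eq rescaling}. Define $\tilde w(y) = w(x + ry)/r$. This is a positive solution of \eqref{eq interior} in $B_1$, and $e := (x_0-x)/r \in \partial B_1$ is a free boundary point of $\tilde w$. The claim $w(x)\le Cr$ becomes $\tilde w(0) \le C$. I argue by contradiction and assume $M := \tilde w(0)$ is very large.

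\textbf{Step 1 (Harnack propagation).} Lemma \ref{Lemma Harnack Inequality} with $\sigma=0$, applied on balls $B_\rho(z)\subset B_1$ and chained, gives $\tilde w \ge c(\delta) M - C$ on $B_{1-\delta}$. Together with Corollary \ref{Corollary A priori} (after rescaling), this also gives $|\nabla \tilde w| \le C(\delta)\tilde w$ there. The consequence is that $\tilde w$ is comparable to $M$ on the whole interior ball, except in a thin layer near $\partial B_1$. Equivalently, on any fixed smaller ball tangent to $\partial B_1$ at $e$ from inside, the values of $\tilde w$ at a definite distance from $e$ are at least $cM$.

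\textbf{Step 2 (barrier touching at $e$).} I build a subsolution barrier in an annular region of a ball $B$ tangent at $e$, for instance $B = B_{1/2}(e/2)$, with $d$ the distance to $\partial B$ inside $B$ and $\nu=\nabla d$. The candidate is
\[
u = f(\nu)d + s\,d^{1+\mu},
\]
where $\mu$ satisfies \eqref{eq mu full condition} and $s>0$. By Lemma \ref{Lemma Test Functions}, $u$ is a strict subsolution in $\{d \le c(s,\mu)\}$. Now slide: take $u_t = u(\cdot)$ restricted to the layer $\{d\le d_0\}$ and compare it with $\tilde w$. On the inner boundary $\{d=d_0\}$ we have $u \le C + s d_0^{1+\mu}$, while $\tilde w \ge cM$ by Step 1. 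On $\partial B$ we have $u=0\le \tilde w$. Therefore $u \le \tilde w$ holds on the boundary of the layer once $M \gg 1$.

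Next I increase the parameter, either by enlarging $B$ slightly or by translating it outward, until first contact. Interior contact is excluded by the comparison principle for strict subsolutions. Contact can therefore only occur at a free boundary point on $\partial B$, where $\tilde w$ is touched from below by $f(\nu)d + s d^{1+\mu}$. This contradicts the supersolution free boundary condition $\partial^-_{\nu\nu}\tilde w\le 0$ of Definition \ref{def free boundary}. Since $e\in\partial B$ is a free boundary point, the family does reach it, and a contact does occur.

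\textbf{Main obstacle.} The delicate point is the layer width. Lemma \ref{Lemma Test Functions} only gives the subsolution property for $d \le c(s,\mu,\kappa)$, a fixed small width. I therefore need $\tilde w \ge Cs$ on the inner boundary of that fixed layer, which is exactly where Harnack must deliver largeness. I would fix $s=1$ and the ball $B$ first, and then choose $M$ large depending on these, using the chained Harnack inequality on compact subsets of $B_1$; the additive $+1$ in \eqref{eq Harnack} is harmless at this scale.

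A second subtlety is to make the sliding legitimate. The family $B_t = B_{1/2}((1-t)e/2)$ moves toward $e$, and the first touching point must avoid the boundary of the layer. This holds because $\tilde w \ge u_t$ with strict inequality on $\{d=d_0\}$ uniformly in $t$. Finally, undoing the rescaling gives \eqref{eq optimal growth}.
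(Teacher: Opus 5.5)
Your proposal is correct and follows essentially the same argument as the paper. Both rescale so that the maximal ball in the positivity set is tangent to the free boundary, and use the Harnack inequality (Lemma \ref{Lemma Harnack Inequality} with $\sigma=0$) to make the solution large on an inner sphere. Both then slide the strict subsolution $f(\nu)d+s\,d^{1+\mu}$ of Lemma \ref{Lemma Test Functions} through a thin annular layer until it touches at the free boundary point, which contradicts the supersolution free boundary condition.

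The differences are cosmetic. You use the internally tangent ball $B_{1/2}(e/2)$ and a translated family of balls, while the paper uses the maximal ball $B'$ itself and a concentric family of balls between $B''$ and $B'$. In your version, state explicitly that at $t=1$ the whole layer lies in $\overline{B_{1/2}}$, where $\tilde w\ge cM-C$ already dominates $u_1$, so the sliding starts without contact.
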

\begin{proof}
We proceed by contradiction. As in the previous result, we may assume (by employing a Lipschitz rescaling) that $0 \in FB(w)$ and that $B'$ is a ball tangent to the free boundary at the origin. Next, for $\epsilon, \mu > 0$, consider the function 
\begin{equation}
v(x) = f(\nu(x))d(x) + \epsilon d(x)^{1 + \mu},
\end{equation}
where
\begin{align*}
d(x) = 
\begin{cases}
dist(x, \partial B') \qquad & \text{ for } x \in B' \\
0 \qquad & \text{ otherwise,}
\end{cases}
\end{align*}
and $\nu(x) = \nabla d(x)$. Per Lemma \ref{Lemma Test Functions}, choose $\mu$ so that $v$ is a subsolution to \eqref{eq main problem} in a neighborhood of $\partial B'$. Then, we let $B'' \subset B'$ be a ball with the same center as $B'$ such that $v$ is a subsolution to \eqref{eq main problem} in $B' \setminus B''$. Then, if $w \geq C$ at the center of $B'$, we may use the Harnack inequality \eqref{eq Harnack} with $\sigma = 0$ to deduce that $w \geq C$ in $\partial B''$. We may then guarantee that $v < w$ on $\partial B''$. Then, if $v - w$ attains a positive maximum in $B' \setminus B''$, then the function $\bar{v}$ defined in the same way as $v$ but with $d(x)$ now denoting the distance to a ball $B^{'''}$ that contains $B^{''}$ and lies in $B^{'}$ will for an appropriate choice of $B^{'''}$ touch $w$ by below at an interior point, providing a contradiction. Hence, we conclude that $v$ touches $w$ by below at the origin, a contradiction to the free boundary condition \eqref{eq free boundary}.
\end{proof}

We are now in a position to prove the existence of solutions to the free boundary problem \eqref{eq main problem} with any given non-negative continuous boundary data. We proceed by using Perron's method along with a replacement procedure that uses the fact that we may solve the Dirichlet problem for the interior equation \eqref{eq interior} in $B_1$. Before proceeding, we clarify for the reader that a supersolution (resp. subsolution) to the free boundary problem \eqref{eq main problem} is a supersolution (resp. subsolution) to the interior equation \eqref{eq interior} that satisfies the supersolution free boundary condition \eqref{eq free boundary condition supersol} (resp. subsolution free boundary condition \eqref{eq free boundary condition subsol}).

\begin{theorem}
Let $\phi \in C(\partial B_1)$ be non-negative. Then, there exists a solution to the Dirichlet free boundary problem 
\begin{align*}
&\Delta w = \frac{h(\nabla w)}{w} && \text{in } B_1^{+}, \label{eq main problem}\\
&w = \phi && \text{on } \partial B_1, \nonumber \\
&\partial^{-}_{\nu \nu} w = 0 && \text{in } FB(w). \nonumber
\end{align*}
\end{theorem}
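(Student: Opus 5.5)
I will use Perron's method with the minimal supersolution, as announced in Theorem \ref{thm Perron Existence}. Let
\[
\mathcal{A} = \{v \in C(\overline{B_1}) : v \text{ is a supersolution to \eqref{eq main problem}}, \ v = \phi \text{ on } \partial B_1\},
\qquad w = \inf_{v \in \mathcal{A}} v .
\]

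\textbf{Step 1: $\mathcal{A}$ is nonempty and controlled at the boundary.} The solution of the interior Dirichlet problem with datum $\phi+\delta$ is positive, so it has no free boundary. I would then check that the solution for datum $\phi$ belongs to $\mathcal{A}$, or else use the barriers $\psi_{C_{\mathrm{sup}},\phi}$ of Lemma \ref{lemma sup and sub interior}. In either case $\mathcal{A}\neq\emptyset$ and $w \le C(1+\|\phi\|_\infty)$. For lower barriers I would use the bump $c(1-|x|^2)$ together with the subsolutions $\varphi_{C_{\mathrm{sub}},\phi}$; near points where $\phi=0$ I would take $0$. These give $w=\phi$ on $\partial B_1$, with continuity at the boundary argued as in the corollary after Proposition \ref{prop existence positive data}.

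\textbf{Step 2: continuity of $w$ and the supersolution property.} I would show that the infimum can be restricted to supersolutions with a uniform modulus of continuity. In the positivity set this comes from the inf-convolution argument of Proposition \ref{prop existence positive data}. Near the free boundary I would use the a priori growth bound $v \le C\,\mathrm{dist}(x,FB(v))$. Proposition \ref{prop optimal growth} proves this bound using only the supersolution free boundary condition and the Harnack inequality, so it applies to supersolutions. The infimum of supersolutions is then a supersolution in $\{w>0\}$ by the usual viscosity argument. For the free boundary condition: suppose $f(\nu)d+\epsilon d^{1+\mu}$ touches $w$ from below at $x_0\in FB(w)$. After perturbing it slightly to make the touching strict, it touches some $v\in\mathcal{A}$ from below near $x_0$ at a point of $\overline{\{v>0\}}$. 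At an interior point this contradicts Lemma \ref{Lemma Test Functions}, since the test function is a strict subsolution. At a free boundary point of $v$ it contradicts the supersolution condition for $v$.

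\textbf{Step 3: subsolution property (the replacement).} In $\{w>0\}$, suppose a polynomial touches $w$ from above with strict subsolution inequality. Then I would lower $w$ in a small ball by $\min\{P - c|x-x_0|^{\beta_0}, w\}$, exactly as in Proposition \ref{prop existence positive data}. This produces an element of $\mathcal{A}$ below $w$, a contradiction.

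The main obstacle is the subsolution free boundary condition. Suppose $u=f(\nu)d-\epsilon d^{1+\mu}$ touches $w$ from above at $x_0\in FB(w)$. I would build a competitor $\min\{w, \tilde u\}$, where $\tilde u$ is a small inward translate of $u$ with $\epsilon$ slightly smaller. By Lemma \ref{Lemma Test Functions}, $\tilde u$ is a supersolution of \eqref{eq interior} near $\partial B'$. It also satisfies the supersolution free boundary condition: it is $C^{1,\mu}$ with slope exactly $f(\nu)$, and for $\mu'<\mu$ it cannot be touched from below by $f d + \epsilon' d^{1+\mu'}$. Hence $\min\{w,\tilde u\}\in\mathcal{A}$. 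It is strictly smaller than $w$ at points near $x_0$, which contradicts minimality. The delicate points are two. First, the strict touching must be arranged so that the minimum equals $w$ away from a neighborhood of $x_0$. Second, the minimum of two supersolutions must satisfy the free boundary condition; I would check this at points where the free boundaries cross. Finally, nondegeneracy $\sup_{B_r(x_0)} w \ge cr$ would follow from the same replacement idea: if $w$ were too small in $B_r(x_0)$, the bump subsolution would force $w$ to be positive at $x_0$.
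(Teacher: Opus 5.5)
Your Perron skeleton matches the paper's, but Step 2 has a genuine gap. You apply the growth bound of Proposition \ref{prop optimal growth} to arbitrary $v\in\mathcal{A}$, on the grounds that its proof uses only the supersolution property. That is not so.

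\begin{itemize}
\item The step ``$w$ is large at the center of $B'$, hence large on $\partial B''$'' is the full Harnack inequality of Lemma \ref{Lemma Harnack Inequality}. That lemma is stated for solutions, and its upper half (bounding the supremum by the $L^p$ norm) uses $\Delta w\ge -C$, i.e.\ the subsolution inequality.
\item For supersolutions this propagation is false. For $n\ge 3$, take $v=\min\{A|x|^{2-n},\,K-M|x|^2\}$ with $A$ large (so that $\nabla(A|x|^{2-n})\notin\bar D$ in $B_{1/2}$) and $M=m/(nK)$, where $m=-\min h$. For every large $K$ this is a supersolution in $B_{1/2}$ with $v(0)=K$, while $v=A\,2^{n-2}$ on $\partial B_{1/2}$.
\item The inf-convolution argument of Proposition \ref{prop existence positive data} relies on $\inf u_1>0$. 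This fails for supersolutions with a free boundary, so it gives no equicontinuity near $FB(v)$.
\end{itemize}

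The missing idea is the paper's replacement step. Suppose some $v\in\mathcal{A}$ vanishes within distance $c_0w(0)$ of $0$.
\begin{itemize}
\item First replace $v$, in a ball inside its positivity set, by the minimal Perron solution of the interior equation. This lowers $v$ and keeps it in $\mathcal{A}$, so still $v\ge w$.
\item Only now does Harnack give $v\ge c\,w(0)$ on an inner sphere.
\item Then the subsolution $f(\nu)d+d^{1+\mu}$ of Lemma \ref{Lemma Test Functions} touches $v$ from below at its zero, contradicting the free boundary condition.
\end{itemize}
This shows $\{w>0\}$ is open, with a quantitative lower bound. No uniform modulus is needed: $w$ is upper semicontinuous as an infimum of continuous functions and is nonnegative, so it is continuous at its zeros. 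Interior replacements then make it a solution in $\{w>0\}$.

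You correctly flag the free boundary conditions as delicate, but the perturbations you propose do not work as stated. Since $0\le w\le u$ and $u=0$ on $\Gamma$, we have $u-w\to 0$ at $\Gamma$. A rigid inward translate $\tilde u$ therefore vanishes on a strip of $\{d>0\}$ reaching $\partial B_c$, where $w$ may be positive. Shrinking $\epsilon$ gains only $O(d^{1+\mu})$ and does not help. So $\min\{w,\tilde u\}$ cannot be glued to $w$.

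What is needed is a test function whose zero set is pushed into $\{d>0\}$ near $x_0$ but into $\{d=0\}$ near $\partial B_c$. The paper builds it by interpolating between the two parallel surfaces at distance $\epsilon_0$:
\begin{itemize}
\item $\Gamma'$ equals the parallel surface inside $\{d>0\}$ in $B_{c/4}$, and the one inside $\{d=0\}$ in $B_c\setminus B_{3c/4}$.
\item On $\{d>0\}\cap(B_c\setminus B_{3c/4})$ one has $\nu_1=\nu$ and $d_1=d+\epsilon_0$. So even for anisotropic $f$, $u_1=f(\nu_1)d_1-\delta d_1^{1+\mu}>u\ge w$ there for $\delta$ small.
\item Meanwhile $u_1\equiv 0$ near $x_0$. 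Hence $\min\{w,u_1\}$, extended by $w$ outside $B_c$, lies in $\mathcal{A}$ and is strictly below $w$ somewhere.
\end{itemize}
The mirror construction is what ``making the touching strict'' must mean for the supersolution condition in your Step 2. There $\Gamma_1$ is pushed into $\{d=0\}$ near $x_0$ and into $\{d>0\}$ near $\partial B_c$, so $u_1(x_0)>0=w(x_0)$ while $u_1<u\le w$ on $\partial B_c$.

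Finally, the bump $c(1-|x|^2)$ and $\varphi_{C_{\mathrm{sub}},\phi}$ are positive interior subsolutions. They cannot serve as lower barriers for $\mathcal{A}$, whose elements may vanish inside $B_1$; for instance $0\in\mathcal{A}$ when $\phi\equiv 0$. For the same reason your nondegeneracy sketch runs the wrong way. The paper instead uses the supersolution $f(\nu)d-\epsilon d^{1+\mu}$, which vanishes on $B_{r-t}(x_0)$, together with the minimality of $w$.
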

\begin{proof}
We define the class of functions
\begin{equation*}
\mathcal{A} = \{v \in C(\overline{B_1}): \text{ is a supersolution to \eqref{eq main problem} and $v = \phi$ on $\partial B_1$}\}.
\end{equation*}
We then set
\begin{equation*}
w(x) := \inf_{v \in A} v(x).
\end{equation*}
\noindent
We must first show that $w$ is continuous. To such end, we will demonstrate that the positivity set of $w$ is open. 

Suppose that $w(0) > 0$. We claim that every $v \in \mathcal{A}$ is strictly positive in $B_{c_0w(0)}$ for some small universal parameter $c_0$. Let $v \in \mathcal{A}$, and suppose by way of contradiction that we can find some $x_0 \in B_1$ with $v(x_0) = 0$ and $|x_0| < c_0w(0)$. By a Lipschitz rescaling of factor $|x_0|^{-1}$, we may suppose that $v > 0$ in $B_1$, that $x_0 \in \partial B_1$, and that $w(0) > c_0^{-1}$. Now, consider the test function
\begin{equation*}
u(x) = f(\nu(x))d(x) + d(x)^{1 + \mu},
\end{equation*}
where $d(x)$ is the distance to the boundary of $B_1$, $\nu(x) = \nabla d(x)$, and $\mu$ is such that $u$ is a subsolution to \eqref{eq main problem} in $B_{1 - c(\mu)}$ by Lemma \ref{Lemma Test Functions}. Next, we may replace $v$ in $B_{1 - \frac{c(\mu)}{2}}$ with the minimal Perron solution of the interior equation \eqref{eq interior}; notice that $v$ still remains in the competition class $\mathcal{A}$ after this transformation and is smaller than the original $v$ that was considered. Now, we may apply the Harnack inequality in Lemma \ref{Lemma Harnack Inequality} with $\sigma = 0$  to $v$ to deduce that $v \geq c \cdot w(0)$ in $B_{1 - c(\mu)}$; hence, 
\begin{equation}\label{eq harnack perron fb}
w \geq c \cdot w(0).
\end{equation}

If $c_0$ is sufficiently small, then $v > u$ on $\partial B_{1 - c(\mu)}$. We note that $v - u$ cannot attain a negative minimum in $B_1 \setminus B_{1 - c(\mu)}$; otherwise, a test function of the form described in Lemma \ref{Lemma Test Functions} with $d(x)$ now being the distance to $\partial B_{1 - t}$ for $t > 0$ would touch $v$ by below at an interior point, yielding a contradiction. We thus conclude that $u$ touches $v$ by below at the origin, providing a contradiction and thus proving the claim above. 

In conclusion, \eqref{eq harnack perron fb} now shows that $w > 0$ on $B_{c_0w(0)/2}$. This then shows that the positivity set of $w$ is open, and so $w$ solves the interior equation \eqref{eq interior} in its positivity set.  

It remains to show that $w$ satisfies the free boundary condition \eqref{eq free boundary}. So, assume that $0 \in FB(w)$, and let $\Gamma$ be a $C^{3}$ hypersurface such that $0 \in \Gamma$. First, suppose, by way of contradiction, that the test function
\begin{equation*}
u(x) = f(\nu(x))d(x) + sd(x)^{1 + \mu}
\end{equation*}
touches $w$ by below at the origin, where $d$ is the distance to $\Gamma$ on one side of $\Gamma$ and $0$ on the other, $\nu(x) = \nabla d(x)$, $\mu \in (0,1)$, and $s > 0$. Without loss of generality, we may assume $\mu$ satisfies \eqref{eq mu full condition} and $s = 1$. Then, consider the two parallel surfaces of $\Gamma$ that are at distance $\epsilon_0$ from $\Gamma$, where the parameter $\epsilon_0 > 0$ is to be precised later. We will construct a new surface $\Gamma_1$ in a small ball $B_{c}$ as follows: $\Gamma_1$ coincides with the parallel surface to $\Gamma$ that lies in the set $\{d = 0\}$ in $B_{\frac{c}{4}}$, $\Gamma_1$ coincides with the other parallel surface of $\Gamma$ in $B_{c} \setminus B_{\frac{3c}{4}}$, and $\Gamma_1$ is a smooth deformation between the two parallel surfaces in $B_{\frac{3c}{4}} \setminus B_{\frac{c}{4}}$. We now define
\begin{equation*}
u_1(x) = f(\nu_1(x))d_1(x) + \delta d_1(x)^{1 + \mu},
\end{equation*}
where $d_1(x)$ is the distance to $\Gamma'$ on the side of $\Gamma'$ intersecting the positivity set of $d$, $\nu_1(x) = d_1(x)$, and the parameters $\mu, \delta,$ and $\epsilon_0$ are chosen so that $u_1$ is a subsolution of the interior equation \eqref{eq interior} in $B_c$ and so that $u_1 < u$ on $\partial B_c$. It then follows that an appropriate choice of $\epsilon_0$ implies that $u_1$ touches some supersolution $v \in \mathcal{A}$ that lies close to $w$ from below, providing a contradiction. 

Finally, suppose, by way of contradiction, that the test function
\begin{equation*}
u(x) = f(\nu(x))d(x) -  sd(x)^{1 + \mu}
\end{equation*}
touches $w$ by above at the origin. As in the previous paragraph, we consider two parallel copies of $\Gamma$ that are separated by a distance $\epsilon_0$. However, we now choose the new surface $\Gamma'$ so that it coincides with the parallel surface that lies in the set $\{d > 0\}$ near the origin. Specifically, $\Gamma'$ coincides with such surface in $B_{\frac{c}{4}}$, coincides with the other surface in $B_{c} \setminus B_{\frac{3c}{4}}$, and is a smooth deformation between the parallel surfaces in $B_{\frac{3c}{4}} \setminus B_{\frac{c}{4}}$. Similarly to our previous reasoning, we now define
\begin{equation*}
u_1(x) = f(\nu_1(x))d_1(x) - \delta d_1(x)^{1 + \mu},
\end{equation*}
where $d_1(x)$ is now the distance to $\Gamma'$ on the side of $\Gamma'$ intersecting the positivity set of $d$, $\nu_1(x) = \nabla d_1(x)$, and the parameters $\mu, \delta,$ and $\epsilon > 0$ are then chosen small enough so that $u_1 \geq u$ on $\partial B_c$. Note that the origin lies in the interior of the zero set of $u_1$, and that $u_1 > w$ on $\partial B_c$. Thus, it follows that the function
\begin{equation*}
\bar{w} = 
\begin{cases}
\min\{w, u_1\} \qquad &\text{ in $B_c$} \\
w \qquad &\text{elsewhere}
\end{cases}
\end{equation*}
satisfies $\bar{w} \in \mathcal{A}$ and $\bar{w} < w$, a contradiction.

\end{proof}

We conclude this section with a non-degeneracy estimate for Perron solutions of the free boundary problem \eqref{eq main problem}, which will complete the proof of Theorem \ref{thm Perron Existence}. The proof crucially uses the fact that the given solution can be seen as the least supersolution; in general, a solution to \eqref{eq main problem} may not satisfy this estimate. 

\begin{proposition}[Non-degeneracy]
For a given continuous, non-negative boundary datum $\phi$, let $w$ be the Perron solution of the Dirichlet problem for the free boundary problem \eqref{eq main problem}. For $x_0 \in FB(w)$ and $r$ such that $B_{r}(x_0) \subset \subset B_1$, it follows that
\begin{equation*}
\sup_{x \in B_r(x_0)} w(x) \geq cr.
\end{equation*}
\end{proposition}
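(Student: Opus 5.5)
The strategy is to argue by contradiction and exploit minimality: since $w$ is the least element of the supersolution class $\mathcal{A}$, it suffices to build a strictly smaller member of $\mathcal{A}$. By the Lipschitz rescaling \eqref{eq rescaling}, which preserves both the interior equation and the free boundary condition, we may take $x_0 = 0$ and $r = 1$. Assume then that $\sup_{B_1} w \leq \epsilon$ for a small universal $\epsilon$ to be fixed. The aim is to find a supersolution $v$ of \eqref{eq main problem} in $B_1$ with $v \geq w$ on $\partial B_1$ and $v \equiv 0$ in $B_{1/2}$. Then $\bar w = \min\{w, v\}$ in $B_1$, extended by $w$ outside, lies in $\mathcal{A}$. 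Since $\bar w$ vanishes on a neighborhood of $0$ while $w$ is positive at points arbitrarily close to $0$, we get $\bar w < w$ somewhere, contradicting minimality.

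For the barrier I would use a radial profile $v(x) = g(|x|)$ with $g = 0$ on $[0, 1/2]$, $g$ increasing on $[1/2,1]$ and $g(1) \ge \epsilon$. The key observation is that the interior equation makes large gradients harmless. Indeed, $\Delta v \le h(\nabla v)/v$ holds as soon as $\Delta v \le 0$ and $\nabla v \notin D$ (so that $h(\nabla v) \geq 0$). This is exactly the mechanism behind the supersolutions $\psi_{C_{\mathrm{sup}},\phi}$ in Lemma \ref{lemma sup and sub interior}.

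A first attempt is a concave profile in $d = |x| - 1/2$ such as $g = C_1 d^{\beta}$, where $\beta<1$ is chosen so that the radial concavity $\beta(\beta-1)d^{\beta-2}$ dominates the mean curvature term $\frac{n-1}{|x|}\beta d^{\beta-1}$ near $d=0$. However, the free boundary condition must also hold on $\partial B_{1/2}$: $v$ must not be touched from below by $f(\nu)d + \epsilon' d^{1+\mu}$. With slope $+\infty$ at the free boundary, nothing of linear growth can touch $v$ from below there, so the supersolution free boundary condition holds vacuously. Away from $\partial B_{1/2}$, for $d$ of order one, the concavity may fail to beat the curvature term. To handle this I would take the minimum with a large harmonic-type supersolution, or simply restrict attention to an annulus where the concavity dominates, and then choose $C_1$ so that $g \ge \epsilon$ already at $d = c$. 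Alternatively, one can work with the translated half-space profile $\min_{e}\, C_1 ((x\cdot e - 1/2)_+)^{\beta}$; this is a minimum of one-dimensional supersolutions, which is again a supersolution, but its zero set is only a polytope-like region containing $0$. Either version gives $v=0$ near $0$, and $v\ge \epsilon\ge w$ on $\partial B_1$ once $\epsilon$ is small relative to $C_1$.

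The main obstacle is verifying that $\min\{w,v\}$ is a genuine supersolution of \eqref{eq main problem}, and in particular that the supersolution free boundary condition holds at points of $\partial\{v>0\}$ where $w>0$. Where $\bar w=w$ the condition is inherited from $w$. Where $\bar w = v$ and the free boundary belongs to $v$, a touching test function $f(\nu)d+\epsilon'd^{1+\mu}\le \bar w\le v$ would force $v$ to have finite slope there, which contradicts the infinite slope of $v$. Interior supersolution properties of a minimum of two supersolutions are standard in the viscosity sense. Finally, the constants must be chosen in the right order: first fix $\beta$ and the annulus (depending on $n$), then $C_1$ large enough that $\nabla v\notin D$ throughout the annulus, using that $D$ is bounded, and then $\epsilon$ small. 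With these choices the contradiction gives $\sup_{B_r(x_0)} w \ge c r$.
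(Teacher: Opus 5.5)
Your overall scheme is essentially the paper's: Lipschitz rescaling, a contradiction hypothesis that $\sup w$ is small, a competitor $\min\{w,v\}$ with $v\equiv 0$ on an inner ball, and a contradiction with the minimality of $w$ in $\mathcal{A}$. The gap is the barrier. It sits on the wrong side of the free boundary condition, so $\min\{w,v\}\notin\mathcal{A}$.

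The supersolution condition \eqref{eq free boundary condition supersol} forbids being touched \emph{from below} by $f(\nu)d+\epsilon' d^{1+\mu}$. Your profile $v=C_1(|x|-\tfrac12)_+^{\beta}$ with $\beta<1$ is \emph{larger} than every function of linear growth near $\partial B_{1/2}$. Take $\Gamma=\partial B_{1/2}$ with outward normal. Then $f(\nu)d+\epsilon' d^{1+\mu}\le C_1 d^{\beta}$ for small $d$, with equality on $\Gamma$, so $v$ is touched from below at every point of $\partial B_{1/2}$. Infinite slope makes the \emph{subsolution} free boundary condition vacuous, not the supersolution one.

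The same reversal appears in your last paragraph. The inequality $f(\nu)d+\epsilon' d^{1+\mu}\le v$ only forces the slope of $v$ to be at least $f(\nu)$, and infinite slope satisfies that. Near any point of $\partial B_{1/2}$ where $w>0$, the function $\min\{w,v\}$ equals $v$ outside $B_{1/2}$ and vanishes inside. So it violates the supersolution free boundary condition there.

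Your alternative $\min_e C_1((x\cdot e-\tfrac12)_+)^{\beta}$ is identically zero. The zero set of a minimum is the union of the zero sets, and every $x$ has some $e$ with $x\cdot e\le 0$.

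Tuning the profile within your large-gradient mechanism cannot fix this:
\begin{itemize}
\item Making $\nabla v\notin\bar D$ means slope larger than $f(\nu)$ at the free boundary, which is exactly what the supersolution condition excludes.
\item A slope strictly smaller than $f(\nu)$ puts $\nabla v$ in $D$, where $h(\nabla v)/v\approx -c/d$. For a profile in $d$, integrating $\partial_{dd}v\lesssim -c/d$ then makes the slope blow up as $d\to 0$, a contradiction.
\end{itemize}
So the slope must be exactly $f(\nu)$, corrected by a negative term of order $d^{1+\mu}$. This is where \eqref{eq mu condition} is indispensable, and your proposal never uses it.

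The paper's barrier is $v=f(\nu)d-\epsilon d^{1+\mu}$ with $d=\mathrm{dist}(\cdot,B_{r-t}(x_0))$ in $B_r(x_0)$ and $v=0$ elsewhere. It has three properties:
\begin{itemize}
\item By Lemma \ref{Lemma Test Functions}, with $\mu$ as in \eqref{eq mu full condition}, it is a supersolution of \eqref{eq interior} in the whole layer $B_r(x_0)\setminus B_{r-t}(x_0)$ once $t\le cr$. In that thin layer the curvature term $\kappa f^2 d$ is absorbed by the $\epsilon d^{\mu}$ term.
\item It satisfies the supersolution free boundary condition: along the normal ray at a touching point, any $f(\nu)d+\epsilon' d^{1+\mu'}$ exceeds $f(\nu)d-\epsilon d^{1+\mu}$.
\item On $\partial B_r(x_0)$ it is comparable to $t$, so it dominates $w\le\delta r$ when $\delta\ll t/r$.
\end{itemize}
With this barrier in place of yours, the rest of your argument goes through.
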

\begin{proof}
Suppose, by way of contradiction, that the conclusion is false. Then, there exists some $x_0 \in FB(w)$, $r > 0$, and some $\delta \leq c$ such that
\begin{equation*}
w(x) \leq \delta r
\end{equation*}
for any $x \in \partial B_r(x_0)$. Then, for $\epsilon, t \leq c$ and $\mu > 0$, let
\begin{equation*}
v(x) = f(\nu(x))d(x) - \epsilon d(x)^{1 + \mu},
\end{equation*}
where 
\begin{align*}
d(x) = 
\begin{cases}
dist(x, B_{r - t}(x_0)) \qquad &\text{for $x \in B_{r}(x_0)$}\\
0 \qquad &\text{ otherwise},
\end{cases}
\end{align*}
$\nu(x) = \nabla d(x)$, and $\mu$ is chosen so that $v$ is a supersolution to \eqref{eq main problem} in a neighborhood of $\partial B_{r - t}(x_0)$ per Lemma \ref{Lemma Test Functions}. Then, it follows that upon making $\delta, t$ small enough, we may guarantee that $v \geq w$ on $B_{r}(x_0)$. As a result, the function
\begin{equation*}
\bar{w} = 
\begin{cases}
0 \quad \text{ for } x \in B_{r - t}(x_0) \\
\min\{v, w\} \quad \text{ for } x \in B_{r}(x_0) \setminus B_{r-t}(x_0) \\
w \quad \text{ otherwise}
\end{cases}
\end{equation*}
is a supersolution to the Dirichlet problem for the free boundary problem \eqref{eq main problem} with boundary datum $\phi$ that is strictly less than $w$, a contradiction.
\end{proof}

\begin{remark}\label{rmrk convergence for fixed right-hand side}
We note that, due to the uniform growth and non-degeneracy bounds, a sequence of solutions $\{w_k\}$ to \eqref{eq main problem} such that each solution $w_k$ satisfies the interior equation with a fixed right-hand side $h$ and a fixed star-shaped domain $D$ admits a convergent subsequence to a limiting continuous function $w$, which is itself a viscosity solution to the interior equation \eqref{eq interior} in $B_1$. Moreover, the non-degeneracy estimate guarantees that the free boundaries $FB(w_k)$ converge to $FB(w)$ in Hausdorff distance.
\end{remark}
\section{Improvement of Flatness}\label{sec flatness improv}
In this section, we show the improvement of flatness argument at regular free boundary points given in Theorem \ref{thm flatness improv}. We mostly follow the strategy of De Silva in \cite{de2011free}. The first step in such argument is usually known as a semi-Harnack inequality, which we present below.  
\begin{lemma}\label{Lemma SemiHarnack}
Let $w$ be a solution to \eqref{eq main problem} in $B_1$ with $e_n \in \partial D$. Then, there exists a universal parameter $\epsilon \leq c$ such that the following claims hold.
\begin{enumerate}
    \item If the one-sided flatness assumption
    \begin{equation}\label{eq flatness 1}
    w \geq x_n \qquad \text{ in $B_1$}
    \end{equation}
    holds and 
    \begin{equation*}
    w = x_n + \epsilon \qquad \text{ at $x_0 = \frac{e_n}{2}$},
    \end{equation*}
    then there exists some $r > 0$ universal such that
    \begin{equation*}
    w \geq x_n + c\epsilon \qquad \text{ in $B_r$.}
    \end{equation*}
    \item If the one-sided flatness assumption
    \begin{equation}\label{eq flatness 2}
    w \leq (x_n + 2\epsilon)_{+} \qquad \text{ in $B_1$}
    \end{equation}
    holds and 
    \begin{equation*}
    w = x_n + \epsilon \qquad \text{ at $x_0 = \frac{e_n}{2}$},
    \end{equation*}
    then there exists some $r > 0$ universal such that
    \begin{equation*}
    w \leq x_n + c\epsilon \qquad \text{ in $B_r$.}
    \end{equation*}
\end{enumerate}
\end{lemma}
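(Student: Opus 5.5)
We follow De Silva's strategy from \cite{de2011free}: first propagate the gap from $x_0=\frac{e_n}{2}$ to a fixed interior region using the Harnack inequality, then carry it down to the free boundary with a barrier built from the test functions of Lemma \ref{Lemma Test Functions}. Since $e_n\in\partial D$, we have $f(e_n)=1$, so $x_n$ is the flat profile. We describe part (1); part (2) is symmetric, with supersolutions in place of subsolutions and the subsolution free boundary condition \eqref{eq free boundary condition subsol} in place of the supersolution one.

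\textbf{Step 1 (interior gap).} By \eqref{eq flatness 1}, $w>0$ in $\{x_n>0\}\cap B_1$. In particular $w\ge \frac14$ in the ball $B_{1/4}(x_0)$, so there $w$ solves \eqref{eq interior} with a uniform positive lower bound. The $C^{2,\alpha}$ estimate \eqref{eq interior regularity}, applied after rescaling and combined with Corollary \ref{Corollary A priori}, then gives $|\nabla w|\le C$ and $|\Delta w|\le C$ on $B_{1/8}(x_0)$. Set $v=w-x_n\ge0$. Then $v$ satisfies $\Delta v = h(\nabla w)/w$. Writing $h(\nabla w)=h(e_n+\nabla v)$ and using $h(e_n)=0$ with $h\in C^1$, we obtain a linear equation
\[
\Delta v = b(x)\cdot\nabla v, \qquad |b|\le C .
\]
The Harnack inequality for nonnegative solutions of this uniformly elliptic equation yields $v\ge c\,v(x_0)=c\epsilon$ on $B_{1/20}(x_0)$. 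The additive constant in Lemma \ref{Lemma Harnack Inequality} would be harmful at scale $\epsilon$, which is why we linearize here instead of quoting that lemma directly.

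\textbf{Step 2 (barrier).} Following De Silva, we take a radial bump $\varphi$ on the annulus $A=B_{3/4}(x_0)\setminus \overline B_{1/20}(x_0)$ with $\varphi=0$ on $\partial B_{3/4}(x_0)$, $\varphi=1$ on $\partial B_{1/20}(x_0)$, $\varphi>0$ in $A$, and $\varphi\ge c$ on a fixed neighborhood $B_r$ of the origin. For $t\in[0,c]$ we consider the family
\[
\psi_t(x)=f(\nu_t(x))\,d_t(x)+\epsilon^{*}d_t(x)^{1+\mu},
\]
where $d_t$ is the distance to the surface $\{x_n=-c\epsilon\varphi(x)+ t\epsilon\}$, or equivalently a perturbed half-space $x_n + c\epsilon\varphi - t\epsilon$ corrected into the test-function form of Lemma \ref{Lemma Test Functions}. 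The aim is that each $\psi_t$ be a strict subsolution in its positivity set inside $A$. This comes from the curvature term $\kappa f d$ coming from $\Delta\varphi$: we choose $\varphi$ with $\Delta\varphi\ge c>0$, for instance $\varphi$ built from $|x-x_0|^{-N}$, and we check that this term dominates the $O(\epsilon^2)$ error terms coming from $h(\nabla\psi_t)$.

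\textbf{Step 3 (sliding).} At $t=0$ we have $\psi_0\le x_n\le w$. We increase $t$ continuously. Contact cannot occur on $\partial B_{3/4}(x_0)$, because there $\varphi=0$ and we only need $w\ge x_n$. Contact cannot occur on $\partial B_{1/20}(x_0)$, by Step 1, provided $c$ is small. Contact at an interior positive point is ruled out by the comparison principle for the strict subsolution. Contact at a free boundary point is ruled out by the supersolution free boundary condition \eqref{eq free boundary condition supersol}, since $\psi_t$ has exactly the forbidden form $f(\nu)d+\epsilon^*d^{1+\mu}$. Hence the slide continues up to $t=c$, and on $B_r$ we get $w\ge x_n + c\epsilon\varphi\ge x_n+c'\epsilon$.

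The main obstacle is Step 2. The correction term $f(\nu)-1$, with $\nu\ne e_n$, is of order $\epsilon$ and must be absorbed. Moreover, the $d^{1+\mu}$ term has to be both small, so that it fits under $w$ with an $O(\epsilon)$ margin, and strong enough to make $\psi_t$ a strict subsolution near its zero set, where $\Delta\varphi$ alone may not suffice. We would first try taking $\epsilon^*=\epsilon^{1/2}$ and restricting the use of Lemma \ref{Lemma Test Functions} to the region $d_t\le c$.
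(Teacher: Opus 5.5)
Your Step 1 is fine and is essentially the paper's first step: away from $\{x_n=0\}$ you linearize $h(\nabla w)=h(e_n+\nabla(w-x_n))$ and apply the classical Harnack inequality to $w-x_n$. The gap is in Steps 2--3, and it is structural.

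There is also a sign slip: with $d_t$ the distance to $\{x_n=-c\epsilon\varphi+t\epsilon\}$ you get $\psi_0\approx x_n+c\epsilon\varphi\ge x_n$, so you presumably mean the zero set of $x_n+c\epsilon(\varphi-1)+t\epsilon$. The problem below remains after this correction.

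\textbf{The curvature term has the wrong sign.} For $\psi_t=f(\nu_t)d_t+\epsilon^*d_t^{1+\mu}$, the quantity that replaces $\Delta\varphi$ in De Silva's argument is not $\Delta\varphi$. $\Delta d_t(x)$ is governed by the curvature of the single surface $\Gamma_t$ at the foot point of $x$. That foot point lies within $O(\epsilon)$ of $\{x_n=0\}$, so to leading order $\Delta d_t$ equals $c\epsilon\,\Delta_{x'}\varphi(x',0)$, a tangential Laplacian.

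The term $h(\nabla\psi_t)$ is also of order $\epsilon d_t$, not $O(\epsilon^2)$. Using $h(f(\nu)\nu)\equiv0$, it only replaces $f^2\Delta d_t$ by $\mathrm{tr}(A\,D^2d_t)$ with $A$ positive definite, so it does not change the sign question.

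For $\varphi$ radial and decreasing in $|x-x_0|$, $\varphi(x',0)$ has a strict maximum at $x'=0$. So $\Gamma_t$ is convex below $x_0$, and the curvature term has the wrong sign exactly near the origin. No other bump fixes this. To produce a gain, $\varphi(\cdot,0)$ must be larger near $x'=0$ than on the lateral boundary, and then by the maximum principle $\mathrm{tr}(A\,D^2_{x'}\varphi)<0$ somewhere. Hence strict subsolution-ness has to come from the $d^{1+\mu}$ term, with $\epsilon^*$ at least a large universal multiple of the curvature of $\Gamma_t$.

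\textbf{The $d^{1+\mu}$ term breaks your annulus comparison.} On $\partial B_{3/4}(x_0)$ the part inside $B_1$ reaches heights $x_n$ of order one, so $d_t$ is of order one there. The only information there is $w\ge x_n$, and the margin $(c-t)\epsilon$ vanishes at the end of the slide. The term $\epsilon^*d_t^{1+\mu}$ destroys the comparison, and so does $(f(\nu_t)-1)d_t$, which is of order $c\epsilon d_t$ with no sign.

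Your tentative remedy does not close this. With $\epsilon^*=\epsilon^{1/2}$ one has $\epsilon^*d_t^{1+\mu}\le c\epsilon$ only for $d_t\le (c\epsilon^{1/2})^{1/(1+\mu)}$, not on $\{d_t\le c\}$. Working in such an $\epsilon$-thin region would require an $\epsilon$-gap on its upper face, which is what you are trying to prove.

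\textbf{The missing idea, which is what the paper does.} Work only in a slab $\{|x_n|\le\sigma\}$ of universal height, and take the gap $c(\sigma)\epsilon$ on $\{x_n=\sigma\}$ from the Harnack step. Then choose a barrier in which the $d^{1+\mu}$ amplitude, the curvature, $|f(\nu)-1|$ and the gain all scale with one parameter $c_1\epsilon$. The paper uses $u=f(\nu)d+c_1\epsilon d^{1+\mu}$, with $d$ the distance to the ball of radius $R=M/(c_1\epsilon)$ centered at $Re_n$. Then:
\begin{enumerate}
\item the ball's curvature, of order $c_1\epsilon/M$, is beaten by the $d^{1+\mu}$ term for $M$ universal;
\item at $|x'|^2=\frac12$ the ball has risen by about $c_1\epsilon/(4M)$, which gives the lateral margin once $\sigma$ is small compared to $1/M$;
\item the top margin follows from $c_1\le c(\sigma)/4$;
\item one slides the translates $u(x+te_n)$ for $0\le t\le c_2$, with $c_2$ comparable to $c_1\epsilon/M$.
\end{enumerate}
Since $M$ and $\sigma$ do not depend on $c_1$, the choice of constants is not circular.

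A bump-type barrier could be rescued the same way. You would restrict to the slab, take $\epsilon^*$ a large universal multiple of the bump amplitude, and stop the slide before the lateral margin is used up. The proposal as written does not do this.
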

\begin{proof}
We prove the first claim; the second one follows analogously. First, for a positive universal constant $\sigma \leq c$, consider the set

\begin{equation*}
\mathcal{C}= \{x = (x',x_n): |x'|^{2} \leq \frac{1}{2} \text{ and } x_n \geq \sigma\}. 
\end{equation*}
\noindent
Then, we notice that by the flatness assumption \eqref{eq flatness 1},
\begin{equation*}
\sigma \leq w \leq C, \qquad |\nabla w| \leq C,     \qquad \text{ in $\mathcal{C}$}.
\end{equation*}
As a result, the interior equation \eqref{eq interior} can be written as a uniformly elliptic equation with bounded, measurable coefficients in $\mathcal{C}$. Hence, the classical Harnack inequality applies, and we may thus conclude that

\begin{equation}\label{eq flat Harnack}
w \geq x_n + c(\sigma)\epsilon \quad \text{ in } \mathcal{C}. 
\end{equation}

Now, we prove a similar inequality when $|x_n| \leq \sigma$. For a large radius $R$, let $\tilde{B}$ be the circle of radius $R$ centered at $Re_n$, and let 
\begin{equation*}
\mathcal{C}' = \tilde{B} \cap \{|x_n| \leq \sigma, |x'|^2 \leq \frac{1}{2}\}.    
\end{equation*}
Let 

\begin{align*}
d(x) = 
\begin{cases}
dist(x, \partial \tilde{B}) \qquad & \text{ for } x \in \tilde{B} \\
0 \qquad & \text{ otherwise},
\end{cases}
\end{align*}
and $\nu(x) = \nabla d(x)$. Now, we will set $R = \frac{M}{\epsilon c_1}$, where $M$ and $c_1$ are large and small, respectively, universal parameters to be precised later in the proof. Next, we choose an exponent $\mu$ as in Lemma \eqref{Lemma Test Functions} so that the function
\begin{equation*}
u(x) = f(\nu(x))d(x) + c_1\epsilon d(x)^{1 + \mu}, \end{equation*}
is a subsolution to the interior equation \eqref{eq interior} in a neighborhood of $\tilde{B}$. Our goal is to show that there exists another small universal parameter $c_2$ such that 
\begin{equation*}
w(x) \geq u(x + c_2e_n)     
\end{equation*}
on $\mathcal{C'}$. First, choose $M$ large enough as a function of $c_1$ so that
\begin{equation}\label{eq parameter M}
1 - c_1\epsilon \leq f(\nu(x)) \leq 1 + c_1\epsilon \qquad \text{ on $\mathcal{C'}$.} 
\end{equation}
Now, we show that our parameters can be chosen so that $w > u$ on $\partial \mathcal{C}'$. First, consider the part of $\partial \mathcal{C'}$ where $|x'|^{2} = \frac{1}{2}$. By a direct computation, it may be checked that
\begin{equation*}
\{(x', x) : x_n = \frac{|x'|^2}{2R}\} \cap \tilde{B} = \emptyset.    
\end{equation*}
Hence, we may conclude that
\begin{equation}\label{eq d on x'}
d(x) \leq (x_n - \frac{|x'|^{2}}{2R})_{+}. 
\end{equation}
Using this and the flatness assumption \eqref{eq flatness 1}, 
we will show that
\begin{equation*}
u \leq w - c_2 \qquad \text{   on   } \mathcal{C} \cap \{|x'|^2 = \frac{1}{2}\}.   
\end{equation*}
 Indeed, applying \eqref{eq parameter M} and \eqref{eq d on x'},
\begin{equation*}
u(x) \leq (1 + c_1\epsilon)(x_n - \frac{c_1}{4M}) + c_1\epsilon(x_n - \frac{c_1}{4M})^{1 + \mu} \leq (1 + 2c_1\epsilon)(x_n - \frac{c_1}{4M}).
\end{equation*}
on $\mathcal{C'} \cap |x'|^2 = \frac{1}{2}$. At the same time, the following holds on such region as well due to the flatness assumption \eqref{eq flatness 1}
\begin{equation*}
w(x) - c_2 \geq x_n - c_2.
\end{equation*}
Hence, we will have $u \leq w - c_1$ as long as 
\begin{equation*}
c_2 \leq \frac{c_1}{4M} + \frac{2c_1^2\epsilon}{4M} - 2c_1\epsilon x_n.
\end{equation*}
Note then that on this region, $x_n \leq \frac{c_1}{4M}$, and so the above equation simplifies to
\begin{equation}\label{eq c_2 condition 1}
c_2 \leq \frac{c_1}{4M}. 
\end{equation}

Now, we consider $\mathcal{C'} \cap \{x_n = \sigma\}$. Here, we simply use the fact that
$d(x) \leq x_n^{+}$ and the Harnack inequality \eqref{eq flat Harnack}. On this region, we then have that
\begin{equation*}
u(x) \leq (1 + 2c_1\epsilon)x_n
\end{equation*}
and that 
\begin{equation*}
w(x) - c_2\geq x_n + c(\sigma)\epsilon - c_2.
\end{equation*}

So, we will have $u \leq w - c_1$ in $\mathcal{C}' \cap \{x_n = \sigma\}$ as long as
\begin{equation}\label{eq c_2 condition 2}
c_2 \leq c(\sigma)\epsilon - 2c_1\epsilon. 
\end{equation}
In conclusion, \eqref{eq c_2 condition 1} and \eqref{eq c_2 condition 2} yield $u \leq w - c_2$ on $\partial \mathcal{C}'$ as long as 
\begin{equation}\label{eq c_2 condition}
c_2 \leq \min\{\frac{c_1}{4M}, c(\sigma)\epsilon - 2c_1\epsilon\}.
\end{equation}
Hence, upon setting
\begin{equation*}
c_1 \leq \frac{c(\sigma)}{4}, M = M(c_1),
\end{equation*}
and setting $\sigma$ as a function of $\epsilon$ so that $u$ is indeed a subsolution in $\mathcal{C'}$, we can ensure that
\begin{equation*}
\min\{\frac{c_1}{4M}, c(\sigma)\epsilon - 2c_1\epsilon\} > 0,
\end{equation*}
and so $c_2$ may be chosen as in \eqref{eq c_2 condition}.

Now that it is known that $u \leq w - c_2$ on $\partial \mathcal{C'},$ we may compare $w$ with a sequence of translations of $u$, namely $u(x + te_n)$ for $t \leq c_2$, and conclude that $w(x) \geq u(x + c_2e_n)$ for $x \in \mathcal{C}'$. This then implies the desired inequality in a neighborhood of the origin. 
\end{proof}

\begin{remark}\label{remark consequence of semiharnack}
Let $\{w_j\}$ be any sequence of solutions to \eqref{eq main problem} that satisfy the flatness assumption with an associated sequence of flatness parameters $\{\epsilon_j\}$. We now explain why the sequence of rescaled functions
\begin{equation*}
\bar{w_j} := \frac{w_j - x_n}{\epsilon_j}    
\end{equation*}
has the required compactness to converge to a limiting function $\bar{w}$ solving a linearized free boundary problem. It follows from standard arguments as in \cite{de2021certain} that we may use Lemma \ref{Lemma SemiHarnack} to show an improvement of flatness at fixed scale: namely, if 
\begin{equation*}
(x_n - a_1)_{+} \leq w \leq (x_n + a_2)_{+} \quad \text{ in $B_1$}
\end{equation*}
for some $a_1, a_2 > 0$ and
\begin{equation*}
|a_2 - a_1| \leq c,
\end{equation*}
then
\begin{equation*}
(x_n - (1 - c)a_1)_{+} \leq w \leq (x_n + (1 - c)a_2)_{+} \quad \text{ in $B_c$}.
\end{equation*}
We may then iterate this to gain the desired compactness. Indeed, it follows from the improvement of flatness at fixed scale (see Remark 5.2 in \cite{de2021certain}) that the sequence of rescalings $\bar{w_j}$ has a uniform modulus of continuity in the set $|y - x| \geq \delta$ (where such modulus depends on $\delta$).This then implies the uniform convergence of such sequence of rescaled functions to a limiting function $\bar{w}$ solving a certain free boundary problem, which we detail in the next result. 
\end{remark}

As in \cite{de2021certain}, the strategy for proving Theorem \ref{thm flatness improv} will be to show that, if the conclusion of such result is not true, then the rescaled solutions $\bar{w_j}$, which already satisfy the interior equation
\begin{equation}
\Delta \bar{w_j} = \frac{h(e_n + \epsilon_j\nabla w_j)}{\epsilon_j(x_n + \epsilon_jw_j)},
\end{equation}
\noindent
will converge to a solution $\bar{w}$ of a linearized Neumann problem of the type
\begin{align}\label{eq linearized}
\begin{cases}
\Delta \bar{w} = \frac{v \cdot \nabla h(e_n)}{x_n} \quad &\text{ in } B_{1}^{+} \cap \{x_n > 0\}, \\
\bar{w}_{\omega,s} := \frac{\bar{w}(x_0 + t\omega) - \bar{w}(x_0)}{t^{1-s}} = 0 &\text{ on } B_1^{+} \cap \{x_n = 0\},
\end{cases}
\end{align}
where $v$ is a fixed non-zero vector and $\omega = \frac{v}{|v|}$. In our case, $s = -\nabla h(e_n) \cdot e_n < 1$ by assumption \eqref{eq mu condition} and $v = \nabla h(e_n)$. In the regime $s < 1$, $\bar{w}$ being a viscosity solution to \eqref{eq linearized} entails the following:
\begin{enumerate}
    \item $\bar{w}$ is continuous in $B_1$ and is a viscosity solution to the interior equation wherever it is positive
    \item $\bar{w}$ may not be touched from below $(+)$ or by above $(-)$ at a point $x_0 \in B_1^{+} \cap \{x_n = 0\}$ by a function of the form
    \begin{equation*}
    \phi(x',x_n) = A|x' - \frac{\omega'}{\omega_n} x_0 - y_0'|^{2} + B \pm px_n^{1-s}
    \end{equation*}
    for any fixed $A, B \in \mathbb{R}$, $p > 0$ and $y_0' \in \mathbb{R}^{n-1}$ ($y_0'$ is such that $y_0' \cdot e_n = 0$). 
\end{enumerate}
\begin{proof}[Proof of Theorem \ref{thm flatness improv}]
Suppose by way of contradiction that there exists a sequence of solutions $\{w_j\}$ to \eqref{eq main problem}, along with an associated sequence of star-shaped domains $\{D_j\}$ and an associated sequence of right-hand sides $\{h_j\}$ that satisfy Assumption \ref{assumption standard assumption},  as well as a sequence $\{\epsilon_j\}$ converging to $0$ such that the conclusion is not true. We may assume that $\{w_j\},\{h_j\},$ and $\{D_j\}$ have uniform bounds that allow, up to the extraction of a subsequence, 
\
\begin{equation*}
w_j \rightarrow w \quad h_j \rightarrow h \quad  D_j \rightarrow D.
\end{equation*}
\noindent
Then, consider the sequence of rescalings 
\begin{equation*}
\bar{w_j} = \frac{w_j - x_n}{\epsilon_j}.
\end{equation*}
Remark \ref{remark consequence of semiharnack} implies that $\bar{w_j}$ converges, up to a subsequence, to a limiting function $\bar{w}$ that satisfies $\bar{w}(0) = 0$ and $|\bar{w}| \leq 1$. We then compute that

\begin{equation}
\Delta \bar{w_j} = \frac{h(e_n + \epsilon_j\nabla \bar{w_j})}{\epsilon_j(x_n + \epsilon_j\bar{w}_j)}.
\end{equation}
\noindent
Then, since we are assuming that $e_n \in \partial D_j$ for all $j$, we see that
\begin{equation*}
h_j(e_n + \epsilon_j\nabla \bar{w}_j) = \epsilon_j\nabla h_j(e_n) \cdot\nabla \bar{w}_j + o(\epsilon_j).
\end{equation*}
Consequently, we can conclude that $\bar{w}$ is a viscosity solution in its positivity set to the interior equation appearing in \eqref{eq linearized}. Now, we must show it satisfies the free boundary condition in \eqref{eq linearized}. For simplicity, assume $\bar{w}(0) = 0$ and that it can be touched from below by
\begin{equation*}
\phi(x) = A|x' - \frac{\omega'}{\omega_n}x_n|^2 + px_n^{1-s}
\end{equation*}
for some $A, p > 0$. For a parameter $\epsilon > 0$, let $B'$ be the ball of radius $\frac{1}{2\epsilon A}$ centered at $\frac{e_n}{2\epsilon A}$. As in the previous Lemma \ref{Lemma SemiHarnack}, let 
\begin{align*}
d(x) = 
\begin{cases}
dist(x, \partial B') \qquad & \text{ for } x \in B' \\
0 \qquad & \text{ otherwise},
\end{cases}
\end{align*}
and let $\nu(x) = \nabla d(x)$. Then, consider
\begin{equation*}
v(x) = f(\nu(x))d(x) + p\epsilon d(x)^{1-s},
\end{equation*}
which is a subsolution in a neighborhood of $\partial B'$ per Lemma \ref{Lemma Test Functions}.
Here, we note that
\begin{equation*}
d(x) = (x_n - \epsilon A|x'|^2)_{+} + o(\epsilon^2).
\end{equation*}
Also, we calculate that
\begin{equation*}
\frac{f(\nu(x))d(x) - x_n}{\epsilon} = -A|x'|^2 - 2Ax_n(\nabla f(e_n) \cdot x') + o(\epsilon).
\end{equation*}
Additionally, note that $\frac{\omega'}{\omega_n} = -\nabla f(e_n)$, which can be seen by differentiating
\begin{equation*}
h(f(\nu)\nu) = 0
\end{equation*}
at $\nu = e_n$. It then follows that
\begin{align*}
v_{\epsilon} := \frac{v(x) - x_n}{\epsilon} = -A|x'|^2 - 2Ax_n(\nabla f(e_n) \cdot x') + px_n^{1-s} + o(\epsilon) \\ =
-A|x' - \frac{\omega}{\omega_n}x_n|^2 - |\frac{\omega}{\omega_n}|^2x_n^2 + px_n^{1-s} + o(\epsilon)
\end{align*}
converges to a function that touches $\phi$, and thus $w$, by below at the origin. Then, the function $v_{\epsilon}(x - Ce_n)$ will for some values of $C$ and $\epsilon$ touch some $w_j$ by below at some point close to the origin, thus yielding a contradiction.

Finally, we may apply the exact same reasoning of Theorem 7.4 in \cite{de2022alt} to reach a contradiction. For completeness, we sketch out the main steps. 

Since it is now known that $\bar{w}$ solves the linearized problem \eqref{eq linearized}, we may show the $C^{1,\alpha}$ estimate
\begin{equation}\label{eq C^{1 alpha} linear}
|\bar{w} - a \cdot x| \leq C|x|^{1 + \alpha}   
\end{equation}
for a universal $\alpha > 0$. By the convergence of the $\bar{w}_j$'s, we can conclude that for $r \leq c$
\begin{equation*}
|\bar{w}_j - a \cdot x| \leq Cr^{1 + \alpha} \quad \text{in $B_r$},  
\end{equation*}
for some fixed vector $a$ such that $a \cdot \omega = 0$. Equivalently, we have that
\begin{equation}
x_n + \epsilon_j(a \cdot x) -\epsilon_{j}\frac{r}{4} \leq w_j \leq x_n + \epsilon_j (a \cdot x) + \epsilon_j \frac{r}{4} \quad \text{ in } B_r.
\end{equation}
Then, letting $\nu_j = \frac{e_n + \epsilon_j a}{\sqrt{\epsilon_j^2|a'|^2 + (a_n + 1)^2}}$, we may use $a \cdot \omega = 0$ to conclude that
\begin{equation}
x_n + \epsilon_j(a \cdot x) - \frac{\epsilon_j r}{4} \leq f_j(\nu_j)(x \cdot \nu_j) \leq x_n + \epsilon_j(a \cdot x) + \frac{\epsilon_j r}{4} \quad \text{ in } B_r.   
\end{equation}
Hence, we conclude that
\begin{equation}
f_j(\nu_j)(x \cdot \nu_j) - \frac{\epsilon_jr}{2} \leq w_j \leq (f_j(\nu_j)(x \cdot \nu_j) + \frac{\epsilon_jr}{2})_{+} \quad \text{ in } B_r, 
\end{equation}
and we have reached a contradiction.
\end{proof}

It then follows from standard arguments that the free boundary is a $C^{1,\alpha}$ graph near the origin for some $\alpha > 0$. However, we have only shown that the free boundary is smooth near points that satisfy a flatness assumption, and we cannot quantify the number of flat points in general. Yet, in the next proposition, we will show that if the free boundary admits a tangent ball from the zero set at a free boundary point $x_0$, then the free boundary is flat at $x_0$.
\begin{proposition}\label{Proposition Exterior Ball}
Let $w$ be a solution to \eqref{eq main problem} with $0 \in FB(w)$. Further, suppose that $e_n \in \partial D$ and that there exists some $r > 0$ such that $B_r(-re_n) \subset \{w = 0\}$. 
Then,  we have that for all $\epsilon$ sufficiently small,
\begin{equation}\label{eq flatness}
(x_n - \epsilon)_{+} \leq w(x) \leq (x_n + \epsilon)_{+} \qquad \text{ in } B_{2\epsilon}.
\end{equation}
\end{proposition}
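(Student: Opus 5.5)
The natural route is a blow-up argument: rescale at the origin, identify the limit as the one-dimensional profile, and transfer the flatness back to $w$ for small $\epsilon$. Since \eqref{eq flatness} is invariant under the Lipschitz rescaling \eqref{eq rescaling} (with $\epsilon$ replaced by $\epsilon/r$ and $B_{2\epsilon}$ by $B_{2\epsilon/r}$), the statement is equivalent to $w_\rho(x)=w(\rho x)/\rho$ being uniformly close to $(x_n)_+$ in a fixed ball for all small $\rho$. That is, every blow-up limit of $w$ at $0$ should equal $(x_n)_+$.

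\textbf{Step 1: compactness.} By Proposition \ref{prop optimal growth} (Theorem \ref{thm Lipschitz Regularity}), the rescalings $w_\rho$ are uniformly Lipschitz in $B_1$. Along a subsequence they converge uniformly to some $w_0$, and by Remark \ref{rmrk convergence for fixed right-hand side} the limit $w_0$ solves \eqref{eq main problem}. The exterior ball $B_{r/\rho}(-(r/\rho)e_n)$ expands to the half space $\{x_n<0\}$, so $w_0\equiv 0$ on $\{x_n\le 0\}$ and $0\in \partial\{w_0>0\}$.

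A caveat: if $w$ is not known to be nondegenerate, $w_0$ could vanish identically. So we also need a lower bound $w(te_n)\ge ct$. I would get it from the free boundary condition directly, as follows. If $w(te_n)=o(t)$ along a sequence, then the Harnack inequality (Lemma \ref{Lemma Harnack Inequality}) in the interior of $\{x_n>0\}$ makes $w_\rho$ small on compact subsets. Then the supersolution test function $f(\nu)d-\epsilon d^{1+\mu}$ of Lemma \ref{Lemma Test Functions} can be placed with $d$ measured from a sphere slightly inside the exterior ball, slid until it touches $w$ from above, and this contradicts the subsolution condition. If that argument fails, I would restrict to the interesting case $w_0\not\equiv0$.

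\textbf{Step 2: classify $w_0$.} We have a solution in $\{x_n>0\}$ vanishing on $\{x_n=0\}$, Lipschitz, with $\partial^-_{\nu\nu}w_0=0$ on the hyperplane. Take test functions $f(e_n)x_n\pm s x_n^{1+\mu}$, or their versions built on large balls tangent to $\{x_n=0\}$ from inside as in Lemma \ref{Lemma SemiHarnack}, and compare with $w_0$. This forces $\liminf$ and $\limsup$ of $w_0/x_n$ at boundary points to equal $f(e_n)=1$ (using $e_n\in\partial D$). Next, apply a Liouville-type argument to $w_0-x_n$, which is sublinear. A global version of the improvement-of-flatness iteration (Theorem \ref{thm flatness improv}) applied at large scales then gives $w_0=f(\nu)(x\cdot\nu)_+$. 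The exterior half-space then forces $\nu=e_n$. Alternatively, one can first show that $w_0$ is trapped between $(x_n-\epsilon)_+$ and $(x_n+\epsilon)_+$ at large scales via the barriers, and then invoke Theorem \ref{thm flatness improv} repeatedly.

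\textbf{Step 3: conclude by contradiction.} If \eqref{eq flatness} failed along a sequence $\epsilon_k\to0$, the rescalings $w_{\epsilon_k}$ would violate $|w_{\epsilon_k}-(x_n)_+|\le \tfrac12$ in $B_2$. But by Steps 1 and 2 every subsequence converges to $(x_n)_+$, which is a contradiction.

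The main obstacle is Step 2, the classification of blow-ups. Showing that the limit is exactly linear with slope $f(e_n)$ requires both the barrier arguments at the hyperplane and a rigidity result. I would try first to get two-sided trapping by $(x_n\pm\epsilon)_+$ at some scale using the sliding test functions from the proof of Lemma \ref{Lemma SemiHarnack}, and then let Theorem \ref{thm flatness improv} finish.
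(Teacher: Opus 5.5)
There is a genuine gap, and it is where you suspect. Step 2 is the whole content of the proposition, and none of the tools you list for it works as stated.

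\begin{itemize}
\item Theorem \ref{thm flatness improv} requires as input that the solution is already $\epsilon_0$-flat in a ball, i.e.\ trapped between $(x_n-\epsilon_0)_+$ and $(x_n+\epsilon_0)_+$. Producing that trapping for $w_0$ is exactly what has to be proved. Iterating the theorem ``at large scales'', or after trapping ``via the barriers'', is therefore circular.
\item The sublinearity of $w_0-x_n$ is asserted without reason; the Lipschitz bound only gives linear growth. The paper has no Liouville theorem for this degenerate equation.
\item You treat $w_0$ as a solution in the whole half-space vanishing on $\{x_n=0\}$. The exterior ball only gives $\{w_0>0\}\subset\{x_n>0\}$, so $FB(w_0)$ may enter the open half-space.
\item At points of $\{x_n=0\}$ you can only test from the zero side, i.e.\ from above by $f(\nu)d-\epsilon d^{1+\mu}$. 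Touching from below needs an interior tangent ball. This gives at most a one-sided slope bound, not $\liminf=\limsup=f(e_n)$.
\item Remark \ref{rmrk convergence for fixed right-hand side} only gives the interior equation for the limit. It also relies on a non-degeneracy bound that the paper proves only for Perron solutions. So the free boundary condition for $w_0$, which Step 2 uses, is not established.
\end{itemize}

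The non-degeneracy issue cannot be set aside. If a blow-up vanished identically, the lower bound $(x_n-\epsilon)_+\le w$ in $B_{2\epsilon}$ would fail. ``Restricting to $w_0\not\equiv 0$'' therefore discards a case the proposition must rule out.

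Your sketch for $w(te_n)\ge ct$ does not get started. The bump-subsolution argument in the proof of Lemma \ref{Lemma Harnack Inequality} shows that a solution positive on $B_s(x)$ satisfies $w(x)\ge cs$. Hence $w(te_n)=o(t)$ forces zeros of $w$ within $o(t)$ of $te_n$, and there is no ball of size comparable to $t$ in $\{w>0\}$ on which to apply Harnack. Even where it applies, for $w_\rho$ that inequality reads $\sup w_\rho\le C(\inf w_\rho+1)$, so smallness at one point gives no smallness elsewhere. Sliding $f(\nu)d-\epsilon d^{1+\mu}$ to first contact also needs $w$ below the barrier on a whole neighborhood of the sphere, not at one point per scale.

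For comparison, the paper never classifies blow-ups; it works at the single scale $\epsilon$.
\begin{itemize}
\item The upper bound comes from Proposition \ref{prop optimal growth}, compared with the distance to $B_r(-re_n)$. Read literally, this needs that growth constant to be at most about $3/2$.
\item The lower bound is by contradiction through the sliding-barrier argument of Lemma \ref{Lemma SemiHarnack}. That argument would push $w$ below $x_n+\epsilon-\delta$ near $0$ with $\delta$ independent of $\epsilon$, which contradicts $0\in FB(w)$.
\end{itemize}

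If you keep the blow-up route, you still need a real rigidity result for Lipschitz solutions vanishing on $\{x_n\le 0\}$. One usable fact is that $\alpha (x_n)_+$ solves the interior equation only when $\alpha=f(e_n)$, since $h(\alpha e_n)\neq 0$ otherwise.
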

\begin{proof}
For the estimate from above, we simply recall that 
\begin{equation*}
w(x) \leq Cdist(x, FB(w)) \leq Cdist(x, B_{r}(-re_n))
\end{equation*}
by the optimal regularity estimate in Proposition  \ref{prop optimal growth}. Then, it may be checked by a direct computation that
\begin{equation*}
Cdist(x, B_{r}(-re_n)) \leq (x_n + \epsilon)_{+} \qquad \text{ in $B_{2\epsilon}$}
\end{equation*}
for $\epsilon$ sufficiently small.

As for the estimate from below, suppose such estimate is not true by contradiction. Then, for any fixed $\epsilon > 0$,  there exists a sequence of points $\{x_j\}$ such that $w(x_j) \leq (g(x_j) - \epsilon)_{+}$, where $g(x) = x_n$. We may then apply the reasoning found in the proof of the Semi-Harnack Inequality Lemma \ref{Lemma SemiHarnack} to conclude that in fact 
\begin{equation*}
w(x) \leq (x_n + \epsilon - \delta),
\end{equation*}
where $\delta \leq c$ is universal and independent of $\epsilon$. Taking $\epsilon$ small enough then contradicts $0 \in FB(w)$.
\end{proof}

\section{Connection with minimal surfaces}\label{sec minimal surface}
The purpose of this section is to present the proof of Theorem \ref{thm minimal surfaces}. First, we show that any function that solves the interior equation \eqref{eq interior} in its positivity set already satisfies a growth estimate near the free boundary. Note that this applies to functions that do not satisfy a free boundary condition; this is roughly the multi-dimensional analogue of the fact that the first term in the expansion of a one-dimensional solution must be linear. We will use this estimate when treating the limiting equation in Theorem \ref{thm minimal surfaces}.
\begin{lemma}\label{Lemma Growth Free Boundary}
Let $w \geq 0$ be a solution to \eqref{eq interior} in $B_{1}^{+}$. Furthermore, suppose that $e_n \in \partial D$, that $0 \in \partial\{w >0\}$, and that there is some $r > 0$ and a ball $B_{r}(re_n) \subset \{w > 0\}$ such that $FB(w)$ is tangent to $B_{r}(re_n)$. Then, it follows that near the origin,
\begin{equation}\label{eq flatness}
w(x) \geq x_n + o(|x_n|). 
\end{equation}
\end{lemma}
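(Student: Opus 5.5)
The plan is a barrier argument inside the tangent ball $B' := B_r(re_n)$. The lower bound will come from comparing $w$ with subsolutions that are linear with slope $f(e_n)=1$ at the tangency point; no free boundary condition is needed. Set $d(x)=\dist(x,\partial B')$ in $B'$ and $\nu(x)=\nabla d(x)$.

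\textbf{Step 1 (positivity away from the origin).} On a concentric ball $B'' = B_{r/2}(re_n)$, continuity and positivity of $w$ on $\overline{B''}$ give $w \geq m>0$. By Lemma \ref{Lemma Harnack Inequality} with $\sigma=0$, applied on balls inside $\{w>0\}$, we also get quantitative positive lower bounds on compact subsets of $B'$.

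\textbf{Step 2 (family of subsolutions).} For a small parameter $\delta>0$, I would use
\begin{equation*}
u_\delta(x) = (1-\delta) f(\nu(x)) d(x) - K d(x)^{1+\mu} .
\end{equation*}
The factor $(1-\delta)$ makes $|\nabla u_\delta|$ lie strictly inside $D$ near $\partial B'$, since $f(\nu)\nu\in\partial D$ and $D$ is star-shaped. Hence $h(\nabla u_\delta)\le -c(\delta)<0$ there, so $h(\nabla u_\delta)/u_\delta \le -c(\delta)/u_\delta$, which tends to $-\infty$ as $d\to 0$. On the other hand, $\Delta u_\delta = O(1) - K\mu(1+\mu)d^{\mu-1}$, so
\begin{equation*}
u_\delta\,\Delta u_\delta \geq -C d - CK d^{\mu} \ge -c(\delta)/2 \ge h(\nabla u_\delta)
\end{equation*}
in a thin annulus $\{d\le \tau(\delta)\}$. (Even $K=0$ works: we only need $u\Delta u\ge h(\nabla u)$, and $u\Delta u$ is small near $\partial B'$ while $h(\nabla u)\le -c(\delta)$.) So $u_\delta$ is a subsolution of \eqref{eq interior} in $A_\delta = B'\cap\{d<\tau(\delta)\}$.

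\textbf{Step 3 (comparison).} After scaling down by a constant factor $t\,u_\delta$ (which keeps the gradient inside $D$), arrange $u_\delta\le w$ on the inner boundary $\{d=\tau(\delta)\}$ using Step 1. On $\partial B'$ both functions are nonnegative and $u_\delta=0$. It is cleaner to avoid the scalar factor: use the sliding method as in Proposition \ref{prop optimal growth}. Consider balls $B_{r-s}(re_n)$ with distance functions $d_s$ and shrink $s$ from a large value to $0$. If the first contact with $w$ occurred at an interior point of the annulus, where $w>0$ and $w$ is a viscosity supersolution, the strict subsolution property gives a contradiction. Contact on $\partial B_{r-s}$ is impossible since $w>0$ in $B'$. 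This yields $w\ge (1-\delta)f(\nu)d$ near the origin. Near $0$ we have $d(x)=x_n+O(|x|^2)$ and $f(\nu(x))=1+O(|x'|)$. Along the normal direction this gives $w(x)\ge (1-\delta)x_n - C|x|^2$, and letting $\delta\to0$ gives $w\ge x_n+o(x_n)$.

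\textbf{Main obstacle.} The hardest point is the dependence of the annulus width $\tau(\delta)$ and the inner-boundary comparison on $\delta$. As $\delta\to 0$ the annulus shrinks, so for each $\delta$ I need $w\ge (1-\delta)f d$ on $\{d=\tau(\delta)\}$, where $w$ has a positive lower bound but $f d\approx\tau(\delta)$ is small. I would first try to shrink $\tau(\delta)$ until $\tau(\delta)\le m_\delta$, with $m_\delta=\min_{\{d=\tau\}} w$. This is circular, since $m_\delta$ itself depends on $\tau$. To break the circularity, I would first prove the weaker bound $w\ge c\,d$ using the bump subsolution from Lemma \ref{Lemma Harnack Inequality} on balls tangent from inside. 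With that bound, the inner-boundary comparison can be rescaled via the invariance \eqref{eq rescaling}, iterating the argument dyadically toward the origin.
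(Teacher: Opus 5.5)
\textbf{Gap in Step 3.} Your Step 2 is fine: $(1-\delta)f(\nu)d$ is a strict subsolution in a thin annulus, because its gradient stays in a compact subset of $D$ where $h\le -c(\delta)$, while $u\Delta u\to 0$. Step 3, however, has a genuine gap, exactly where you suspected. You never establish $u_\delta\le w$ on the inner boundary $\{d=\tau(\delta)\}$. Since you shrink $\tau$ until the $K$-term is negligible, you have $u_\delta\approx(1-\delta)f(\nu)\tau$ there. The only lower bound you actually have is the bump bound $w\ge s\,\dist(x,\partial B')$ with $s$ small, i.e.\ $w\ge s\tau$.

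The proposed dyadic iteration does not break this circularity. The configuration is invariant under \eqref{eq rescaling}, the bump bound is scale invariant, and the admissible width for $(1-\delta)fd$ scales like $c(\delta)r$. So at every scale you face the identical requirement $w\ge(1-\delta)f\tau$ on $\{d=\tau\}$, with only $s\tau$ available. A pure-slope barrier $af(\nu)d$ can only be slid under $w$ if $w\ge af\tau$ is already known on the inner boundary, so iterating never raises the slope beyond what you started with. As written, $w\ge(1-\delta)f(\nu)d$ is not proved.

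\textbf{The missing idea.} Use the correction term in an essential way instead of making it negligible. Fix $\tau\le c(\delta)r$ and $\mu$ small, and choose $K$ (allowed to depend on $\nu$) with $K\tau^{\mu}=(1-\delta)f(\nu)-s$. Then $u=s\tau\le w$ on $\{d=\tau\}$ by the bump bound.

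Write $A=(1-\delta)f(\nu)$ and $y=Kd^{\mu}\in[0,A-s]$. Then $u\,\partial_{dd}u=-\mu(1+\mu)\,y\,(A-y)\ge-\mu(1+\mu)A^{2}/4$, and $\partial_d u=A-(1+\mu)y>0$ for $\mu$ small. Hence $\nabla u$ stays within $O(\tau/r)$ of $(1-\delta)\overline{D}$, a compact subset of $D$ on which $h\le -c(\delta)$. The curvature and tangential contributions to $u\Delta u$ are $O(\tau/r)$. So for $\mu$ and $\tau/r$ small depending on $\delta$, $u$ is a strict subsolution in $\{0<d<\tau\}$. This $\mu$ is not subject to \eqref{eq mu full condition}, since the gradient never reaches $\partial D$.

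Your sliding over $B_{r-\sigma}(re_n)$ then works. Start at $\sigma$ with $s\sigma\ge(1-\delta)\tau\max f\ge\sup u$, where $w\ge s\sigma$ dominates $u$, and decrease $\sigma$. Contact is excluded on $\partial B_{r-\sigma}$, where $u=0<w$, and on the inner boundary, where $u=s\tau<s(\tau+\sigma)\le w$. This gives $w(te_n)\ge(1-\delta)t-Ct^{1+\mu}$, and the lemma follows by letting $\delta\to 0$.

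\textbf{Comparison with the paper.} With this repair your route is a legitimate alternative to the paper's. The paper instead compares $w$ with the zero-data Dirichlet solution $\bar w$ in the tangent ball. It then obtains $\bar w=x_n+o(x_n)$ from the bounds $sx_n\le\bar w\le x_n$ plus a blow-up argument. The barrier version avoids the blow-up/Liouville step altogether.
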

\begin{proof}
First, consider the solution $\bar{w}$ of the interior equation \eqref{eq interior} in $B_{r}(re_n)$ with zero boundary data. It must then be the case that $w \geq \bar{w}$; notice that the solutions of the interior equation \eqref{eq interior} in $B_{r - t}(e_n)$ with zero boundary data form a continuous family of solutions that would otherwise touch $w$ by below at an interior point. Now, we will show that 
\begin{equation*}
\bar{w}(x) = x_n + o(|x_n|)    
\end{equation*}
near the origin. By a Lipschitz rescaling, we may assume that $r = 1$. First, notice that since $e_n \in \partial D$, we have that the functions
\begin{equation*}
\phi_t(x) = (1 + t)x_n
\end{equation*}
are a continuous family of supersolutions to the interior equation \eqref{eq interior}. If $\bar{w} - x_n$ achieves a negative minimum in $B_1$, then some $\phi_t$ will necessarily touch some $\bar{w}$ by below at an interior point. Hence, we can conclude that 
\begin{equation*}
\bar{w} \leq x_n.
\end{equation*}
Additionally, notice that we have that the family of ``bump" subsolutions
\begin{equation*}
\psi_s = s(1 - |x - e_n|^2)
\end{equation*}
for $s$ sufficiently small universal. As before, we may conclude that $\bar{w} \geq \psi_s$ for any $\psi_s$ whose $s$ parameter is small enough so that $\psi_c$ is a subsolution to the interior equation \eqref{eq interior}. This then implies that
\begin{equation*}
\bar{w} \geq sx_n.
\end{equation*}
Now, assume by way of contradiction that there exists a sequence of points $\{x_j\}$ converging to the origin such that,
\begin{equation*}
\frac{\bar{w}(x_j)}{g(x_j)} \rightarrow s_0 < 1,
\end{equation*}
where $g(x) = x_n$. Then, for $r_j = g(x_j)$, consider the rescalings
\begin{equation*}
\bar{w}_j(x) = \frac{\bar{w}(r_j(x-e_n) + x_j)}{r_j}.
\end{equation*}
By the regularity estimates on $\bar{w}$, such sequence of functions will converge to a limiting function $\bar{w}_0$ that is a global solution of \eqref{eq interior} in $\{x_n > 0\}$. Then, since 

\begin{equation*}
\bar{w}_j(e_n) = \frac{\bar w(x_j)}{r_j} \rightarrow c < 1,    
\end{equation*} 
it follows that $\bar{w}_0(e_n) = c$, and thus the subsolution $cx_n$ touches $\bar{w}_0$ by below at $e_n$, a contradiction.
\end{proof}

Before we do so, we recall the definition of what it means to be a minimal surface in the viscosity sense.

\begin{definition}
Let $w$ be a non-negative continuous function in a bounded, open set $\Omega$. We say that a free boundary $\partial\{w > 0\}$ is a minimal surface in the viscosity sense if the following two conditions hold:
\begin{itemize}
    \item No smooth hypersurface $\Gamma$ with positive mean curvature that lies in the positivity set of $w$ may touch $\partial\{w > 0\}$ tangentially.
    \item No smooth hypersurface $\Gamma$ with negative mean curvature that lies in the zero-set of $w$ may touch $\partial\{w > 0\}$ tangentially.    
\end{itemize}
\end{definition}
\begin{remark}\label{Remark Minimal Surfaces Test Functions}
The idea and motivation behind Theorem \ref{thm minimal surfaces} can be traced back to Lemma \ref{Lemma Test Functions}. In such Lemma, we show that test functions of the form 
\begin{equation*}
f(\nu(x))d(x) \pm \epsilon d(x)^{1 + \mu}
\end{equation*}
are subsolutions ($+$) or supersolutions ($-$) of the interior equation \eqref{eq interior} in a neighborhood of the free boundary as long as
\begin{equation*}
\mu > \sup_{\nu \in \mathbb{S}^{n-1}} \frac{\nabla h(f(\nu)\nu) \cdot \nu}{f(\nu)} \quad \text{ and } 1 > \sup_{\nu \in \mathbb{S}^{n-1}} \frac{\nabla h(f(\nu)\nu) \cdot \nu}{f(\nu)}.
\end{equation*}
However, in the limiting case of Theorem \ref{thm minimal surfaces},
\begin{equation*}
\frac{\nabla h(f(\nu)\nu) \cdot \nu}{f(\nu)} \equiv 1.
\end{equation*}
In this case, the mean curvature of the free boundary becomes relevant. Namely, the computation leading to Lemma \ref{Lemma Test Functions} now shows that the test functions
\begin{equation*}
f(\nu(x))d(x) - Md^2(x)
\end{equation*}
are subsolutions of the interior equation \eqref{eq interior} in a tubular neighborhood of width $\frac{c}{M}$ of the free boundary in the parts of the free boundary with positive mean curvature (namely, where $\Delta d > 0$). Likewise, the test functions
\begin{equation*}
f(\nu(x))d(x) + Md^2(x)
\end{equation*}
are supersolutions of the interior equation \eqref{eq interior} in a tubular neighborhood of width $\frac{c}{M}$ of the free boundary in the parts of the free boundary with negative mean curvature (where $\Delta d < 0$). These test functions can then be modified to touch one of the approximating equations, which then implies that the free boundary must be a minimal surface. We develop this notion in the proof below. 
\end{remark}
\begin{proof}[Proof of Theorem \ref{thm minimal surfaces}]
The fact that $w$ satisfies the interior equation \eqref{eq min surf h} in the viscosity sense follows trivially from the stability of viscosity solutions under uniform limits. Regarding the free boundary being a minimal surface, let us suppose that $0 \in FB(w)$. Then, suppose by way of contradiction that there exists an oriented smooth hypersurface $\Gamma$ lying in $\{w > 0\}$ that touches $FB(w)$ tangentially at the origin and such that $\kappa(0) > 0$, where $\kappa$ is the mean curvature of $\Gamma$ at the origin. For $r$ sufficiently small, $\Gamma$ will split $B_{r}$ into two regions; let $A$ denote the region such that $A \subset \{w > 0\}$. Then, let 

\begin{align*}
d(x) = 
\begin{cases}
dist(x, \Gamma) \qquad & \text{ for } x \in A \\
0 \qquad & \text{ otherwise},
\end{cases}
\end{align*}
and let $\nu(x) = \nabla d(x)$. Consider the function
\begin{equation}\label{eq test 1}
v(x) = f(\nu(x))d(x) - Md^2(x).
\end{equation}
By the computation in subsection \ref{sub free boundary}, we can conclude that $v$ is a subsolution to the interior equation \eqref{eq main problem} in the set $\{d \leq \frac{\delta}{M}\}$ for some $\delta \leq c$. On $\{d = \frac{\delta}{M}\}$, we have that
\begin{equation*}
v(x) = \delta[\frac{f(\nu(x)) - \delta}{M}].
\end{equation*}
Using the Growth Lemma \ref{Lemma Growth Free Boundary}, we also have that
\begin{equation*}
w(x) = f(\nu(x))d(x) + o(d(x)) \geq (f(\nu(x) - \delta)d(x) \quad \text{ on $\Omega_{\delta}$},
\end{equation*}
where $\Omega_{\delta}$ is a neighborhood of the origin whose size depends on $\delta$. Setting $M$ large enough then ensures that $\{d \leq \frac{\delta}{M}\} \subset \Omega_{\delta}$, and so we can conclude that $v < w$ in $\{d = \frac{\delta}{M}\}$. Then, if $w - v$ attains a negative minimum in $\{d \leq \frac{\delta}{M}\}$, then a function of the form $v(x - t\eta)$ will touch $w$ by below for some $t > 0$ if we let $\eta$ be the unit normal to $\Gamma$ at the origin that points towards the positivity set of $w$. Thus, $v$ touches $w$ by below at the origin. 

Now, for $\epsilon > 0$ small and any $\mu \in (0,1)$ to be fixed later, consider 
\begin{equation}
u(x) = f(\nu(x))d(x) + \epsilon d(x)^{1 + \mu} - Md^2(x).
\end{equation}
\noindent
For $\epsilon$ sufficiently small and $\mu$ sufficiently close to $1$, there exists some neighborhood $\mathcal{N}$ of the origin and some $w_j$ in the approximating sequence of solutions such that $u < w_j$ on $\partial \mathcal{N} \setminus \Gamma$ and such that $u$ is a subsolution to the interior equation \eqref{eq interior} corresponding to $w_j$. If $0$ is an interior point of $\{w_j = 0\}$, then the translated function $u(x - t\eta)$ will for some choice of $t$ touch $w_j$ by below either at an interior or a free boundary point (where $\eta$ again is the unit normal of $\Gamma$ pointing towards $\{d > 0\}$), providing a contradiction. So, we are left with the possibility that $0 \in FB(w_j)$ or $w_j(0) > 0$. In the former case, either $u$ touches $w_j$ by below at the origin or the same translation as before will touch $w_j$ at an interior or free boundary point. Hence, the only remaining possibility is that $w_j(0) > 0$, so that upon possibly shrinking $\mathcal{N}$ we have that $w_j > w$ on $\{d \leq M\}$. We may then assume without loss of generality that $w_{i} \geq w$ for all $i \in \mathbb{N}$. Since $FB(w_i) \rightarrow FB(w)$ in Hausdorff distance, we can then conclude that a translation of the form $u(x + t\eta)$ will for an appropriate choice of $t$ touch some $w_j$ by below at a free boundary point, yielding a contradiction.  

Using the same reasoning as above, we can show that no smooth oriented hypersurface of mean negative curvature can touch the free boundary from the zero set of $w$, thus proving the result.
\end{proof}

\begin{remark}
The previous theorem already assumes the convergence of $\{w_k\}, \{h_k\}, \partial\{w_k > 0\}$. In the case of the Alt-Phillips equation for negative power potentials, the convergence is guaranteed via variational methods for critical points. Additionally, if the sequence of solutions all have the same right-hand side $h$ and domain function $f$, the convergence can also be guaranteed by Remark \ref{rmrk convergence for fixed right-hand side}. In a subsequent work, we will explore conditions under which uniform convergence of viscosity solutions can be guaranteed in more general settings.

\end{remark}
\bibliographystyle{acm}
\bibliography{References}
\end{document}